\def\th@plain{%
    \thm@notefont{}% same as heading font
    \itshape% body font
}
\def\th@definition{%
    \thm@notefont{}% same as heading font
    \normalfont% body font
}
\newtheorem{theorem}{Theorem}[section]
\newtheorem{lemma}[theorem]{Lemma}
\newtheorem{proposition}[theorem]{Proposition}
\newtheorem{corollary}[theorem]{Corollary}
\newtheorem{notation}[theorem]{Notation}
\newtheorem*{theorem*}{Theorem}
\theoremstyle{remark}
\newtheorem{remark}[theorem]{Remark}
\numberwithin{equation}{section}
\newcommand{\EE}{{\mathbb{E}}}
\newcommand{\NN}{{\mathbb{N}}}
\newcommand{\ZZ}{{\mathbb{Z}}}
\newcommand{\RR}{{\mathbb{R}}}
\newcommand{\QQ}{{\mathbb{Q}}}
\DeclareMathOperator{\rk}{rk}
\DeclareMathOperator{\vol}{vol}
\DeclareMathOperator{\Prob}{Prob}
\newcommand{\diag}{\mathrm{diag}}
\newcommand{\tp}[1]{^\mathrm{t}{#1}}
\newcommand{\SL}{\mathrm{SL}}
\newcommand{\ASL}{\mathrm{ASL}}
\newcommand{\Id}{\mathrm{Id}}
\renewcommand{\d}{\hspace{0.5mm} \mathrm d}
\newcommand{\bm}{\mathbf}
\newcommand{\mychi}{\raisebox{2pt}{$\chi$}}
\newcommand{\vx}{\mathbf x}
\newcommand{\vy}{\mathbf y}
\newcommand{\vw}{\mathbf w}
\newcommand{\vv}{\mathbf v}
\newcommand{\vb}{\mathbf b}
\newcommand{\vl}{\ensuremath{\boldsymbol\ell}}
\providecommand{\ve}{\mathbf{ e}}
\providecommand{\vp}{\mathbf{p}}
\newcommand{\vm}{\mathbf m}
\newcommand{\vn}{\mathbf n}
\newcommand{\vk}{\mathbf k}
\newcommand{\vz}{\mathbf z}
\newcommand{\Mat}{\mathrm{Mat}}
\newcommand{\Siegel}[2]{\mathcal S_{#1}(#2)}
\renewcommand{\varpi}{\pi}%{\clubsuit}
\newcommand{\origin}{\mathbf 0}
\newcommand{\ds}{\diamondsuit}
\definecolor{cmd}{rgb}{1.0, 0.35, 0.21}
\begin{document}

% Title
\title[Affine and congruence point counting]{Higher moment formulae and limiting distributions of lattice points}% F{O{L{D

\author{Mahbub Alam}
\address{\textbf{Mahbub Alam} \\
    Department of Mathematics,
    Uppsala University, Sweden \\
\url{https://sites.google.com/view/mahbubweb}}
\email{mahbub.dta@gmail.com, mahbub.alam@math.uu.se}

\author{Anish Ghosh}
\address{\textbf{Anish Ghosh} \\
School of Mathematics, Tata Institute of Fundamental Research, Homi Bhabha Road, Colaba, Mumbai, India 400005}
\email{ghosh@math.tifr.res.in}

\author{Jiyoung Han}
\address{\textbf{Jiyoung Han} \\ Korea Institute for Advanced Study (KIAS), Seoul, Republic of Korea}
\email{hanjiwind@gmail.com} 

%\subjclass[2010]{11H60, 11P21, 37A45}
% F}O}L}D

% Abstract
\begin{abstract}% F{O{L{D
    We establish higher moment formulae for Siegel transforms on the space of affine unimodular lattices as well as on certain congruence quotients of $\SL_d(\RR)$. As applications, we prove functional central limit theorems for lattice point counting for affine and congruence lattices using the method of moments. 
\end{abstract}% F}O}L}D

\maketitle

\tableofcontents

% Section
\section{Introduction}% F{O{L{D
 Let $X_d$ denote the space of unimodular lattices in $\RR^d$ which can be naturally identified with $\SL_d(\ZZ)\backslash \SL_d(\RR)$ and denote by $\mu$ the Haar measure on $X_d$ normalized to be a probability measure. Let $f:\RR^d \rightarrow \RR$ be a bounded function of compact support. The Siegel transform $\Siegel{1}{f}$ of $f$ is defined by
\[
    \Siegel{1}{f}(\Lambda)=\sum_{\vm \in \Lambda}
    f(\vm),\; \Lambda\in \SL_d(\ZZ)\backslash \SL_d(\RR).
\]
In \cite{Sie45}, Siegel proved that 
$$ \int_{X_d}\Siegel{1}{f}d\mu = \int_{\RR^d}f(x)dx+f(\origin).$$

This result, often referred to as \emph{Siegel's mean value formula} is a fundamental result in the geometry of numbers and has proved to be indispensable in homogeneous dynamics, especially in applications to Diophantine problems. Following Siegel's result, Rogers \cite{Rogers55} established intricate formulae for the higher moments of Siegel transforms (see Theorem \ref{Rogers moment} in Section 2). These formulae have since become an important tool in a wide variety of Diophantine problems. It is of considerable interest to prove analogues of Siegel's and Rogers's formulae for other homogeneous spaces. In this paper, we will establish explicit higher moment formulae for analogues of the Siegel transform on the following two homogeneous spaces, which are equipped with natural invariant probability measures $\mu_Y$ and $\mu_{q}$ on $Y$ and $Y_{\vp/q}$, respectively (see Section~\ref{Section:Higher Moment Formulae}). 
\begin{itemize}
\item The space $Y := \ASL_d(\ZZ)\backslash \ASL_d(\RR)$.
\item The space $Y_{{\vp}/q} := \left\{\left(\ZZ^d+\dfrac {\vp} q\right)g : g\in \SL_d(\RR) \right\}$, 
%%%
where $\vp\in \ZZ^d \smallsetminus \{\origin\}$ and $q\in \NN_{\ge 2}$ with $\gcd(\vp, q)=1$.
%%%
\end{itemize}

There have been many developments since Rogers's work; among those pertinent to the present paper, there is the recent work \cite{Han} of the third named author where $S$-arithmetic versions of Rogers's theorems are established. Analogues of Siegel transforms for $Y$ and $Y_{{\vp}/q}$ have been considered and in fact a second moment formula has been obtained in each case. In the affine case, by El-Baz, Marklof and Vinogradov \cite{EMV2015} where they were used to study the distribution of gaps between lattice directions (see also \cite{Ath}); in the congruence case, by Ghosh, Kelmer and Yu \cite{GKY2020} where they were used to study effective versions of an inhomogeneous version of Oppenheim's conjecture on quadratic forms. In fact, they have other applications as well, we refer the reader to \cite{AGY} for an application of the congruence second moment formula to Diophantine approximation and to \cite{GH} for an $S$-arithmetic version of the congruence second moment formula with applications to quadratic forms. 

The main results in the present paper are formulas computing all the higher moments of Siegel transforms for both the affine and congruence cases. We also obtain analogues of a modification to Rogers's formula, due to Str\"{o}mbergsson and  S\"{o}dergren \cite{StSo}. Our proof of the higher moment formulae owes a lot to the breakthrough work of Marklof and Str\"{o}mbergsson \cite{MS2010}. As will become clear, we make significant use of the ideas in Section $7$ of their paper. Our formulas are explicit but, as is the case with Rogers's original formula, are heavy on notation and need some buildup to state. We therefore postpone stating them to the next section. The reader will find the higher moment formula for Siegel transforms on $Y$ in Theorem \ref{higher moment formula: affine}, and the formula for Siegel transforms on $Y_{{\vp}/q}$ in Theorem \ref{higher moment formula}.  If history is a reliable guide, then our higher moment formulae will find good uses in counting problems. In the present paper, we provide applications to limiting distributions in lattice point counting problems. We devote the remainder of the introduction to discussing these applications.

\subsection{Applications to counting results}
Our counting results are inspired by the work of Str\"{o}mbergsson and  S\"{o}dergren \cite{StSo}. Given $d \geq 2$, a lattice $L \in X_d$ and a real number $x \geq 0$, set
$$ N_{d, L}(x):= \#\left\{m \in L \backslash \{0\}~:~|m| \leq \left(\frac{x}{V_d}\right)^{1/d}\right\}, $$
where $V_d$ denotes the volume of the unit ball in $\RR^d$. Further let 
$$R_{d, L}(x) := N_{d, L}(x) - x $$
be the error term in the Gauss circle problem. Str\"{o}mbergsson and  S\"{o}dergren proved several interesting results regarding the behaviour of $R_{n, L}$ including the following central limit theorem for a random lattice $L$. 
\begin{theorem*}[Str\"{o}mbergsson and  S\"{o}dergren \cite{StSo}]\label{S-S}
Let $\phi : \ZZ_{+} \to \RR_{+}$ be any function satisfying $\lim_{n \to \infty} \phi(d) = \infty$ and $\phi(d) = O_{\varepsilon}(e^{\varepsilon d})$ for every $\varepsilon > 0$. Let $Z^{(B)}_d$ be the random variable
\[
    Z^{(B)}_d := \frac{1}{\sqrt{2\phi(d)}}R_{d, L}(\phi(d))
\]
with $L$ picked at random in $(X_d, \mu)$.
Then
\[
    Z^{(B)}_d \rightarrow \mathcal N(0,1) \text{ as } d \to \infty
\]
in distribution.
\end{theorem*}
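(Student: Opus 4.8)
The plan is to prove the theorem by the method of moments, in the spirit of \cite{StSo}. Set $x:=\phi(d)$ and $r:=(x/V_d)^{1/d}$, so that the Euclidean ball $B_r\subset\RR^d$ of radius $r$ has $\vol(B_r)=V_d r^{d}=x$. Then $N_{d,L}(x)$ is the value at $L$ of the Siegel transform $\widehat{\one_{B_r}}(L):=\sum_{v\in L\setminus\{0\}}\one_{B_r}(v)$ of the even, bounded, compactly supported function $\one_{B_r}$, and Siegel's mean value formula gives $\int_{\mathcal X_d}\widehat{\one_{B_r}}\,d\mu=\vol(B_r)=x$. Thus $R_{d,L}(x)=\widehat{\one_{B_r}}(L)-x$ is already centred, $\int_{\mathcal X_d}Z^{(B)}_d\,d\mu=0$, and it suffices to prove that for every fixed $k\in\NN$
\[
\int_{\mathcal X_d}\bigl(Z^{(B)}_d\bigr)^{k}\,d\mu
=(2x)^{-k/2}\int_{\mathcal X_d}\bigl(\widehat{\one_{B_r}}-x\bigr)^{k}\,d\mu
\;\xrightarrow[d\to\infty]{}\;
\begin{cases}(k-1)!!,& k\text{ even},\\ 0,& k\text{ odd},\end{cases}
\]
with $x=\phi(d)$; since the Gaussian law is determined by its moments, the method of moments then yields $Z^{(B)}_d\to\mathcal N(0,1)$ in distribution.

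To evaluate these moments I would expand $\bigl(\widehat{\one_{B_r}}-x\bigr)^{k}$ by the binomial theorem and apply the higher moment formulae of Rogers to each $\int_{\mathcal X_d}\bigl(\widehat{\one_{B_r}}\bigr)^{j}\,d\mu$ (these hold once $d>j$, hence for all large $d$ since $j\le k$). Rogers' formula expresses $\int_{\mathcal X_d}\bigl(\widehat{\one_{B_r}}\bigr)^{j}\,d\mu$ as a finite sum, over ranks $1\le\rho\le j$ and over rank-$\rho$ matrices $M\in\Mat_{j\times\rho}(\ZZ)$ (taken modulo $\GL_\rho(\ZZ)$, subject to coprimality constraints) that record the $\ZZ$-linear dependencies among $v_1,\dots,v_j$, of terms $c_{M}\int_{(\RR^d)^{\rho}}\prod_{i=1}^{j}\one_{B_r}\bigl(\sum_{s=1}^{\rho}M_{is}x_s\bigr)\,dx_1\cdots dx_{\rho}$, where $c_M$ is a product of reciprocals of zeta-values $\zeta(d),\zeta(d-1),\dots$. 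Two structural facts then drive the analysis. First, any term in which some $v_i$ is linearly independent of all the remaining vectors factors off a clean $\int_{\RR^d}\one_{B_r}=x$, and summing such terms across the binomial expansion produces an exact cancellation against the powers of $x$; so only configurations in which every vector lies in a nontrivial dependence with some other survive. Second, among those survivors the top power of $x$ is attained exactly when the $k$ vectors split into $k/2$ mutually linearly independent \emph{antipodal} pairs $\{v,-v\}$: there are $(k-1)!!$ such pairings, and, exactly as in the second-moment identity
\[
\int_{\mathcal X_d}\bigl(\widehat{\one_{B_r}}\bigr)^{2}\,d\mu-x^{2}
=\frac{2x\,\bigl(2\zeta(d-1)-\zeta(d)\bigr)}{\zeta(d)}
=2x\,\bigl(1+O(2^{-d})\bigr),
\]
each such pair contributes a factor $2x\,(1+o(1))$. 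This gives $\int_{\mathcal X_d}\bigl(\widehat{\one_{B_r}}-x\bigr)^{k}\,d\mu=(2x)^{k/2}(k-1)!!\,(1+o(1))$ for $k$ even, whereas for $k$ odd no perfect pairing is possible and the leading surviving configurations --- a colinear triple together with $(k-3)/2$ antipodal pairs --- give $\int_{\mathcal X_d}\bigl(\widehat{\one_{B_r}}-x\bigr)^{k}\,d\mu=O_k\bigl(x^{(k-1)/2}\bigr)$. Dividing by $(2x)^{k/2}$ and using $x=\phi(d)\to\infty$ --- which removes the $O(x^{-1})$ correction in the even case and makes the odd moments $O(x^{-1/2})$ --- produces precisely the moments of $\mathcal N(0,1)$.

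The technical heart, and the step I expect to be the main obstacle, is to prove that all the non-dominant terms in Rogers' expansion are genuinely of smaller order, \emph{uniformly} in $d$ as $x=\phi(d)$ grows; this is exactly where the hypothesis $\phi(d)=O_\varepsilon(e^{\varepsilon d})$ is used. Two ingredients are needed. First, uniform control of the coefficients: the relevant finite products $\prod_{0\le i<k}\zeta(d-i)^{-1}$ equal $1+O(2^{-d})$ and stay bounded for $d$ large, so they contribute only harmless factors. Second, a volume estimate: a configuration carrying an extra $\ZZ$-linear relation among three or more of the vectors (beyond the antipodal pairings) has its attached integral bounded, by concentration of measure for the high-dimensional ball $B_r$ (two independent uniform points of $B_r$ have a sum of norm $\approx r\sqrt2>r$ with overwhelming probability), by $e^{-cd}$ times a product of volumes $\vol(B_r)=x$ for some absolute $c>0$; similarly a non-antipodal parallel relation $v=(p/q)w$ with $|p|+|q|\ge 3$ produces a factor at most $2^{-d}$. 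Since $\phi(d)\le e^{\varepsilon d}$, choosing $\varepsilon<c$ makes these exponential savings dominate every bounded power of $x$, so each such term is $o(x^{k/2})$. Combining the two ingredients with the binomial cancellation above completes the proof. (The hypothesis $\phi(d)\to\infty$ is equally essential: the count $\tfrac12\widehat{\one_{B_r}}$ of antipodal pairs inside $B_r$ is, for large $d$, asymptotically Poisson with parameter $\tfrac{x}{2}\to\infty$, and it enters its Gaussian regime precisely because $\phi(d)\to\infty$; for bounded $\phi$ the limiting distribution would be a shifted Poisson rather than a normal.)
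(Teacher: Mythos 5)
Your proposal is correct and takes essentially the same route as the source: the method of moments, Rogers's higher moment formulae applied to the centred function $\one_{B_r}-\vol(B_r)$, cancellation of all terms in which some vector is linearly independent of the rest, identification of the dominant contribution with the $(k-1)!!$ pairings into antipodal pairs (each worth $2\phi(d)(1+o(1))$), and the Rogers--Schmidt--S\"odergren exponential estimates combined with $\phi(d)=O_\varepsilon(e^{\varepsilon d})$ to kill the remainder. This is precisely the argument of \cite{StSo} that the present paper adapts in Sections 4--5 (Theorems~\ref{New Rogers: affine}--\ref{New Rogers: congruence} and Proposition~\ref{SS Proposition 4.1}) to the affine and congruence settings, so no further comparison is needed.
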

Earlier, S\"{o}degren \cite{Sod} studied the distribution of lengths of lattice vectors in a random lattice of large dimension. Str\"{o}mbergsson and  S\"{o}dergren used the central limit theorem above in conjunction with S\"{o}degren's theorem, to establish the following theorem indicating Poissonian behaviour for sequences growing sub-exponentially with respect to the dimension.
\begin{theorem*}[Str\"{o}mbergsson and  S\"{o}dergren \cite{StSo}]
Let $\phi : \ZZ_{+} \to \RR_{+}$ be any function satisfying $\lim_{n \to \infty} \phi(d) = \infty$ and
%%%
$\phi(d) = O_{\varepsilon}(e^{\varepsilon d})$
%%%
for every $\varepsilon > 0$. Let $\mathcal{N}(x)$ be a Poisson distributed random variable with expectation $x/2$. Then
$$\Prob_{\mu}(N_{d, L}(x) \leq 2N) - \Prob(\mathcal{N}(x) \leq N) \to 0 \text{ as } d \to \infty,  $$
uniformly with respect to all $N, x \geq 0$ satisfying $\min(x,N) \leq \phi(d)$.
\end{theorem*}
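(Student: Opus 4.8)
The plan is to combine the two qualitative ingredients already on the table: the central limit theorem stated above, which governs the fluctuations $R_{d,L}(t)$ when $t\to\infty$, and S\"odergren's theorem on the lengths of short vectors of a random lattice of large dimension, which in the present normalization says that for every fixed $x\ge0$ one has $\tfrac12 N_{d,L}(x)\Rightarrow\mathcal N(x)$ as $d\to\infty$ (the number of antipodal pairs $\{\pm m\}\subset L$ with $V_d|m|^d\le x$ is asymptotically Poisson of mean $x/2$). Before combining them I would record two elementary facts: first, nonzero lattice vectors come in antipodal pairs, so $N_{d,L}(x)$ is always even and $\Prob_\mu(N_{d,L}(x)\le 2N)=\Prob_\mu\big(\tfrac12 N_{d,L}(x)\le N\big)$; second, $N\mapsto\Prob_\mu(N_{d,L}(x)\le 2N)$, $N\mapsto\Prob(\mathcal N(x)\le N)$ and $x\mapsto N_{d,L}(x)$ are all nondecreasing, which will let me control moving thresholds by sandwiching. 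A third, conceptual, remark is that the variance $2t$ occurring in the central limit theorem is exactly the variance of twice a Poisson variable of mean $t/2$; this is what will make the Poisson picture (valid for bounded $x$) and the Gaussian picture (valid for $x\to\infty$) agree on their overlap.

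I would then argue by contradiction: if the statement failed there would be $\delta>0$, $d_j\to\infty$ and pairs $(x_j,N_j)$ with $\min(x_j,N_j)\le\phi(d_j)$ along which the difference of the two probabilities stays $\ge\delta$, and after passing to a subsequence I may assume $x_j\to x_\infty\in[0,\infty]$ and $N_j\to N_\infty\in\{0,1,\dots,\infty\}$. If $x_\infty<\infty$, then for each $\varepsilon>0$, monotonicity in $x$ together with S\"odergren's theorem (and the fact that weak convergence of integer-valued variables implies convergence of distribution functions at every integer) gives, for $N_\infty$ finite,
\[
\Prob(\mathcal N(x_\infty+\varepsilon)\le N_\infty)\ \le\ \liminf_j\Prob_\mu\big(\tfrac12 N_{d_j,L}(x_j)\le N_j\big)\ \le\ \limsup_j\Prob_\mu\big(\tfrac12 N_{d_j,L}(x_j)\le N_j\big)\ \le\ \Prob(\mathcal N(x_\infty-\varepsilon)\le N_\infty),
\]
so letting $\varepsilon\to0$ and using that $x\mapsto\Prob(\mathcal N(x)\le N_\infty)$ is continuous pins the first probability down to $\Prob(\mathcal N(x_\infty)\le N_\infty)$, which is also the limit of $\Prob(\mathcal N(x_j)\le N_j)$, contradicting $\ge\delta$ (the case $N_\infty=\infty$ being handled the same way, both probabilities tending to $1$ by tightness).

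For the remaining possibility $x_\infty=\infty$ I would split once more. If $x_j\le 4\phi(d_j)$ along a further subsequence, then $x_j\to\infty$ and $x_j=O_\varepsilon(e^{\varepsilon d})$, so the central limit theorem applies with $t=x_j$ and $Z_j:=R_{d_j,L}(x_j)/\sqrt{2x_j}\Rightarrow\mathcal N(0,1)$; with $t_j:=(2N_j-x_j)/\sqrt{2x_j}$ and (after a further subsequence) $t_j\to t_\infty\in[-\infty,\infty]$, P\'olya's theorem gives $\Prob_\mu\big(\tfrac12 N_{d_j,L}(x_j)\le N_j\big)=\Prob_\mu(Z_j\le t_j)\to\Phi(t_\infty)$, while $\mathcal N(x_j)$ is Poisson of mean $x_j/2\to\infty$, so $(\mathcal N(x_j)-x_j/2)/\sqrt{x_j/2}\Rightarrow\mathcal N(0,1)$ and, since $(N_j-x_j/2)/\sqrt{x_j/2}=t_j$, also $\Prob(\mathcal N(x_j)\le N_j)\to\Phi(t_\infty)$ — the same limit, a contradiction. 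If instead $x_j>4\phi(d_j)$ eventually, the constraint $\min(x_j,N_j)\le\phi(d_j)$ forces $N_j\le\phi(d_j)$, hence $2N_j\le2\phi(d_j)<\tfrac12 x_j$, and monotonicity in $x$ followed by the central limit theorem applied with $t=4\phi(d)$ gives
\[
\Prob_\mu\big(\tfrac12 N_{d_j,L}(x_j)\le N_j\big)\ \le\ \Prob_\mu\big(N_{d_j,L}(4\phi(d_j))\le 2\phi(d_j)\big)\ =\ \Prob_\mu\!\left(\frac{R_{d_j,L}(4\phi(d_j))}{\sqrt{8\phi(d_j)}}\le-\sqrt{\tfrac{\phi(d_j)}{2}}\right)\ \longrightarrow\ 0
\]
because the threshold tends to $-\infty$; and on the Poisson side $\Prob(\mathcal N(x_j)\le N_j)\le\Prob(\mathcal N(x_j)\le\tfrac14 x_j)\to0$ by the Chernoff bound for the Poisson law, since its mean $x_j/2\to\infty$. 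Both probabilities vanish, the final contradiction.

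The main obstacle is not any single estimate — monotonicity, P\'olya's theorem, and the Chernoff bound are routine — but rather having the central limit theorem usable \emph{as an input over the whole admissible range} $t\in[\,1,O_\varepsilon(e^{\varepsilon d})\,]$, uniformly, rather than for one fixed function $\phi$; this is exactly what the higher moment formulae of this paper are built to supply, since matching every moment of $R_{d,L}(t)/\sqrt{2t}$ with that of $\mathcal N(0,1)$ yields the needed convergence independently of how $t$ is chosen. The one genuinely structural point, already flagged above, is that the Poisson description for bounded $x$ and the Gaussian description for $x\to\infty$ are compatible: a Poisson law of large mean $\lambda$ is asymptotically $\mathcal N(\lambda,\lambda)$, and the Gaussian fluctuation of $N_{d,L}(x)$ has variance $2x=4\cdot\tfrac x2$, precisely the variance of twice a Poisson variable of mean $x/2$.
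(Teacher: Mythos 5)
This statement is quoted in the introduction as background from Str\"ombergsson--S\"odergren \cite{StSo}; the present paper does not prove it, so there is no in-paper argument to compare against. Judged on its own, your proposal is correct and follows what is essentially the intended route: reduce to $\tfrac12 N_{d,L}(x)$ via the antipodal pairing, use S\"odergren's Poisson limit on the bounded-$x$ regime, the central limit theorem (together with the normal approximation to a Poisson of large mean) on the regime $x\to\infty$ with $x\ll\phi(d)$, and show both probabilities vanish when $x$ is much larger than $\phi(d)$, gluing the cases together by a subsequence/contradiction argument and monotonicity. The variance bookkeeping ($2x=4\cdot\tfrac{x}{2}$) and the identity $t_j=(2N_j-x_j)/\sqrt{2x_j}=(N_j-x_j/2)/\sqrt{x_j/2}$ are exactly the points that make the two regimes match, and you verify them correctly. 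The one step worth spelling out more explicitly is the passage from the CLT ``for every admissible function $\phi$'' to its use along a $d$-dependent sequence $x_{j}$: this is handled by defining an auxiliary function $\tilde\phi$ with $\tilde\phi(d_j)=x_j$ (which satisfies the growth hypotheses since $x_j\le 4\phi(d_j)$ and $x_j\to\infty$) and invoking the theorem for $\tilde\phi$; you flag this issue but it deserves the one-line construction rather than an appeal to the moment formulae. A second cosmetic point: since $N$ ranges over nonnegative reals, one should first replace $N$ by $\lfloor N\rfloor$ (legitimate because $N_{d,L}(x)$ is even and both events depend only on $\lfloor N\rfloor$) before extracting convergent integer subsequences.
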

More generally, they considered the case of several pairwise disjoint subsets and studied the joint distribution of the normalized counting variables and obtained a functional central limit theorem. 

In this paper we are concerned with two natural variations on this theme. Namely, we will consider the lattice point counting problem where the lattice is chosen at random from the spaces $(Y, \mu_Y)$ and $(Y_{{\vp}/q}, \mu_q)$.

We refer to these as the \emph{affine} lattice point counting problem and the \emph{congruence} lattice point counting problem respectively. We prove analogues of the results of Str\"{o}mbergsson and  S\"{o}dergren in the affine and congruence setting, and also analogues of results of Rogers \cite{Rogers56-2}, Schmidt \cite{Sch59} and S\"{o}dergren \cite{Sod} on Poissonian behaviour of lengths of lattice vectors in a randomly chosen lattice. See also related work of Kim \cite{Kim}. The main tool in \cite{StSo} is a version of Rogers's formula; in fact, one needs all moments, not just the second moment. In an analogous fashion, Theorems \ref{higher moment formula: affine} and \ref{higher moment formula} will play a starring role in the proofs of the results stated below.
%The main technical point of the present paper, which we believe will have further applications, is the calculation of higher moments for Siegel transforms on the analogous homogeneous spaces $Y$ and $Y_{{\vp}/q}$.  

\subsection{Counting Results}
Our first two results are analogues of S\"{o}dergren's results \cite{Sod} in the affine and congruence setting respectively. For each $d \geq 2$, let 
%%%
$\mathcal{S}=\mathcal S_d = \{S_t: t \geq 0\}$
%%%
be an increasing family of subsets of $\mathbb{R}^{d}$ with $\vol(S_t) = t$, and for $\Lambda \in  Y=\ASL_d(\ZZ)\setminus \ASL_d(\RR)$, set
\[
    N_t(\Lambda) := \#{\left(S_t \cap \Lambda\right)}.
\]
Denote by $\{N^\lambda(t) : t \geq 0\}$ a Poisson process on the non-negative real line with intensity $\lambda$.

% StatementofTheorem
\begin{theorem}[]\label{thmppmainaffine}% F{O{L{D
    The stochastic process $\{N_t(\Lambda) : t \geq 0\}$ converges weakly to $\{N^1(t) : t \geq 0\}$
%%%
as $d$ goes to infinity.
%%%
\end{theorem}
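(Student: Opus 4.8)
The plan is to use the method of moments for point processes: to show that $\{N_t(\Lambda)\}$ converges weakly to a Poisson process $\{N^1(t)\}$ of unit intensity, it suffices to check that for every finite collection of disjoint bounded Borel sets $A_1,\dots,A_k \subset \RR^d$ with $\vol(A_i) = t_i$, the joint factorial moments of the counting variables $\big(\#(A_i \cap \Lambda)\big)_{i=1}^k$ converge, as $d \to \infty$, to those of independent Poisson variables with means $t_i$; equivalently, that the mixed moments $\EE\big[\prod_i \#(A_i \cap \Lambda)^{r_i}\big]$ converge to the corresponding moments of $\prod_i \mathrm{Pois}(t_i)$. One must also verify a tightness/finite-dimensional-distributions reduction for stochastic processes on $[0,\infty)$, but since the limit process has independent increments and continuous (deterministic) intensity, convergence of all finite-dimensional distributions together with the standard criterion for càdlàg counting processes suffices; this is exactly the scheme carried out in \cite{StSo} in the $\SL_d$ setting, and I would follow it closely.

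The core input is the higher moment formula for the Siegel transform on $Y = \ASL_d(\ZZ)\backslash\ASL_d(\RR)$, which by the paper's own statement is established earlier in the text. Concretely, writing $\widehat f(\Lambda) = \sum_{v \in \Lambda} f(v)$ for the Siegel transform of a bounded compactly supported $f$, one has $\int_Y \widehat f \, d\mu = \int_{\RR^d} f$, and more generally the $n$-th moment $\int_Y \widehat f^{\, n} d\mu$ decomposes as a sum over the ways the $n$ arguments can coincide or be linearly related, with the affine structure contributing the key feature that, unlike the linear case, there is no ``$0 \in \Lambda$'' term and translated copies behave genuinely independently. First I would apply this with $f = \one_{A_1} + \dots + \one_{A_k}$ (or, for mixed moments, track the multi-indices), expand, and isolate the ``diagonal'' contribution $\prod_i t_i$; then I would show that every non-diagonal term — corresponding to proper linear/affine dependencies among the chosen lattice points — is $o(1)$ as $d \to \infty$. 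This last estimate is where the volume-of-ball asymptotics enter: the relevant subspaces have volume densities that decay super-exponentially relative to the main term once $d$ is large, exactly as the factors $V_d^{-1}$ and the geometry of $S_t$ force, so each error term carries a factor like $V_{d'}/V_d^{\,c}$ that tends to $0$.

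The main obstacle I expect is the uniform control of the error (non-diagonal) terms in the moment expansion: the number of combinatorial types of affine dependencies among $n$ points grows with $n$, and one needs the per-term decay to beat this growth uniformly, so that for each fixed $n$ the $n$-th moment converges and the Poisson moment-problem (which is determinate) can be invoked. Handling this requires a careful bookkeeping of the moment formula's summands by the rank of the associated sublattice/subspace and an integral estimate showing $\int (\one_{A})^{\otimes} $-type contributions over lower-dimensional strata vanish; the affine setting should in fact be somewhat cleaner than the linear one because the ``primitive vector'' subtleties and the distinguished origin are absent, but one must still verify that the moment formula's geometric constants (covolumes of the relevant sublattices, arising from $\ASL_d(\ZZ)$-orbits) do not conspire to cancel the decay. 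Once the $n$-th moments are shown to converge to the moments of $N^1$ for all $n$ and all finite disjoint families, weak convergence of the process follows from the standard Poisson limit theorem for point processes, completing the proof; the congruence case (Theorem for $Y_{\vp/q}$, if stated) would run in parallel using the corresponding moment formula on $Y_{\vp/q}$.
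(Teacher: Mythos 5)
Your high-level strategy (moments via the higher moment formula on $Y$, main term plus exponentially small remainder, then finite-dimensional convergence plus a standard weak-convergence criterion for counting processes) is the paper's strategy. But there is a genuine gap in the central step. You assert that after expanding the moment one can ``isolate the diagonal contribution $\prod_i t_i$'' and that ``every non-diagonal term \dots is $o(1)$ as $d\to\infty$.'' For the ordinary mixed moments you say you would compute, this is false: the terms of the affine moment formula in which several of the $k$ lattice points \emph{coincide} --- i.e.\ the matrices $\widetilde{D'}\in\mathfrak M^k$ with $u=1$ and exactly one entry equal to $1$ in each row --- each contribute a product $\prod_i V_{\alpha_i}$ that does \emph{not} vanish, and these surviving terms are precisely what build the non-trivial Poisson moments (recall $\EE[\mathrm{Pois}(t)^2]=t^2+t$, not $t^2$). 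The crux of the paper's proof is the combinatorial matching of these surviving terms with set partitions of $\{1,\dots,k\}$ (Lemmas~\ref{thmsodmainterm}, \ref{lemexppp} and the bijection of Lemma~\ref{lemsodlem3}), and your plan omits it. If you instead intend \emph{factorial} moments over disjoint sets, then the limit $\prod_i t_i^{r_i}$ is correct, but the moment formula of Theorem~\ref{higher moment formula: affine} sums over tuples \emph{with} repetitions, so extracting factorial moments forces you through exactly the same partition bookkeeping; the combinatorial work cannot be bypassed, only relocated.

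A secondary issue is your explanation of why the genuinely degenerate terms vanish. The decay does not come from ``volume-of-ball asymptotics'' or factors of the form $V_{d'}/V_d^{\,c}$; it comes from Rogers-type estimates (Lemma~\ref{lemestimatesfromrogers}): for $u\ge 2$ one uses $N(D,u)^d/u^{dr}\le u^{-d}\le 2^{-d}$, and for $u=1$ one uses integral inequalities showing that matrices with an entry of modulus $\ge 2$, or with a row containing two nonzero entries, contribute $O(2^{-d})$ and $O((3/4)^{d/2})$ respectively. These bounds are uniform enough to beat the (fixed, $k$-dependent) number of combinatorial types, which is why the remainder $R^{\mathrm{affine}}_{d,k}$ is $O((3/4)^{d/2})$. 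With the partition combinatorics restored and the error mechanism corrected, the rest of your outline (determinacy of the Poisson law, finite-dimensional convergence, and the passage to weak convergence of the process via Billingsley) matches the paper.
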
% F}O}L}D

%%%
Let $q\in \NN_{\ge 2}$ be given. For each $d \geq 2$ consider $\mathcal{S} =\mathcal S_d= \{S_t: t > 0\}$, an increasing family of subsets of $\mathbb{R}^{d}$ and $\bm{p}/q \in \mathbb{Q}^{d}$ for some $\vp=\vp_d\in  \ZZ^d$ coprime with $q$.
% For $\Lambda \in Y_{\bm{p}/q}$ let us denote, by abuse of notation, $\#(S_t \cap \Lambda)$ by $N_t(\Lambda)$ as well.
By abuse of notation, set
\[
    N_t(\Lambda) = \#(S_t \cap \Lambda),
\]
for $\Lambda \in (Y_{\bm{p}/q}, \mu_q)$.

% StatementofTheorem
\begin{theorem}[]\label{thmppmaincong}% F{O{L{D
    % Enumerate
    % \renewcommand{\labelenumi}{\emph{\Roman{enumi}.}}
    \begin{enumerate}[(i)]
        \item For $q \geq 3$, the stochastic process $\{N_t(\Lambda) : t > 0\}$ converges weakly to $\{N^1(t) : t > 0\}$
%%%
as $d$ goes to infinity.
%%%
        \item For $q = 2$, assume that $S_t$'s are symmetric about origin, and let $\widetilde{N}_t = \frac{1}{2} N_t$.
            Then the stochastic process $\left\{\widetilde{N}_t(\Lambda) : t > 0\right\}$ converges weakly to $\{N^{1/2}(t) : t > 0\}$
%%%
as $d$ goes to infinity.
%%%
    \end{enumerate}
    
\end{theorem}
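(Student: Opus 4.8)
\emph{Proof sketch.} Both parts are proved by the method of moments, using the higher moment formulae for Siegel transforms on $Y_{\vp/q}$ established in the previous section. As in \cite{StSo}, weak convergence of the counting process to a Poisson process on $(0,\infty)$ with nonatomic intensity is a consequence of the convergence of its finite--dimensional distributions along any $0<t_1<\dots<t_r$ (a standard point--process argument, the limit being simple and a.s.\ locally finite). Putting $t_0=0$ and $A_i:=S_{t_i}\setminus S_{t_{i-1}}$, so that $\vol(A_i)=t_i-t_{i-1}$, the increments $N_{t_i}-N_{t_{i-1}}$ equal the Siegel transforms $\widehat{\mathbbm{1}}_{A_i}(\Lambda)=\#(A_i\cap\Lambda)$, and it suffices to show that on $(Y_{\vp/q},\mu)$ the vector $\bigl(\widehat{\mathbbm{1}}_{A_1},\dots,\widehat{\mathbbm{1}}_{A_r}\bigr)$ converges in distribution, as $d\to\infty$, to a vector of independent Poisson variables of parameters $\vol(A_i)$ when $q\ge 3$, respectively (after halving and using the symmetry of the $A_i$) of parameters $\vol(A_i)/2$ when $q=2$.

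Since a product of Poisson laws is determined by its moments, I would deduce this from the convergence of all joint \emph{factorial} moments: for all $k_1,\dots,k_r\ge 0$,
\[
\int_{Y_{\vp/q}}\prod_{i=1}^{r}\bigl(\widehat{\mathbbm{1}}_{A_i}\bigr)_{(k_i)}\,d\mu\;\longrightarrow\;\prod_{i=1}^{r}\vol(A_i)^{k_i}\qquad(d\to\infty),
\]
where $(x)_{(k)}=x(x-1)\cdots(x-k+1)$, the right--hand side being replaced by $\prod_i(\vol(A_i)/2)^{k_i}$ in the case $q=2$. Expanding the falling factorials rewrites the left--hand side as a sum over $n:=k_1+\dots+k_r$ lattice points $v_1,\dots,v_n\in\Lambda$, those within the $i$-th block required to be distinct and to lie in $A_i$; feeding this into the $n$-th moment formula on $Y_{\vp/q}$ splits the integral into a \emph{main term}, arising from $n$-tuples in general position and equal to $\prod_i\vol(A_i)^{k_i}$, plus \emph{correction terms} indexed by the combinatorial data (a partition together with the admissible rational linear relations) recording how the $v_j$ can be dependent.

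\emph{The main point.} The crux is to control these correction terms. A nontrivial integral relation $\sum_j c_j v_j=0$ among points of $\ZZ^d+\vp/q$ forces $\bigl(\sum_j c_j\bigr)\vp/q\in\ZZ^d$, hence $q\mid\sum_j c_j$ (using coprimality of $\vp$ and $q$); this is the only arithmetic obstruction. For $q\ge 3$ it already excludes the relation $v_j+v_{j'}=0$, and, together with the structure of the moment formula, every surviving configuration confines the remaining points to a proper affine sublattice whose covolume grows with $d$ while the $A_i$ keep fixed finite volume, so each correction term is $o(1)$ and the intensity is $1$. For $q=2$ one has $-\Lambda=\Lambda$, the relation $v+v'=0$ is admissible, and since $0\notin\Lambda$ the identity $N_t=2\widetilde{N}_t$ holds pointwise, with $\widetilde{N}_t$ counting the antipodal pairs $\{v,-v\}\subset S_t$; the surviving terms are then exactly those pairing the $2k_i$ points of block $i$ into such pairs, which produces the factorial moments $(\vol(A_i)/2)^{k_i}$, i.e.\ intensity $1/2$. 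The main obstacle is therefore the uniform--in--$d$ decay of the correction terms, simultaneously over all combinatorial types; this is precisely what the explicit form of the higher moment formula for $Y_{\vp/q}$ is designed to supply, and granting it the remaining bookkeeping and the passage from factorial moments to finite--dimensional distributions are routine.
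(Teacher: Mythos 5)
Your strategy is the same method of moments the paper uses, resting on the same two pillars: the higher moment formula for $Y_{\vp/q}$ (Theorem~\ref{higher moment formula}) and the Rogers--S\"odergren decay estimates for the non-generic terms (Lemma~\ref{lemestimatesfromrogers}). Only the packaging differs: you work with joint \emph{factorial} moments of the increments over the disjoint sets $A_i=S_{t_i}\smallsetminus S_{t_{i-1}}$, whereas the paper computes ordinary joint moments of the nested counts $N_{V_1},\dots,N_{V_k}$ and matches them term by term with the moments of the Poisson process via an explicit bijection between the surviving matrices in $\mathfrak M^k$ and set partitions (Lemmas~\ref{lemexppp} and~\ref{lemsodlem3i}). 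Your arithmetic observation --- a relation $\sum_j c_j v_j=0$ on $\ZZ^d+\vp/q$ forces $q\mid\sum_j c_j$, so the antipodal relation $v+v'=0$ survives only for $q=2$ --- is exactly the dichotomy encoded in Remark~\ref{rementryonly1cong} and Corollary~\ref{Lemma for application}; and deferring the uniform decay of the correction terms to the moment-formula machinery is legitimate, since Lemma~\ref{lemestimatesfromrogers} supplies precisely those bounds (your stated mechanism, growth of covolumes of sublattices, is not the actual reason --- the decay comes from volume estimates for integrals of products of indicators against rows with two nonzero entries or an entry of modulus at least $2$, plus the factor $u^{-d}$ for $u\ge 2$).

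One step needs repair. For $q=2$ you assert that the surviving configurations ``pair the $2k_i$ points into antipodal pairs'' and that this yields factorial moments $(\vol(A_i)/2)^{k_i}$. As stated this is not correct: the general-position configurations also survive (they are the leading term), and since $N_t=2\widetilde N_t$ while falling factorials do not scale, $(2M)_{(k)}\neq 2^k(M)_{(k)}$, the factorial moments of $N_{t_i}-N_{t_{i-1}}$ do \emph{not} converge to $(\vol(A_i)/2)^{k_i}$ (already $\mathbb{E}\bigl[(N)_{(2)}\bigr]\to \vol(A)^2+\vol(A)$, which equals $\mathbb{E}\bigl[(2M)_{(2)}\bigr]$ for $M$ Poisson of mean $\vol(A)/2$, not $(\vol(A)/2)^2$). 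You must either compute the factorial moments of the pair count $\widetilde N$ directly --- note that $(\widetilde N)_{(k)}$ equals $2^{-k}$ times the number of ordered $k$-tuples of points of $A\cap\Lambda$ that are pairwise neither equal nor antipodal, whose expectation tends to $\vol(A)^k$, giving the correct limit $(\vol(A)/2)^k$ --- or match the full moment structure of twice a Poisson variable, which is what the paper does through the counting identity $M^{\mathrm{cong}}_{\alpha,\beta}=2^{k-\#\alpha}M^{\mathrm{cong}}_{\alpha,\beta,+}$ together with Lemma~\ref{lemsodlem3i}. With that correction, and the routine inclusion--exclusion needed to express the within-block distinctness in terms of the Siegel transforms to which Theorem~\ref{higher moment formula} applies, your argument goes through.
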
% F}O}L}D

Next we establish a central limit theorem for the normalized error term in the lattice point problem for a random affine lattice. 
\begin{theorem}\label{Strom-Sod main thm: affine}
Let $\phi:\NN\rightarrow \RR_{>0}$ be a function for which 
\begin{equation}\label{condition for f}
\lim_{d\rightarrow \infty} \phi(d)=\infty
\quad\text{and}\quad 
\phi(d)=O_\varepsilon(e^{\varepsilon d}),\; \forall \varepsilon >0.
\end{equation} 
Consider a sequence $\{S_d\}_{d\in \NN}$ of Borel sets $S_d \subseteq \RR^d$ such that $\vol(S_d)=\phi(d)$.
Let
\[
Z^1_d=\frac {\#\left(\Lambda\cap S_d\right)- \phi(d)} {\sqrt {\phi(d)}},
\]
be the random variable with $\Lambda \in (Y, \mu_Y)$. %picked randomly according to the rule $\vol\times\mu$. 
Then
\[
Z^1_d \rightarrow \mathcal N(0,1)\text{ as } d\rightarrow \infty
\]
in distribution.
\end{theorem}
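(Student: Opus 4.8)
The plan is to follow the method of moments, exactly in the spirit of Strömbergsson--Södergren \cite{StSo}, but using the higher moment formula for the Siegel transform on $Y = \ASL_d(\ZZ)\backslash\ASL_d(\RR)$ that is the technical heart of this paper. Write $f_d = \one_{S_d}$ for the indicator of $S_d$, and let $\widehat{f_d}(\Lambda) = \sum_{v \in \Lambda} f_d(v)$ be its (affine) Siegel transform, so that $\#(\Lambda\cap S_d) = \widehat{f_d}(\Lambda)$ and $Z^1_d = (\widehat{f_d} - \phi(d))/\sqrt{\phi(d)}$. The first-moment formula on $Y$ (Siegel's formula in the affine setting) gives $\int_Y \widehat{f_d}\,d(\vol\times\mu) = \vol(S_d) = \phi(d)$, so $Z^1_d$ is already centered, $\EE[Z^1_d]=0$. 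It then suffices to show that for every fixed integer $k\geq 1$,
\[
\EE\!\left[(Z^1_d)^k\right] \longrightarrow \EE\!\left[\mathcal N(0,1)^k\right] = \begin{cases} (k-1)!! & k \text{ even},\\ 0 & k \text{ odd},\end{cases}
\]
as $d\to\infty$, since the normal distribution is determined by its moments.

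The key step is to expand $(Z^1_d)^k = \phi(d)^{-k/2}(\widehat{f_d}-\phi(d))^k$ and integrate term by term. Expanding the $k$-th power produces a linear combination of $\int_Y \widehat{f_d}^{\,j}\,d(\vol\times\mu)$ for $0\le j\le k$, and each of these is evaluated by the $j$-th moment formula on $Y$. The moment formula expresses $\int_Y \widehat{f_d}^{\,j}$ as a sum, over set partitions or over sublattice-type combinatorial data, of integrals of products of $f_d$; the dominant contribution comes from the "fully diagonal" or "independent" terms which, as in the classical case, behave like the moments of a Poisson$(\phi(d))$ random variable, i.e.\ $\int_Y\widehat{f_d}^{\,j} \sim \sum_{\text{partitions of }[j]} \phi(d)^{\#\text{blocks}} + (\text{lower order})$. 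Substituting these into the binomial expansion of $(\widehat{f_d}-\phi(d))^k/\phi(d)^{k/2}$, the leading terms reorganize into the $k$-th centered moment of a Poisson$(\phi(d))$ variable divided by $\phi(d)^{k/2}$, which converges to the $k$-th moment of $\mathcal N(0,1)$ by the classical Poisson-to-Gaussian limit. One must also check that the non-principal terms in the moment formula — those involving proper sublattices, with $\covol$ factors in the denominators, in the affine setting with the extra translation variable — are $o(\phi(d)^{k/2})$; here the growth condition $\phi(d)=O_\varepsilon(e^{\varepsilon d})$ enters, exactly as in \cite{StSo}, to dominate the number of such terms (which grows at most exponentially in $d$) against the volume of balls $V_d$, which decays super-exponentially, and the associated geometric series over sublattices.

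The main obstacle I expect is precisely the bookkeeping and estimation of these error terms in the affine higher moment formula: one has to (i) identify the correct combinatorial indexing of the terms in $\int_Y\widehat{f_d}^{\,j}$, disentangling the "linear part" in $\SL_d$ from the translation part, (ii) isolate the principal terms and match them to Poisson moments, and (iii) bound the remaining terms uniformly in $d$ by something of size $o(\phi(d)^{k/2})$, using the subexponential growth of $\phi$ together with the decay of $V_d$ and uniform bounds on Siegel transforms of bounded sets. A secondary technical point is that $S_d$ is an arbitrary Borel set of the prescribed volume, so all estimates must be made only in terms of $\vol(S_d)=\phi(d)$ and not the shape of $S_d$; this is automatic once the moment formula is phrased with $f_d=\one_{S_d}$ and only $\|f_d\|_1$, $\|f_d\|_1^j$, etc.\ appear, but it must be verified that no term requires control of, say, $\|f_d\|_\infty$ on a large set. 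Once the moment formula on $Y$ is in hand (from the earlier sections of the paper), the argument is a direct adaptation of \cite{StSo}, and I would present it as such, emphasizing only the points where the affine structure changes the combinatorics.
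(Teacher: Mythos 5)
Your proposal is correct and follows essentially the same route as the paper: the method of moments, with the centered expansion $\prod(\widehat{f_i}-\int f_i)$ evaluated via the affine higher moment formula (the paper packages this cancellation as Theorem~\ref{New Rogers: affine} and then proves Proposition~\ref{SS Proposition 4.1}, where the surviving matrices correspond to partitions into blocks of size at least two, yielding $(k-1)!!$, while the error terms are killed by Lemma~\ref{lemestimatesfromrogers} together with $\phi(d)=O_\varepsilon(e^{\varepsilon d})$). The only minor imprecision is your appeal to the decay of ball volumes; since $S_d$ is an arbitrary Borel set, the actual error control comes from the Rogers--S\"odergren estimates on the sums over non-principal matrices, not from any geometry of $S_d$.
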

We now turn to the space $Y_{{\vp}/q}$ which can be viewed as a finite volume homogeneous space of $\SL_{d}(\RR)$ (see Section \ref{sec:cong}) and therefore inherits a natural finite Haar measure $\mu_q$.
\begin{theorem}\label{Strom-Sod main thm: congruence}
Let a function $\phi:\NN\rightarrow \RR_{>0}$ and a sequence $\{S_d\}$ of Borel sets be given as in Theorem~\ref{Strom-Sod main thm: affine}. When $q=2$, we further assume that each $S_d$ is symmetric with respect to the origin.
Let
\[
Z^{\vp/q}_d=\left\{\begin{array}{cl}
\dfrac {\#\left(\Lambda\cap S_d\right)- \phi(d)} {\sqrt {2\phi(d)}}, &\text{if } q=2;\\[0.2in]
\dfrac {\#\left(\Lambda\cap S_d\right)- \phi(d)} {\sqrt {\phi(d)}}, &\text{otherwise}
\end{array}\right.
\]
be a random variable associated with $\Lambda\in (Y_{\vp/q}, \mu_q)$. Then 
\[
Z^{\vp/q}_d \rightarrow \mathcal N(0,1)\text{ as } d\rightarrow \infty
\]
in distribution.
\end{theorem}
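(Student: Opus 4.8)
The plan is to mirror the proof of Theorem \ref{Strom-Sod main thm: affine} (and, ultimately, the Str\"{o}mbergsson--S\"{o}dergren strategy) via the method of moments, replacing the affine moment formula by its congruence analogue established earlier in the paper. Write $N_d(\Lambda) = \#(\Lambda \cap S_d)$, which is the Siegel transform $\widehat{\one_{S_d}}(\Lambda)$ on the space $(Y_{\vp/q}, \mu_q)$. For $q \geq 3$ the first moment is $\int_{Y_{\vp/q}} \widehat{\one_{S_d}}\, d\mu_q = \vol(S_d) = \phi(d)$, so $Z^{\vp/q}_d$ has mean $0$; for $q = 2$ with $S_d$ symmetric the relevant normalization $\sqrt{2\phi(d)}$ is forced by the fact that the congruence lattice $\ZZ^d + \vp/2$ pairs up points $\pm(\text{something}) + \vp/2$, effectively doubling the variance, exactly as in the affine heuristic. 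The strategy is: (1) compute $\EE[(N_d - \phi(d))^k]$ using the $k$-th moment formula on $Y_{\vp/q}$; (2) show that, after dividing by the appropriate power of $\sqrt{\phi(d)}$ (or $\sqrt{2\phi(d)}$), all cross-terms coming from partitions of $\{1,\dots,k\}$ into blocks of size $\geq 3$, and all contributions of the "lower-order" pieces of the moment formula, vanish as $d \to \infty$ under the growth hypothesis \eqref{condition for f}; (3) conclude that the limiting moments are precisely those of $\mathcal N(0,1)$, i.e.\ $(k-1)!!$ for even $k$ and $0$ for odd $k$; (4) invoke the fact that the Gaussian is determined by its moments to upgrade convergence of moments to convergence in distribution.

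The concrete mechanism in step (2) is that the $k$-th moment formula on $Y_{\vp/q}$ expresses $\int \widehat{\one_{S_d}}^{\,k}\, d\mu_q$ as a main term $\phi(d)^{?}$-type sum over set partitions plus error terms indexed by the possible "linear relations" among $k$ lattice points of $\Lambda$; the dominant contribution to the centered $k$-th moment comes from partitions into pairs, each pair contributing a factor asymptotic to $\phi(d)$ (resp.\ $2\phi(d)$ when $q=2$), while partitions with a block of size $\geq 3$ contribute $O(\phi(d)^{\lfloor k/2 \rfloor - 1/2})$ or smaller, hence die after normalization. Here one uses crucially that $\phi(d) = O_\varepsilon(e^{\varepsilon d})$ so that the combinatorial/geometric constants in the moment formula (which are controlled by ratios of volumes of balls and by $\zeta$-type factors, all of which are $1 + o(1)$ or at worst $e^{o(d)}$ as $d \to \infty$) do not overwhelm the powers of $\phi(d)$; this is precisely the role condition \eqref{condition for f} played in Theorem \ref{Strom-Sod main thm: affine}, and the same estimates transfer because the congruence moment formula differs from the affine one only by bounded (in $d$, for fixed $q$) arithmetic factors. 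The $q = 2$ case needs the symmetry assumption so that the two points $v$ and $-v - \vp$ (both lying in $\ZZ^d + \vp/2$) are simultaneously in or out of $S_d$, which is what produces the clean doubling; without symmetry the pairing argument breaks and the limit need not be Gaussian with this normalization.

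The main obstacle I expect is step (1)--(2): setting up the $k$-th moment formula on $Y_{\vp/q}$ with enough precision to isolate the pair-partition main term and to bound the remaining terms uniformly in $d$. On the affine space the moment formula has a somewhat different shape than on $\mathcal X_d$ (there are no nonzero "primitive relation" subtleties of the same kind, but there is an extra translation parameter), and on $Y_{\vp/q}$ one must track how the congruence condition $\vp/q$ interacts with sublattice/coset counts — in particular verifying that the error terms carry a genuine saving of a power of $\phi(d)^{1/2}$ and not merely a constant. Granting the moment formula from earlier in the paper, the rest is a careful but routine asymptotic bookkeeping argument: expand the binomial, match pair-partitions to the $(k-1)!!$ Gaussian moments, send $d \to \infty$, and cite the moment-determinacy of $\mathcal N(0,1)$ together with, say, the method-of-moments convergence criterion to finish. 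For $q = 2$, one runs the identical argument on the quotient $\widetilde N_d = \frac12 N_d$, noting that $\widetilde N_d$ takes integer values because of the symmetry, and that its centered moments are those computed above divided by $2^k$, producing the same $\mathcal N(0,1)$ limit after the $\sqrt{2\phi(d)}$ normalization.
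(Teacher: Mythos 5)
Your proposal follows essentially the same route as the paper: the method of moments applied to the centered Siegel transform on $Y_{\vp/q}$ (the paper's Theorem~\ref{New Rogers: congruence}, obtained by exactly the binomial expansion you describe), with the pair-partition terms producing $(k-1)!!$, the remaining terms killed by the Rogers--S\"odergren estimates of Lemma~\ref{lemestimatesfromrogers} under the growth condition \eqref{condition for f}, and the $q=2$ symmetry accounting for the extra factor $2^{k/2}$ via the $\pm1$ entries of the admissible matrices. This matches the paper's Proposition~\ref{SS Proposition 4.1} (specialized to $\iota=1$), so the plan is correct and not genuinely different.
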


The next two theorems are functional central limit theorems in the affine and congruence case respectively.  
\begin{theorem}\label{Strom-Sod Brownian: affine} Let a function $\phi:\NN\rightarrow \RR_{>0}$ be given as in Theorem~\ref{Strom-Sod main thm: affine}.
Consider a sequence $\{S_d\}_{d\in \NN}$ of star-shaped Borel sets $S_d\subseteq \RR^d$ centered at the origin such that $\vol(S_d)=\phi(d)$.
Let us define the random function
\[
t\in [0,1]
\mapsto Z^1_d(t):= \frac {\#\left(\Lambda \cap t^{1/d}S_d\right) - t\phi(d)}{\sqrt{\phi(d)}},
\]
where $\Lambda$ is a random affine lattice in $(Y, \mu_Y)$.
%%%
Here, $tS=\{t\vv\in \RR^d : \vv\in S\}$ for any $t\in \RR_{\ge 0}$ and $S\subseteq \RR^d$.
Then $Z^1_d(t)$ converges in distribution to one-dimensional Brownian motion as $d$ goes to infinity. 
\end{theorem}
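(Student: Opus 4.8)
The plan is to prove convergence of the finite-dimensional distributions together with tightness, which together give weak convergence in $C[0,1]$ (or in the Skorokhod space) to Brownian motion; this mirrors the strategy of Str\"ombergsson--S\"odergren. For the finite-dimensional distributions, fix $0\le t_1<\dots<t_k\le 1$ and consider the random vector $(Z^1_d(t_1),\dots,Z^1_d(t_k))$. Writing $Z^1_d(t_j)-Z^1_d(t_{j-1})$ in terms of the Siegel transform of the indicator of the annular region $t_j^{1/d}S_d\setminus t_{j-1}^{1/d}S_d$ applied to the random affine lattice $\Lambda$, one sees that $\vol$ of this annulus is $(t_j-t_{j-1})\phi(d)$. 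The method of moments then reduces the problem to showing that all joint moments of these increments converge to the corresponding joint moments of independent Gaussians $\sqrt{t_j-t_{j-1}}\,\mathcal N(0,1)$. This is exactly where the higher moment formulae for the Siegel transform on $Y=\ASL_d(\ZZ)\backslash\ASL_d(\RR)$ enter, established earlier in the paper: they express the $r$-th moment of $\widehat{\one_{A}}$ as a main term $\vol(A)^{?}$-type contribution plus lower-order terms governed by partitions/subgroup contributions, and the affine structure is precisely what kills the arithmetic ``degenerate'' terms that survive in the linear $\SL_d$ case. One checks that as $d\to\infty$ the combinatorics of these moment formulae, after the normalization by $\sqrt{\phi(d)}$ and under the growth condition $\phi(d)=O_\varepsilon(e^{\varepsilon d})$, collapse to the Wick/Isserlis formula for a Gaussian family with the stated covariance $\min(s,t)$ — i.e. odd moments vanish and even moments match pair-partitions — and, crucially, that cross terms between disjoint annuli vanish in the limit, yielding independence of increments.

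For tightness I would use a moment criterion: it suffices to bound $\EE\big[|Z^1_d(t)-Z^1_d(s)|^{4}\big]\le C\,(t-s)^{2}$ (or $|t-s|^{1+\delta}$ with some moment of order $>2$), uniformly in $d$. Again this comes directly from the fourth-moment formula for the Siegel transform on $Y$ applied to the annulus $t^{1/d}S_d\setminus s^{1/d}S_d$, whose volume is $(t-s)\phi(d)$: after dividing by $\phi(d)^2$ the leading term is of size $(t-s)^2$ and the subleading terms are lower order in $d$ (and also controlled by powers of $t-s$), using the star-shapedness of $S_d$ to ensure the family $\{t^{1/d}S_d\}$ is genuinely nested so that the increments are honest counting functions on a nested family of sets. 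Combining the Kolmogorov--Chentsov tightness criterion with the finite-dimensional convergence gives convergence in distribution to the process with independent Gaussian increments and covariance $\min(s,t)$, which is standard Brownian motion.

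The main obstacle I anticipate is not the abstract scheme but the uniform-in-$d$ control of the error terms in the higher moment formulae: one must show that every non-principal term (indexed by nontrivial partitions of the index set, or by proper subgroups/sublattices in the geometry-of-numbers estimates underlying Rogers-type formulae) contributes $o(\phi(d)^{r/2})$ for the $r$-th moment, uniformly as $d\to\infty$, and this is exactly the place where the hypothesis $\phi(d)=O_\varepsilon(e^{\varepsilon d})$ for all $\varepsilon>0$ must be used to beat the exponential-in-$d$ factors (volumes of balls, values of $\zeta(d)$, $\zeta(d-1)$, etc.) appearing in those formulae. A secondary technical point is handling the boundary behaviour near $t=0$, where $0S_d=\{\origin\}$ and the counting function degenerates; here the affine setting helps since the origin is almost surely not a lattice point, and one checks continuity at $0$ directly from the small-$t$ moment bounds.
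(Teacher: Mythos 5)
Your treatment of the finite-dimensional distributions is essentially the paper's argument: decompose into increments over the annuli $t_j^{1/d}S_d\setminus t_{j-1}^{1/d}S_d$ (disjoint by star-shapedness, with volumes $(t_j-t_{j-1})\phi(d)$), and apply the method of moments via the higher moment formulae on $Y$, using $\phi(d)=O_\varepsilon(e^{\varepsilon d})$ to kill the non-principal terms. That part is sound.

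The tightness step, however, has a genuine gap. The bound you propose,
\[
\EE\bigl[|Z^1_d(t)-Z^1_d(s)|^{4}\bigr]\le C\,(t-s)^{2}
\]
uniformly in $d$, is false. The increment is a normalized centered counting variable on a set of volume $\lambda=(t-s)\phi(d)$, and its fourth central moment contains a term \emph{linear} in $\lambda$ (already visible in the Poisson heuristic, where the fourth central moment is $\lambda+3\lambda^2$; in the moment formulae this is the contribution of the diagonal matrix ${\tp{(1,1,1,1)}}$-type term). After dividing by $\phi(d)^{2}$ this yields a contribution of order $(t-s)/\phi(d)$, which dominates $(t-s)^{2}$ whenever $t-s\lesssim 1/\phi(d)$. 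The same failure occurs for any single-increment criterion of Kolmogorov--Chentsov type with exponent $1+\delta$, and moreover $Z^1_d$ is a jump process, so one cannot work in $C[0,1]$ with that criterion anyway. This is precisely why the paper (following Str\"ombergsson--S\"odergren) instead verifies Billingsley's criterion for $D[0,1]$ (Theorem 13.3 and (13.14) of \cite{Bil}), which requires the \emph{mixed} moment over adjacent disjoint intervals,
\[
\EE\bigl[(Z^1_d(s)-Z^1_d(r))^{2}(Z^1_d(t)-Z^1_d(s))^{2}\bigr]\ll(\sqrt t-\sqrt r)^{2},
\]
for $0\le r\le s\le t\le 1$. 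Because the two increments count points in disjoint sets, the problematic diagonal term does not arise in the same way (the leading contribution is a product $\lambda_1\lambda_2\le\tfrac14(\lambda_1+\lambda_2)^2$), and in the affine case the relevant degenerate matrices are absent ($\bigcup_{u}\mathfrak S^4_{1,u}\cap(\mathfrak R_1\cup\mathfrak R_2)=\emptyset$), so the bound follows from the fourth moment formula. You need to replace your tightness criterion by this adjacent-increments version; as stated, your argument would not close.
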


\begin{theorem}\label{Strom-Sod Brownian: congruence}
Let a function $\phi:\NN\rightarrow \RR_{>0}$ and a sequence $\{S_d\}_{d\in \NN}$ of Borel sets be as in Theorem~\ref{Strom-Sod  Brownian: affine}.
When $q=2$, we further assume that each $S_d$ is symmetric with respect to the origin.
Define the random function
\[
t\in [0,1]
\mapsto Z^{\vp/q}_d(t):= \left\{\begin{array}{cl}
\dfrac {\#\left(\Lambda \cap t^{1/d}S_d\right) - t\phi(d)}{\sqrt{2\phi(d)}}, &\text{if } q=2;\\[0.2in]
\dfrac {\#\left(\Lambda \cap t^{1/d}S_d\right) - t\phi(d)}{\sqrt{\phi(d)}}, &\text{otherwise.}\end{array}\right.
\]
Then $Z^{\vp/q}_d(t)$ converges in distribution to one-dimensional Brownian motion. 
\end{theorem}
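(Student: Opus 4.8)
The plan is to mimic the strategy of Theorem~\ref{Strom-Sod Brownian: affine}, transferring the affine-case argument to the congruence homogeneous space $Y_{\vp/q}$ via the higher moment formulae established earlier in the paper. By the standard criterion for weak convergence to Brownian motion (finite-dimensional distributions converge to those of Brownian motion, plus tightness), it suffices to prove two things: first, that for any fixed $0 \le t_1 < t_2 < \cdots < t_k \le 1$ the random vector $\left(Z^{\vp/q}_d(t_1), \ldots, Z^{\vp/q}_d(t_k)\right)$ converges in distribution to a centered Gaussian vector with the Brownian covariance $\min(t_i,t_j)$; and second, a tightness/moment bound of Kolmogorov type, e.g.\ a uniform estimate $\EE\left[\left(Z^{\vp/q}_d(t) - Z^{\vp/q}_d(s)\right)^4\right] = O\left((t-s)^2\right)$, which the fourth moment formula on $Y_{\vp/q}$ should supply.

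For the finite-dimensional distributions I would use the method of moments. Write $N_t = \#(\Lambda \cap t^{1/d}S_d)$ and let $F_t = \mathbbm 1_{t^{1/d}S_d}$ be the indicator, so that $N_t - t\phi(d) = \widehat F_t(\Lambda) - \EE[\widehat F_t]$ is a centered Siegel transform on $Y_{\vp/q}$ (using the first moment formula $\EE[\widehat F_t] = \vol(t^{1/d}S_d) = t\phi(d)$, valid for $q \ge 2$ since the constant function $1$ is not in the support issue here — for $q=2$ the extra symmetry hypothesis and the factor $2$ in the normalization absorb the origin/antipodal contribution, exactly as in the statement). Then an arbitrary mixed moment $\EE\left[\prod_{j} \left(\widehat F_{t_{i_j}} - \EE \widehat F_{t_{i_j}}\right)\right]$ is computed by the $m$-th moment formula on $Y_{\vp/q}$, which decomposes the integral of a product of $m$ Siegel transforms as a sum over partition-type combinatorial data. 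As $d \to \infty$, after dividing by the appropriate power of $\phi(d)$ (one half-power per factor), all terms coming from ``higher-order'' correlations are lower order, and only the terms where the $m$ lattice points pair up survive; these reproduce exactly the Wick/Isserlis formula for the moments of the target Gaussian vector with covariance $\min(t_i,t_j)\phi(d)/\phi(d)$ — here the $\min$ arises because $t^{1/d}S_d \subseteq s^{1/d}S_d$ when $t \le s$ (star-shapedness), so $\EE\left[(N_s - s\phi)(N_t - t\phi)\right]$ is governed by the smaller set. This is the same mechanism as in \cite{StSo} and in the affine Theorem~\ref{Strom-Sod main thm: affine}/\ref{Strom-Sod Brownian: affine}.

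The main obstacle — and the reason the higher moment formulae of this paper are needed rather than a soft argument — is controlling the error terms in the $m$-th moment formula on $Y_{\vp/q}$ \emph{uniformly in $d$} and showing that, after normalization, they vanish in the limit. The moment formula on $Y_{\vp/q}$ has more terms than on $\mathcal X_d$ (one must track the congruence class $\vp/q$ and the possible coincidences of points modulo $q$), and one must verify that the combinatorial coefficients and volume factors in these extra terms decay fast enough in $d$, using the hypothesis $\phi(d) = O_\varepsilon(e^{\varepsilon d})$ to beat the exponential growth of the number of terms and of quantities like $V_d^{-1}$. Once the mixed moments are pinned down, tightness follows from the fourth-moment estimate (a special case of the moment computation) via the Kolmogorov–Chentsov criterion, and weak convergence in $C[0,1]$ (or the appropriate Skorokhod space, since $N_t$ has jumps, then passing to the continuous limit) follows. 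For $q=2$ the only additional wrinkle is bookkeeping: the symmetry of $S_d$ forces lattice points to occur in $\pm$ pairs for the relevant part of the count, halving the effective intensity, which is precisely why $\widetilde N_t = \tfrac12 N_t$ and the $\sqrt{2\phi(d)}$ normalization appear, matching Theorem~\ref{thmppmaincong}(ii); the Gaussian limit is still standard normal after this rescaling. Throughout, I would simply cite the affine argument and indicate the (few) places where the congruence moment formula must be substituted, keeping the exposition parallel to the proof of Theorem~\ref{Strom-Sod Brownian: affine}.
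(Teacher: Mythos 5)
Your overall strategy (method of moments for the finite-dimensional distributions, followed by a moment-based tightness bound) is the same as the paper's, but your tightness step contains a genuine error. The estimate $\EE\bigl[(Z^{\vp/q}_d(t)-Z^{\vp/q}_d(s))^4\bigr]=O((t-s)^2)$ is false: the fourth moment of a centered count over a region of volume $(t-s)\phi(d)$ carries a ``diagonal'' term of order $(t-s)\phi(d)$ (for a Poisson variable of mean $\lambda$ the fourth central moment is $\lambda+3\lambda^2$), so after normalization one gets $(t-s)/\phi(d)+O((t-s)^2)$, and the first term dominates once $t-s\lesssim 1/\phi(d)$; no uniform $O((t-s)^2)$ bound holds. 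Moreover Kolmogorov--Chentsov is a criterion for continuous modifications and does not apply to the jump process $Z^{\vp/q}_d$. The paper instead verifies Billingsley's criterion for $D[0,1]$ (\cite[Theorem 13.3 and (13.14)]{Bil}): for $0\le r\le s\le t\le 1$, $\EE\bigl[(Z^{\vp/q}_d(s)-Z^{\vp/q}_d(r))^2(Z^{\vp/q}_d(t)-Z^{\vp/q}_d(s))^2\bigr]\ll(\sqrt t-\sqrt r)^2$, following (4.4)--(4.9) of \cite{StSo}; the product over two \emph{adjacent, disjoint} increments is exactly what suppresses the diagonal contribution, and the paper notes that in the congruence case this step genuinely requires the argument of \cite{StSo} (unlike the affine case, where it degenerates).

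A second, smaller divergence: for the finite-dimensional distributions the paper does not compute mixed moments of the nested counts $N_{t_1},\dots,N_{t_\iota}$ directly as you propose. It passes to the increments over the disjoint annuli $S_{i,d}=t_i^{1/d}S_d\smallsetminus t_{i-1}^{1/d}S_d$ (disjoint by star-shapedness) and applies Proposition~\ref{SS Proposition 4.1}, whose proof uses the disjointness of the supports to force the matrices $D''$ in Theorem~\ref{New Rogers: congruence} into block-diagonal form, so that the joint moment factors and each block yields a double factorial via the partition count. Extracting the covariance $\min(t_i,t_j)$ from overlapping supports would require redoing that combinatorics; switching to increments makes the existing proposition applicable verbatim and gives independent Gaussian increments, which is equivalent to the Brownian covariance. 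Your account of the $q=2$ normalization (points in $\pm$ pairs, hence the $\sqrt{2\phi(d)}$) is consistent with the paper.
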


%\subsection{Higher Moments}
%We recall the definition of the Siegel transform. For a bounded compactly supported function $f:\RR^d \rightarrow \RR$, it's Siegel transform $\widehat f : \SL_d(\ZZ)\backslash \SL_d(\RR) \rightarrow \RR$ by
%\[
   % \widehat f (\Lambda)=\sum_{{\vv\in \Lambda}} f(\vv).
%\]

\subsection*{Structure of the paper}
In section \ref{Section:Higher Moment Formulae}, we state and prove the moment formulae for the affine and congruence cases. In fact, we provide two approaches, one kindly suggested to us by the referee. Section 3 is devoted to the study of Poissonian behaviour. In particular, analogues of results of S\"{o}dergren \cite{Sod} and Rogers \cite{Rogers55-2, Rogers56} in the affine and congruence setting are established. These results might be of independent interest. Section 4 contains affine and congruence versions of the variation on Rogers' formula developed by Str\"{o}mbergsson and  S\"{o}dergren. Finally, Section 5 is devoted to the proofs of the counting results.

\subsection*{Acknowledgements}
We are very grateful to the anonymous referee for an extremely detailed report which pointed out several mistakes in an earlier version of the paper and also generously offered solutions to some of the issues. A. G. gratefully acknowledges support from a MATRICS grant from the Science and
Engineering Research Board, a grant from the Infosys foundation and a Department of
Science and Technology, Government of India, Swarnajayanti fellowship. 
J. H. was supported by a KIAS Individual Grant MG088401 at Korea Institute for Advanced Study.
The authors were supported by the Department of Atomic Energy, Government of India,
under project no.12-R\&D-TFR-5.01-0500.

% F}O}L}D
% Section
\section{Higher Moment Formulae}\label{Section:Higher Moment Formulae}% F{O{L{D
We define
\[
    \ASL_d(\RR) := \left\{\left(\begin{array}{cc}
                g & 0 \\
                \xi & 1 \\
    \end{array}\right) : g\in \SL_d(\RR),\; \xi \in \RR^d \right\}
\]
and denote by $(\xi,g)$ an element of $\ASL_d(\RR)$.
One can identify the space of affine unimodular lattices with 
\[Y_d = Y =\ASL_d(\ZZ)\backslash \ASL_d(\RR)\]
via the map 
$$\ASL_d(\ZZ)(\xi,g) \mapsto \ZZ^d g+\xi.$$
We denote by $\mu_Y$ the Haar measure on $\ASL_d(\mathbb{R})$ normalized so that $$\mu_Y(\ASL_d(\mathbb{Z}) \backslash \ASL_d(\mathbb{R})) = 1.$$
%Thus $\d{\mu_Y} = \d{\mu}\d{\xi}$.

Let $F:(\RR^d)^k \rightarrow \RR$ be a bounded function of compact support. Define the transform $\Siegel{k}{F}$ of $F$ by
\[
    \Siegel{k}{F}(\Lambda)=\sum_{\scriptsize \begin{array}{c}
            \vm_i
            \in \Lambda\\
    1\le i \le k\end{array}}
    F(\vm_1, \ldots, \vm_k),\; \Lambda\in \ASL_d(\ZZ)\backslash \ASL_d(\RR).
\]

 By a mild abuse of notation, we will use $\Siegel{k}{F}$ to also denote the function induced by the natural inclusion $$\SL_d(\ZZ)\backslash\SL_d(\RR)\hookrightarrow \ASL_d(\ZZ)\backslash \ASL_d(\RR).$$

%%%%%%%%%%%%%%%%%%%%%%%%%%%
\begin{notation}\label{notation}
We follow Rogers \cite{Rogers55} in setting some notation and recalling the definition of \emph{admissible} matrices. 
    \begin{enumerate}
        %%%%%
        \item 
        We will identify the
        %%% 
        $k$-th power $(\RR^d)^k$
        %%%
        of $\RR^d$ with $\Mat_{k,d}(\RR)$.
            For a matrix $D$, denote by $[D]^j$ the $j$-th column of $D$ and $[D]_i$ the $i$-th row of $D$.
            %%%%%
        \item For $u\in \NN$ and $r\in \{1, \ldots, k\}$, the collection $\mathfrak D_{r,u}^k$ is the set of integral matrices $D=(d_{ij}) \in \Mat_{k, r}(\ZZ)$ such that the greatest common divisor of all elements of $D$ is one and there are $1\le i_1 < \ldots < i_r \le k$ with the following properties:
            \begin{enumerate}[(i)]
                \item ${\tp{([D]_{i_1}, \ldots, [D]_{i_r})}}=u\Id_r$;
                \item $d_{ij}=0$ for $1\le j \le r$ and $1\le i < i_j$.
            \end{enumerate}
            We say that $D$ is admissible if $D$ satisfies the above properties.
        \item For each $D\in \mathfrak D_{r,u}^k$, 
        \begin{enumerate}
        \item
        set $I_D :=\{i_1<\ldots<i_r\}$, where $i_1<\ldots<i_r$ are as  above;  
        \item let
        \[
        \Phi^{(d)}(D,u)=\left\{\left(\begin{array}{c} 
        \vn_1 \\
        \vdots \\
        \vn_r \end{array}\right) \in (\ZZ^d)^r : \frac D u \left(\begin{array}{c} 
        \vn_1 \\
        \vdots \\
        \vn_r \end{array}\right)\in (\ZZ^d)^k\quad\text{and}\quad \begin{array}{c}
        \vn_1, \ldots, \vn_r \text{ are}\\
        \text{linearly independent}\end{array}\right\};
        \]
        \item
        define $N(D,u)$ to be the number of vectors $\vv\in \{0, 1 \ldots, u-1\}^r$ for which $$\frac 1 u D \:{\tp{\vv}}\in \ZZ^k.$$
        \end{enumerate}
    \end{enumerate}
\end{notation}
We are now ready to state Rogers's famous integral formula for $\Siegel{k}{F}$ on $\SL_d(\ZZ)\backslash \SL_d(\RR)$ introduced in \cite{Rogers55}.

%%%%%%%%%%%%%%%
\begin{theorem}[Rogers \cite{Rogers55}] \label{Rogers moment}
    Let $F:(\RR^d)^k \rightarrow \mathbb{R}_{\geq 0}$,
%%%
where $1\le k \le d-1$,
%%%
be a bounded function of compact support. Then,
\[\begin{split}
\int_{X_d} \Siegel{k}{F}(\Lambda) \d\mu(\Lambda)
=
%%%
F\left(\begin{array}{c}
\origin\\
\vdots\\
\origin\end{array}\right)
+\sum_{r=1}^k \sum_{u\in \NN} \sum_{D\in \mathfrak D^k_{r,u}}
%%%
\frac {N(D,u)^d} {u^{dr}}
%%%
\int_{(\RR^d)^r} F \left( \frac D u \left(\begin{array}{c}
\vv_1 \\
\vdots \\
\vv_r \end{array}\right)\right)\d \vv_1 \cdots \d \vv_r.
\end{split}\]
\end{theorem}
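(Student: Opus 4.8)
The plan is to derive Rogers's formula for $\Siegel{k}{F}$ by unfolding the sum over $k$-tuples of lattice points according to the $\ZZ$-module structure of the sublattice they generate, and then integrating each piece over $\mathcal X_d$ using the classical Siegel mean value theorem (the $k=1$, $D=\Id_1$, $u=1$ case, which gives $\int_{\mathcal X_d}\Siegel{1}{F}\,d\mu = \int_{\RR^d}F$). I would not give a fully self-contained proof here, since this is Rogers's theorem quoted verbatim from \cite{Rogers55}; instead I would indicate the structure of the argument, since it is the template for the affine and congruence analogues proved later in this section.

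\medskip

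\textbf{Step 1: Decompose the sum by the relation type of the $k$-tuple.} For a $k$-tuple $(\vm_1,\dots,\vm_k)$ of lattice vectors of $\Lambda$, let $r$ be the rank of the subgroup they generate and record the integer linear relations among them. The content of the combinatorial Notation~\ref{notation} is that every such $k$-tuple can be written uniquely as $\frac{1}{u}D\,(\vn_1,\dots,\vn_r)^{\mathrm t}$ where $D \in \mathfrak D^k_{r,u}$ is an admissible matrix, $u\in\NN$, and $(\vn_1,\dots,\vn_r)$ is a primitive $r$-tuple (a basis of a primitive sublattice of $\Lambda$), subject to the divisibility constraint encoded by $\Phi^{(d)}(D,u)$. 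So I would first state and justify this unique parametrization: the admissibility conditions (i)--(ii) pin down a canonical Hermite-type normal form for the relation lattice, $u$ is the relevant elementary divisor, and $N(D,u)$ counts the residues making $\frac1u D\,{}^{\mathrm t}\vv$ integral — which is exactly the index accounting that will produce the factor $N(D,r)^d/u^{dr}$.

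\medskip

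\textbf{Step 2: Reduce each piece to a sum over primitive $r$-tuples, then integrate.} Having split $\Siegel{k}{F}$ as $\sum_{r}\sum_u\sum_{D\in\mathfrak D^k_{r,u}}$ of sums of the form $\sum F\!\left(\frac{D}{u}(\vn_1,\dots,\vn_r)^{\mathrm t}\right)$ over primitive $r$-tuples in $\Lambda$ satisfying the $\Phi^{(d)}(D,u)$ condition, I would integrate term by term over $(\mathcal X_d,\mu)$. The inner object is (a shifted version of) the Siegel transform attached to $r$ linearly independent vectors; the mean value of such a transform over $\mathcal X_d$ is computed by the higher-rank Siegel formula, which contributes $\zeta$-type volume factors that, after incorporating the divisibility/residue data from $D$ and $u$, collapse to $N(D,r)^d/u^{dr}$ times $\int_{(\RR^d)^r}F(\frac{D}{u}\vv)\,d\vv_1\cdots d\vv_r$. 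Here one uses that $F$ is bounded with compact support to justify interchanging sum and integral (absolute convergence) and to ensure only finitely many $(r,u,D)$ contribute nontrivially for any fixed $F$.

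\medskip

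\textbf{Main obstacle.} The delicate part is Step 1: verifying that the normal-form decomposition via admissible matrices is genuinely a bijection — no double counting, no omissions — and correctly tracking how the index $[\ZZ^k : \frac1u D(\ZZ^d)^r \cap \text{(relevant sublattice)}]$ type quantities turn into the precise arithmetic factor $N(D,r)^d/u^{dr}$. This is exactly the place where Rogers's original paper contained the inaccuracy noted in the introduction (corrected in \cite{Han}), so extra care with the combinatorics of $\mathfrak D^k_{r,u}$ and with the convergence bookkeeping is warranted; everything else is a routine application of the rank-$r$ Siegel mean value theorem.
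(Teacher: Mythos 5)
Your proposal is correct and follows essentially the same route as the paper, which itself only sketches the argument: the disjoint decomposition $(\ZZ^d)^k=\bigsqcup_{r}\bigsqcup_{u}\bigsqcup_{D\in\mathfrak D^k_{r,u}}\frac{D}{u}\Phi^{(d)}(D,u)$ is your Step 1, and Proposition~\ref{Rogers component wise} (the rank-$r$ component-wise mean value) is your Step 2. Your identification of the bijectivity of the normal-form decomposition and the index bookkeeping as the delicate points is consistent with the paper's remarks on the inaccuracy in Rogers's original treatment.
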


        We note that Rogers did not comment on the nature of convergence of the RHS of the above equation. He did however mention \cite[second paragraph of page 279]{Rogers55} that results in another paper of his \cite[\S9]{Rogers55-2} imply absolute convergence for $d \geq [\tfrac{1}{4}k^2] + 2$).
        Schmidt \cite{Sch58} showed that in the case of a bounded compactly supported function $F : (\mathbb{R}^{d})^k \to \mathbb{R}_{\geq 0}$ the above sum is absolutely convergent, in other words both sides of the above equation are finite (and equal).
        Thus, Rogers's theorem holds also for a bounded compactly supported function $F : (\mathbb{R}^{d})^k \to \mathbb{R}$ and both sides of the above equation are finite in this case (since Rogers's theorem holds for $|F|$, we have absolute convergence of the sum and we can rearrange the terms in the sum).

Theorem~\ref{Rogers moment} follows from the fact that
\[
(\ZZ^d)^k=
%%%
\left\{\tp{(\origin,\ldots, \origin)}\right\} \sqcup  \bigsqcup_{r=1}^k \bigsqcup_{u\in \NN} \bigsqcup_{D\in \mathfrak D^k_{r,u}} \frac D u \Phi^{(d)}(D,u)
\]
and the following proposition.

%%%%%%%%%%%%%%%%%%%
\begin{proposition}[Rogers \cite{Rogers55}]\label{Rogers component wise}
Let $F:(\RR^d)^k \rightarrow \RR$ be a bounded function of compact support.
For each $D\in \mathfrak D^k_{r,u}$,
%%%
we have
%%%
\[
\int_{X_d}
\sum_{\scriptsize \begin{array}{c}
\tp{(\vn_1, \ldots, \vn_r)}\\
\in \Phi^{(d)}(D,u)\end{array}}
F\left(\frac D u \left(\begin{array}{c}
\vn_1 \\ \vdots \\ \vn_r\end{array}\right) g\right) \d\mu(g)
=
%%%
\frac {N(D,u)^d} {u^{dr}}
 \int_{(\RR^d)^r} F \left( \frac D u \left(\begin{array}{c}
\vv_1 \\
\vdots \\
\vv_r \end{array}\right)\right)\d \vv_1 \cdots \d \vv_r.
%%%
\]
\end{proposition}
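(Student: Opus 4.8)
The plan is to absorb the arithmetic of $D$ into a single linear change of variables, reducing the sum over $\Phi^{(d)}(D,u)$ to a sum over \emph{all} linearly independent $r$-frames in $\ZZ^d$, and then to invoke the mean value formula for such frames.

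First I would repackage $\Phi^{(d)}(D,u)$. Identify $\tp{(\vn_1,\dots,\vn_r)}$ with an integer matrix $N\in\Mat_{r,d}(\ZZ)$ (the $\vn_i$ being its rows). Since $D$ is admissible, its submatrix on the rows $I_D$ equals $u\Id_r$, so the rows of $\tfrac1u D\,N$ indexed by $I_D$ are exactly the rows of $N$; thus $N$ is integral precisely when those rows of $\tfrac1u D\,N$ are, and the remaining requirement $\tfrac1u D\,N\in\Mat_{k,d}(\ZZ)$ is a congruence condition on the columns of $N$ modulo $u$. Put $H:=\bigl\{\bar v\in(\ZZ/u\ZZ)^r:\tfrac1u D\,v\in\ZZ^k\bigr\}$, a subgroup of order $N(D,u)$, and $\Lambda_D:=\{v\in\ZZ^r:\bar v\in H\}$, a sublattice of $\ZZ^r$ of index $u^r/N(D,u)$, so that
\[
\Phi^{(d)}(D,u)=\bigl\{N\in\Mat_{r,d}(\ZZ):\rank N=r,\ [N]^j\in\Lambda_D\text{ for every }j\bigr\}.
\]
Fix $A\in\Mat_r(\ZZ)$ with $\det A\neq0$ and $A\ZZ^r=\Lambda_D$ (e.g.\ a Hermite normal form of a basis of $\Lambda_D$); then $|\det A|=u^r/N(D,u)$ and, since $A$ is invertible over $\QQ$, the map $M\mapsto AM$ is a bijection of $\Phi_r:=\{M\in\Mat_{r,d}(\ZZ):\rank M=r\}$ onto $\Phi^{(d)}(D,u)$. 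Writing $\Psi(X):=F\bigl(\tfrac1u D\,A\,X\bigr)$ for $X\in\Mat_{r,d}(\RR)$ — again bounded of compact support, since $\tfrac1u D\,A$ is injective — the left-hand side of the proposition equals $\int_{\mathcal X_d}\sum_{M\in\Phi_r}\Psi(Mg)\,d\mu(g)$.

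Second, I would use the mean value formula for $r$-frames, namely
\[
\int_{\mathcal X_d}\sum_{M\in\Phi_r}\Psi(Mg)\,d\mu(g)=\int_{\Mat_{r,d}(\RR)}\Psi(X)\,dX,
\]
which is the one nonelementary ingredient; I would prove it following Rogers~\cite{Rogers55}. Each $M\in\Phi_r$ factors as $M=TP$ with $T\in\Mat_r(\ZZ)$ nonsingular and $P$ a \emph{primitive} frame (its rows a basis of the primitive rank-$r$ sublattice of $\ZZ^d$ they $\QQ$-span), uniquely up to replacing $(T,P)$ by $(T\gamma^{-1},\gamma P)$ with $\gamma\in\GL_r(\ZZ)$. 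Siegel's mean value theorem~\cite{Sie45}, applied by unfolding $\SL_d(\ZZ)\backslash\SL_d(\RR)$ over the arithmetic parabolic stabilizer of the standard frame $(\Id_r\mid 0)$, gives $\int_{\mathcal X_d}\sum_P\Psi(Pg)\,d\mu(g)=\bigl(\prod_{j=0}^{r-1}\zeta(d-j)\bigr)^{-1}\int\Psi$ (with only routine extra care in the boundary case $r=d$). Summing over $T$ in $\Mat_r(\ZZ)^{\ast}/\GL_r(\ZZ)$ and using the Dirichlet-series identity $\sum_{T\in\Mat_r(\ZZ)^{\ast}/\GL_r(\ZZ)}|\det T|^{-s}=\prod_{j=0}^{r-1}\zeta(s-j)$ at $s=d$ then cancels the zeta factors and leaves the displayed formula with constant $1$.

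Finally I would undo the substitution: in $\int_{\Mat_{r,d}(\RR)}\Psi(X)\,dX=\int F\bigl(\tfrac1u D\,A\,X\bigr)\,dX$ put $V=AX$; this map acts column-wise by $A$ on $\RR^r$, so its Jacobian is $(\det A)^d$, and the integral becomes $|\det A|^{-d}\int_{(\RR^d)^r}F\bigl(\tfrac1u D\,V\bigr)\,dV=\dfrac{N(D,u)^d}{u^{rd}}\int_{(\RR^d)^r}F\bigl(\tfrac1u D\,V\bigr)\,dV$, which is the asserted identity. I expect the genuine obstacle to be Step 2 — verifying that $\SL_d(\ZZ)$ acts on primitive $r$-frames with the expected parabolic stabilizer so that the unfolding is legitimate and the Haar and Lebesgue normalizations match exactly, and evaluating $\sum_{T\in\Mat_r(\ZZ)^{\ast}/\GL_r(\ZZ)}|\det T|^{-d}=\prod_{j=0}^{r-1}\zeta(d-j)$. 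The bracketing Steps 1 and 3 — the identification $\Phi^{(d)}(D,u)=A\cdot\Phi_r$ and the Jacobian $|\det A|^d=(u^r/N(D,u))^d$ — are elementary integral linear algebra.
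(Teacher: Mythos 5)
The paper does not actually prove this proposition --- it is quoted from Rogers \cite{Rogers55} --- so there is no in-paper argument to compare against; judged on its own merits, your reconstruction is correct and is essentially Rogers's original route. Step 1 is right: the integrality condition $\tfrac1u DN\in\Mat_{k,d}(\ZZ)$ is a columnwise condition, $\Lambda_D\supseteq u\ZZ^r$ has index $u^r/N(D,u)$ in $\ZZ^r$ because $N(D,u)=\#(\Lambda_D/u\ZZ^r)$, and $M\mapsto AM$ with $A\ZZ^r=\Lambda_D$ bijects full-rank integer frames onto $\Phi^{(d)}(D,u)$. Step 3 (Jacobian $|\det A|^{d}=(u^r/N(D,u))^{d}$ from the $d$ columns) is also correct and produces exactly the constant $N(D,u)^d/u^{dr}$ (the $N(D,r)$ and $\tfrac1q$ in the displayed statement are typos for $N(D,u)$ and $\tfrac1u$). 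Step 2, the mean value formula for full-rank $r$-frames via the factorization $M=TP$ into a primitive frame and a nonsingular integral coefficient matrix, the Siegel-type unfolding giving $\bigl(\prod_{j=0}^{r-1}\zeta(d-j)\bigr)^{-1}\int\Psi$ for primitive frames, and the identity $\sum_{[T]}|\det T|^{-d}=\prod_{j=0}^{r-1}\zeta(d-j)$, is the standard argument and is exactly where the real work lies, as you say.

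One caution: the case $r=d$ is not ``routine extra care'' --- there $\zeta(d-r+1)=\zeta(1)$ diverges, the rearrangement over $[T]$ is not absolutely convergent, and the frame mean value formula (hence the proposition) genuinely fails in the boundary case (already for $d=k=r=1$ the left side is $\sum_{n\neq 0}F(n)$ while the right side is $\int_\RR F$); this is related to the known breakdown of Rogers's formula at $k=d$. Your argument is sound for $r\le d-1$, which covers every use of the proposition in this paper (the applications in Proposition~\ref{MS Lemma 7.7} and Theorem~\ref{higher moment formula: affine} all have rank strictly below the ambient dimension), but the restriction should be stated rather than waved at.
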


%%%%%%%%%%%%%%%%%%%%%%%%
% Subsection
\subsection{Higher Moment formulae for $Y$}% F{O{L{D
In \cite{EMV2015}, El-Baz, Marklof and Vinogradov established a second moment formula for the Siegel transform on $ Y=\ASL_2(\ZZ)\backslash \ASL_2(\RR)$ which easily extends to the case when $d\ge 3$ (see \cite[Appendix B]{EMV2015}).
We will generalize their result to higher moment formulae for the transform $\Siegel{k}{\cdot}$ on $Y$. It is well-known that 
\[
    \bigcup_{g\in \mathcal F}
    \left\{(\xi,g) : \xi \in [0,1)^d g \right\}
\]
is a fundamental domain for $Y$, where $\mathcal F$ is any fixed fundamental domain for $\SL_d(\ZZ)\backslash \SL_d(\RR)$.
Thus one can take the probability $\ASL_d(\RR)$-invariant measure $\mu_Y$ on $Y$ as the measure inherited from the product of the Haar measure $\mu$ on $\SL_d(\RR)$ and the Lebesgue measure on $\RR^d$.
%$\d{\mu_Y} = \d{\mu}\d{\xi}$.

\begin{theorem}\label{higher moment formula: affine 1}% F{O{L{D
    Let $F:(\RR^d)^k\rightarrow \RR$ be a bounded compactly supported function, and $d\ge 2$. We have the following:
    \begin{enumerate}[(i)]
        \item   For $k = 1$,
        \begin{equation}\label{eq 1: higher moment formula: affine 1 for k=1}
        \begin{split}
            &\int_Y%{\ASL_d(\ZZ)\backslash \ASL_d(\RR)}
             \Siegel{1}{F}(\Lambda) \d\mu_Y(\Lambda)
             = \int_{\RR^d} F(\vy) \d\vy
        \end{split}
        \end{equation}
        \item   For $2\le k\le d$,
        \begin{equation}\label{eq 1: higher moment formula: affine 1}
        \begin{split}
            &\int_Y%{\ASL_d(\ZZ)\backslash \ASL_d(\RR)}
             \Siegel{k}{F}(\Lambda) \d\mu_Y(\Lambda)
             = \int_{(\RR^d)^k} F\left(\begin{array}{c}
                     \vy_1\\
                     \vy_2\\
                     \vdots\\
             \vy_k\end{array}\right) \d\vy_1\d\vy_2\cdots \d\vy_k
             +\int_{\RR^d} F\left(\begin{array}{c}
                     \vy_1\\
                     \vy_1\\
                     \vdots\\ 
             \vy_1\end{array}\right) \d\vy_1
             \\
              &\hspace{0.4in}+\sum_{r=1}^{k-2} \sum_{u\in \NN} \sum_{D\in \mathfrak D^{k-1}_{r,u}} \frac {N(D,u)^d} {u^{dr}}\int_{(\RR^d)^{r+1}}
             F\left(D' \left(\begin{array}{c}
                         \vy_1\\
                         \vy_2\\
                         \vdots\\
             \vy_{r+1}\end{array}\right)\right)\d\vy_1\d\vy_2\cdots \d\vy_{r+1},
         \end{split}
     \end{equation}
    where $D'$ for $D\in \mathfrak D^{k-1}_{r,u}$ is $k\times (r+1)$ matrix defined by
    \begin{equation}\label{D':affine}
    D'=\left(\begin{array}{c|c}
                1 & 0\;\cdots\;0 \\ \hline
                \begin{array}{c}
                    1 \\
                    \vdots\\
            1\end{array} & \dfrac D u \end{array}\right).
            \end{equation}
    Here as a convention, for $k=2$, let us assume that $\sum_{r=1}^{0}$ is the empty summation.
    \end{enumerate}

    Finally, both sides of the equation \eqref{eq 1: higher moment formula: affine 1} are finite.
\end{theorem}% F}O}L}D

\begin{proof}% F{O{L{D

We first remark that the $k=1$ case is classical and can be proved using the \emph{folding and unfolding} argument. When $k=2$, the result can be  deduced from \cite[Appendix B]{EMV2015}, where the authors proved the second moment formula for $d=2$. However, their proof can be seen to work in full generality. We will therefore focus on the case when $k\ge 3$.

    Fix any fundamental domain $\mathcal F$ for $\SL_d(\ZZ)\backslash \SL_d(\RR)$. For each $g\in \mathcal F$, by the change of variables $\xi=\eta g$, we have
    \[\begin{split}
        \int_Y
        \Siegel{k}{F}(\ZZ^d g+\xi) \d\mu(g)\d\xi
&=\int_Y
\Siegel{k}{F}((\ZZ^d +\eta)g) \d\mu(g)\d\eta\\
%&=\int_{g\in \mathcal F}\int_{\xi\in [0,1)^d g}
%\sum_{\scriptsize \begin{array}{c}
%\vm_j\in \ZZ^d\\
%1\le j\le k\end{array}}
%F\left(\begin{array}{c}
%\vm_1 g+\xi\\
%\vm_2 g+\xi\\
%\vdots\\
%\vm_k g+\xi\end{array}\right)\d\xi \d\mu(g)\\
 &=\int_{\mathcal F}\int_{[0,1)^d}\sum_{\scriptsize \begin{array}{c}
         \vm_i\in \ZZ^d\\
 1\le i\le k\end{array}}
 F\left(\begin{array}{c}
         (\vm_1 +\eta)g\\ 
         (\vm_2+\eta)g\\ 
         \vdots\\
 (\vm_k +\eta)g\end{array}\right)\d\eta \d\mu(g).
\end{split}\]

For each $g\in \mathcal F$ and $\vm_1\in \ZZ^d$, put $\vy_1=(\eta+\vm_1)g$ and $\vm'_j=\vm_j-\vm_1$ for $2\le j\le k$.
Since $\bigcup_{\vm_1\in \ZZ^d}\left(\vm_1+[0,1)^d\right)= \RR^d$, the above expression is
\[\begin{split}
&=\int_{\mathcal F}
\int_{\RR^d}
\sum_{\scriptsize \begin{array}{c}
        \vm'_j\in \ZZ^d\\
2\le j\le k\end{array}}
F\left(\begin{array}{c}
        \vy_1\\
        \vy_1+\vm'_2 g\\
        \vdots\\
\vy_1+\vm'_k g\end{array}\right)\d\vy_1 \d\mu(g)\\
&=\int_{\RR^d} F\left(\begin{array}{c}
        \vy_1\\
        \vy_1\\
        \vdots\\ 
\vy_1\end{array}\right) \d\vy_1
+\int_{(\RR^d)^k} F\left(\begin{array}{c}
        \vy_1\\
        \vy_2\\
        \vdots\\
\vy_k\end{array}\right) \d\vy_1\d\vy_2\cdots \d\vy_k\\
&+\sum_{r=1}^{k-2} \sum_{u=1}^\infty \sum_{D\in \mathfrak D^{k-1}_{r,u}} \frac {N(D,u)^d} {u^{dr}}\int_{(\RR^d)^{r+1}}
F\left(D' \left(\begin{array}{c}
            \vy_1\\
            \vy_2\\
            \vdots\\
\vy_{r+1}\end{array}\right)\right)\d\vy_1\d\vy_2\cdots \d\vy_{r+1},
\end{split}\]
where $D'$ is defined as in \eqref{D':affine}
In the last equality, we applied Theorem~\ref{Rogers moment} to the function
    \[
        F' : (\vy_2, \ldots, \vy_k) \mapsto \int_{\RR^d} F\left(\vy_1, \vy_1+\vy_2, \ldots, \vy_1+\vy_k\right) \d\vy_1.
    \]

    Observe that it is enough to prove finiteness for $F \geq 0$.
    Indeed, for general $F$ finiteness for $|F|$ proves the absolute convergence of the sum in the RHS of \eqref{eq 1: higher moment formula: affine 1}.
    We note that (for $F \geq 0$) $F'$ is a compactly supported bounded positive function and hence invoking Schmidt \cite[Theorem 2]{Sch58} for this function proves our claim.

\end{proof}% F}O}L}D

% F}O}L}D
% Subsection
\subsection{Higher Moment Formulae for $Y_{\vp/q}$}\label{sec:cong}% F{O{L{D

Recall that for $\vp\in \ZZ^d \smallsetminus \{\origin\}$ and $q\in \NN_{\ge 2}$ such that $\gcd(\vp, q)=1$, we set
\[
    Y_{{\vp}/q}:=\left\{\left(\ZZ^d+\frac {\vp} q\right)g : g\in \SL_d(\RR) \right\}\subseteq Y.
\]
We remark that the space $Y_{\bm{p}/q}$ doesn't depend on $\bm{p}$ because $Y_{\bm{p}/q}$ is the space of all affine grids $L + \bm{v}$, where $L$ is an unimodular lattice in $\mathbb{R}^{d}$ and $\bm{v} \in \mathbb{R}^{d}$ is a representative torsion point of order $q$ in the torus $\mathbb{R}^{d}/L$. Indeed, one can see that for such $L + \bm{v}$, $\exists~ g \in \SL_{d}(\mathbb{R})$ such that $L = \mathbb{Z}^{d}g$, and since $q\bm{v} \in L$ we have $\bm{v} = \frac{\bm{w}g}{q}$, where $\bm{w} \in \mathbb{Z}^{d}$ and $\bm{w}$ is of order $q$ in $(\mathbb{Z}/q\mathbb{Z})^d$ (since $\bm{v}$ is of order $q$).
Therefore, $L + \bm{v} = {\left(\mathbb{Z}^{d} + \frac{\bm{w}}{q}\right)}g$.
Since $\bm{p}$ is also of order $q$ in $(\mathbb{Z}/q\mathbb{Z})^d$ and $\SL_{d}(\mathbb{Z})$ acts transitively on elements of order $q$ in $(\mathbb{Z}/q\mathbb{Z})^d$, $\exists~ \gamma \in \SL_{d}(\mathbb{Z})$ such that $\bm{w} = \bm{p}\gamma$.
Hence
\[
    L + \bm{v} = {\left(\mathbb{Z}^{d} + \frac{\bm{p}\gamma}{q}\right)}g = {\left(\mathbb{Z}^{d} + \frac{\bm{p}}{q}\right)} \gamma g.
\]

Let $\{\ve_j\}$ be the canonical basis of $\RR^d$.
Define
\[\begin{split}
    \Gamma(q)&=\left\{\gamma\in \SL_d(\ZZ): \gamma \equiv \Id_d\mod q\right\},\\
    \Gamma_1(q)&=\left\{\gamma\in \SL_d(\ZZ): \ve_1\gamma\equiv \ve_1\mod q\right\},
\end{split}\]
%%%
and $X_q=\Gamma(q)\backslash \SL_d(\RR)$.
%%%
If we choose any $\gamma_{{\vp}}\in \SL_d(\ZZ)$ for which ${\vp}=r\ve_1 \gamma_\vp$, where $r=\gcd {\vp}$, then $Y_{{\vp}/q}$ can be identified with $\gamma_{{\vp}}^{-1} \Gamma_1(q)\gamma_{\vp}\backslash \SL_d(\RR)$ (\cite[Lemma 3.1]{GKY2020}).
Denote by $\mu_q$ the Haar measure on $\SL_d(\mathbb{R})$ normalized so that $\mu_q(Y_{\bm{p}/q}) = 1$. More precisely, let $J_q=[\SL_d(\ZZ):\Gamma_1(q)]$. We can see that $\mu_q=\frac 1 {J_q} \mu$, which is independent of the choice of $\vp$.

Recall that we identify the $k$-tuple $(\RR^d)^k$ of $\RR^d$ with $\Mat_{k,d}(\RR)$. Let $\{E_{ij} : 1\le i \le k, 1\le j\le d\}$ be the standard basis for $(\RR^d)^k$, that is, the $(k,\ell)$-entry $[E_{ij}]_{k\ell}=0$ except that $[E_{ij}]_{ij}=1$.

%%%%%%%%%%%%%%%%%%%%%%%%%%%%%%%%%%%%%%%%%%%%%%%%%%%%%%%%%%%%%%%%%%%%%%
%%%%%%%%%%%%%%%%%%%%%%%%%%%%%%%%%%%%%%%%%%%%%%%%%%%%%%%%%%%%%%%%%%%%%%
%%%%%%%%%%%%%%%%%%%%%%%%%%%%%%%%%%%%%%%%%%%%%%%%%%%%%%%%%%%%%%%%%%%%%%

The Lemma below essentially follows from the definition. However we provide a proof since it is vital in setting up and proving moment formulas for congruence quotients.

\begin{lemma}\label{lemma:(D/u)Lambda_D}
For each $D\in \mathfrak D^k_{r,u}$, where $\mathfrak D^k_{r,u}$ is as in Notation~\ref{notation}, define
\[
\Lambda_{D}=\left\{\left(\begin{array}{c}
\ell_1\\ 
\vdots\\
\ell_r\end{array}\right)\in \ZZ^r : \frac D u \left(\begin{array}{c}
\ell_1\\ 
\vdots\\
\ell_r\end{array}\right)\in \ZZ^k \right\}.
\]

It follows that $\frac D u: \Lambda_D \rightarrow \frac D u \RR^r$ is injective and moreover,
\[
\frac D u \Lambda_D= \frac D u\RR^r \cap \ZZ^k.
\]
In other words, the set $\frac D u \Lambda_D$ is a primitive sublattice of $\ZZ^k$ of rank $r$, which is given by intersecting with the rational subspace $\frac D u\RR^r\subseteq \RR^k$.
\end{lemma}
\begin{proof}
    One direction as well as the injectivity is obvious. Let us show the other direction. Suppose that $\vl\in \RR^r$ satisfies that $\frac D u\vl\in \ZZ^k$. Considering indices $1\le i_1< \ldots < i_r\le k$ in Notation~\ref{notation} (2), we have that $\vl=([\frac D u \vl]^{i_1}, \ldots, [\frac D u \vl]^{i_r})\in \ZZ^r$. This proves the lemma since $\Lambda_D=\ZZ^r \cap \left(\frac D u\right)^{-1}\ZZ^k$.
\end{proof}

\begin{notation}\label{notation:cong 1}
For each $D\in \mathfrak D^k_{r,u}$, since $\Lambda_D$ defined as in Lemma~\ref{lemma:(D/u)Lambda_D} is primitive, one can find elements $\vb_{1}, \ldots, \vb_{k-r}$ in $\ZZ^k$ such that for any $\ZZ$-basis $\{\vb_{k-r+1}, \ldots, \vb_{k}\}$ of $\frac D u \Lambda$, it holds that 
\[
\ZZ^k=\ZZ\vb_1 \oplus \cdots \oplus \ZZ\vb_k.
\]

Fix such a set $\{\vb_{1}, \ldots, \vb_{k-r}\}$ for each $D\in \mathfrak D^k_{r,u}$ and denote 
\[\mathcal R(D)= \ZZ\vb_1 \oplus \cdots \oplus \ZZ\vb_{k-r}\] 
so that $\ZZ^k=\bigsqcup_{\vl\in \mathcal R(D)} \left(\vl+\frac D u \Lambda_D\right)$.
We also define the set $P_t(\mathcal R(D))$ for every $t\in \NN$ with $\gcd(t,q)=1$ as
\[
P_t(\mathcal R(D))=\{\vl\in \mathcal R(D): \gcd(\vl, t)=1\}.
\]
\end{notation}

We are now ready to formulate the higher moment formula for $Y_{\vp/q}$, based on Notation~\ref{notation:cong 1}. The formula in equation  \eqref{higher moment formula: eq (1)1} below depends on a choice of $\mathcal R(D)$ for each $\mathfrak D^k_{r,u}$. We are very grateful to the anonymous referee for providing an alternative formulation which does not involve any ad-hoc choices. This formulation can be found in Theorem \ref{higher moment formula}.  We have chosen to include both formulations because we believe that \eqref{higher moment formula: eq (1)1} is more `intrinsic' in some sense, i.e. more indicative of the proof, %than the formula in \eqref{higher moment formula: eq (1)}, 
see for instance the similarity with the second moment formula proven in \cite{GKY2020} (see also \cite[Proposition 7.6]{MS2010}). 
\begin{theorem}\label{higher moment formula 1}% F{O{L{D
    Let $d\ge 3$ and $1\le k \le d-1$.
    Let $F:(\RR^d)^k\rightarrow \RR$ be bounded and compactly supported. Then
    \begin{enumerate}
        \item For $k = 1$,
        \[
        \int_{Y_{{\vp}/q}} \Siegel{1}{F} (\Lambda) \d\mu_q(\Lambda)
%%%
=\int_{\RR^d} F\left({\vy}\right) \d\vy
\]
        \item For $2 \leq k \leq d-1$,
            \begin{equation}\label{higher moment formula: eq (1)1}
                \begin{split}
    &\int_{Y_{{\vp}/q}} \Siegel{k}{F} (\Lambda) \d\mu_q(\Lambda)
    %%%
    =\int_{(\RR^d)^k} F\left({\tp{(\vy_1, \ldots, \vy_k)}}\right) \d\vy_1 \cdots \d\vy_k\\
    %%%
    &+\int_{\RR^d} F\left({\tp{(\vy, \ldots, \vy)}}\right) d\vy
    +\hspace{-0.1in}\sum_{\scriptsize \begin{array}{c} t\in \NN \\ (t,q)=1 \end{array}}
    \hspace{-0.1in}\sum_{\scriptsize \begin{array}{c} \vl\neq \origin \\ \in \ZZ^{k-1}\end{array}}
    \int_{\RR^d}F\left(\left(\begin{array}{c}
                t\vy \\
                (t+\ell_1q)\vy \\
    \vdots \\ (t+\ell_{k-1}q)\vy\end{array}\right)\right)d\vy\\
    %%%
    &+\sum_{r=1}^{k-2} \sum_{u\in \NN} \sum_{D\in \mathfrak D^{k-1}_{r,u}}
     \left[\frac {N(D,u)^d}{u^{dr}} \int_{(\RR^d)^{r+1}} F\left(D'\left(\begin{array}{c}
    \vy_1 \\ \vdots \\ \vy_{r+1}\end{array}\right)\right) \d\vy_1 \cdots \d\vy_{r+1}\right.\\
    &\hspace{0.5in}\left. \sum_{\scriptsize \begin{array}{c} t\in \NN \\ (t,q)=1 \end{array}} \sum_{\scriptsize \begin{array}{c} \vl\in \\ P_t(\mathcal R(D))\end{array}}
        \frac{N(D,u)^d}{t^d\cdot u^{dr}}
        \int_{(\RR)^{r+1}} F\left(D'_{t,\vl}  \left(\begin{array}{c}
    \vy_1 \\ \vdots \\ \vy_{r+1}\end{array}\right)\right)d\vy_1 \cdots d\vy_{r+1}\right],\\
                \end{split}
            \end{equation}
where $D'$ and $D'_{t,\vl}$ for $D\in \mathfrak D^{k-1}_{r,u}$ and $\vl={\tp{(\ell_1, \ldots, \ell_{k-1})}}\in P_t(\mathcal R(D))$ are $k\times (r+1)$ matrices defined as follows:
\[
D'=\left(\begin{array}{c|c}
1 & 0 \, \cdots \, 0 \\
\hline
\begin{array}{c}
1 \\ \vdots \\ 1 \end{array} & \dfrac 1 u D 
\end{array}\right)
\quad\text{and}\quad
D'_{t,\vl}
=\left(\begin{array}{c|c}
t & 0 \, \cdots \, 0 \\
\hline
\begin{array}{c}
t+\ell_1q \\ \vdots \\ t+\ell_{k-1}q \end{array} & \dfrac 1 u D 
\end{array}\right)
\]
Here, if $k=2$, we will consider $\sum_{m=1}^0$ as the empty summation.
    \end{enumerate}

    Finally, both sides of the equation \eqref{higher moment formula: eq (1)1} are finite.
\end{theorem}% F}O}L}D

Notice that the right hand side of the above expression does not depend on $\vp\in \ZZ^d \smallsetminus \{\origin\}$, once $\gcd(\vp,q)=1$.\\

We need several lemmas for the proof of Theorem \ref{higher moment formula 1}. 
Let
                \[
                    H=\left\{\left(\begin{array}{cc}
                                1 & 0 \\
                    \tp{\vv'} & g' \end{array}\right) : \vv'\in \RR^{d-1}\;\text{and}\; g'\in \SL_{d-1}(\RR) \right\}
                \]
                and denote an element of $H$ by $[\vv', g']$.
                Let us identify $\SL_{d-1}(\RR)$ with the subgroup $\{[0, g']:g'\in \SL_{d-1}(\RR)\}$ of $H$. One can define the Haar measure $\mu_H$ on $H$ by the product of $\mu'$ and the Lebesgue measure on $\RR^{d-1}$, where $\mu'$ is the Haar measure such that $\mu'(X_{d-1})=1$.

                Notice the difference between $H$ and $\ASL_{d-1}(\RR)$. For instance, a fundamental domain of $(\SL_d(\ZZ)\cap H)\backslash H$ is given by $[0,1)^{d-1} \times \mathcal F_{d-1}$, where $\mathcal F_{d-1}$ is a fundamental domain of $\SL_{d-1}(\ZZ)\backslash \SL_{d-1}(\RR)$,
                whereas that of $\ASL_{d-1}(\ZZ)\backslash \ASL_{d-1}(\RR)$ is given by 
                \[\left\{[\xi'g', g']: g'\in \mathcal F_{d-1}\;\text{and}\;\xi' \in [0,1)^{d-1}\right\}.\]
                
                %%%%%%%%%%%%%%%%%%%%%%%%%%%%%%%%%
                \begin{proposition}\label{MS Lemma 7.7}
                    Let $F:(\RR^d)^k \rightarrow \RR_{\ge 0}$, where $d\ge 3$ and $1\le k \le d-2$, be a bounded and compactly supported function.
                    Suppose that $\xi=(z_1, \xi')\in \RR^d$ with $z_1\in \RR$ and $\xi'\in \ZZ^{d-1}$. Then,
                    \[\begin{split}
&\int_{\SL_d(\ZZ)\cap H\backslash H}
\Siegel{k}{F}\left((\ZZ^d+\xi)g\right)\d\mu_H(g)\\
&=\sum_{\ell_1, \ldots, \ell_k\in \ZZ} F\left(\sum_{i=1}^k (z_1+\ell_i)E_{i1}\right)\\
%%%
&\hspace{0.2in}+\sum_{r=1}^k \sum_{u\in\NN} \sum_{D\in \mathfrak D^k_{r,u}}
\sum_{\scriptsize \begin{array}{c}
        {\tp{(\ell_1, \ldots, \ell_k)}} \\
\in \mathcal R(D)\end{array}}\\
&\hspace{0.5in}\frac {N(D,u)^{d}} {u^{dr}}\int_{(\RR^d)^r}
F\left(\sum_{i=1}^k (z_1+\ell_i)E_{i1}+
    \frac {D} {u} \left(\begin{array}{c}
            \vx_1 \\
            \vdots\\
\vx_r \end{array}\right)\right)\d\vx_1 \cdots \d\vx_r.
                    \end{split}\]
%where $I_q^{(d-1)}=[\SL_{d-1}(\ZZ):\SL_{d-1}(\ZZ)\cap \Gamma(q)]$.
                \end{proposition}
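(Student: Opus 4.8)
The plan is to follow the strategy of Section 7 of Marklof--Str\"{o}mbergsson, adapting it to the present inhomogeneous/congruence setting, and then to reduce to Rogers's formula (Theorem \ref{Rogers moment}) in dimension $d-1$. First I would record that, since $\xi = (z_1, \xi')$ with $\xi' \in \ZZ^{d-1}$, the translate $\ZZ^d + \xi$ is a coset of $\ZZ^d$ by a vector whose last $d-1$ coordinates are integral; hence for $g = [\vv', g'] \in H$ one has $(\ZZ^d + \xi)g = \ZZ^d g + (z_1, 0, \ldots, 0)g + (\text{integral vector})g$, and the integral part can be absorbed into $\ZZ^d g$. Thus $\Siegel{k}{F}((\ZZ^d+\xi)g)$ depends on $\xi$ only through $z_1 \bmod 1$ and, more usefully, it equals a sum over $\vm_1,\dots,\vm_k \in \ZZ^d$ of $F$ evaluated at $(\vm_i + z_1\ve_1)g$. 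Splitting each $\vm_i = (\ell_i, \vm_i')$ with $\ell_i \in \ZZ$, $\vm_i' \in \ZZ^{d-1}$, and using that $\ve_1 g = \ve_1$ for $g \in H$ together with the block-lower-triangular shape of $H$, the first coordinate of $(\vm_i + z_1\ve_1)g$ is exactly $\ell_i + z_1$ plus a contribution from $\vm_i'$ through $\vv'$; a change of variables in the $\RR^{d-1}$-integration over $\vv'$ will let me treat the $\vm_i'$-sum as an honest lattice-point sum for the lattice $\ZZ^{d-1} g'$ in $\RR^{d-1}$.

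Next I would carry out the integration over $H = \SL_{d-1}(\RR) \ltimes \RR^{d-1}$ in two stages, matching $\d\mu_H = \d\mu' \, \d\vv'$. Integrating first over the $\vv' \in \RR^{d-1}$ variable (which, after the change of variables above, ranges over a fundamental domain translated around all of $\RR^{d-1}$ as $\vm_1'$ varies) collapses the $\vm_1'$-sum to a Lebesgue integral over $\RR^{d-1}$ and shifts $\vm_j' \mapsto \vm_j' - \vm_1'$ for $j \ge 2$; this is the exact analogue of the reduction performed in the proof of Theorem \ref{higher moment formula: affine}, but now in dimension $d-1$. What remains is an integral over $(\SL_{d-1}(\ZZ) \cap \Gamma(q))\backslash \SL_{d-1}(\RR)$ of a $(k-1)$-fold (in the $\vm_j'$, $j\ge 2$) Siegel-type transform, together with the leftover discrete sum over $\ell_1,\dots,\ell_k \in \ZZ$ which produces the first term $q^{d-1} I_q^{(d-1)} \sum_{\ell_i} F(\sum_i (z_1+\ell_i)E_{i1})$ (the factor $q^{d-1} I_q^{(d-1)}$ being the covolume ratio $[\SL_{d-1}(\ZZ):\SL_{d-1}(\ZZ)\cap\Gamma(q)]$ times the index coming from the $\RR^{d-1}$ direction). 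To evaluate the integral over the congruence quotient I would use that $\mu'$ restricted there is $I_q^{(d-1)}$ times the unfolding to $\SL_{d-1}(\ZZ)\backslash\SL_{d-1}(\RR)$, and then apply Rogers's Theorem \ref{Rogers moment} (equivalently Proposition \ref{Rogers component wise}) to the function $(\vx_1,\dots,\vx_{k}) \mapsto \int_{\RR^{d-1}} F(\dots)\,$, summing over $r$, $u$, and $D \in \mathfrak{D}^k_{r,u}$. The sets $\mathcal R^k(D,u)$ enter precisely as the set of representatives for $\ZZ^k / \tfrac{D}{u}\Lambda_D$ needed to account for the fact that the $\ell_i$ (the first coordinates) are genuinely free integers while the remaining coordinates are constrained by the admissible matrix $D$; bookkeeping this decomposition carefully is what produces the stated double sum.

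The main obstacle will be the careful tracking of the $\ZZ^{d-1}$-integrality of $\xi'$ through the block-triangular action of $H$, and in particular disentangling which lattice-point sums in the $d$-dimensional picture descend to honest $(d-1)$-dimensional Siegel transforms versus which ones split off as the free discrete sum over $\ell_1,\dots,\ell_k$; the shift $\vm_j \mapsto \vm_j - \vm_1$ mixes the first-coordinate labels $\ell_i$ with the lattice $D$-structure, and one must check that the representative sets $\mathcal R^k(D,u)$ are exactly what is left after this shift and after folding the $\RR^{d-1}$-translation variable $\vv'$. A secondary technical point is verifying the index/covolume constants $q^{d-1} I_q^{(d-1)}$: the $q^{d-1}$ comes from the fact that $(\ZZ^d + \xi)$ with $\xi' \in \ZZ^{d-1}$ but $z_1$ arbitrary is, after the $H$-action, commensurable with $\ZZ^d$ with a controlled index in the last $d-1$ coordinates, and one must confirm this matches the normalization $\mu_H = \mu' \times \mathrm{Leb}$ with $\mu'(\mathcal X_{d-1}) = 1$. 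Once these identifications are pinned down, the rest is a direct application of Theorem \ref{Rogers moment} and the elementary manipulation of Euler products defining $N_D(\vl,t)$, which I would defer to the subsequent lemmas.
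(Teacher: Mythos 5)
There is a genuine gap in your second paragraph, and it stems from conflating the group $H$ with $\ASL_{d-1}(\RR)$ --- a distinction the paper explicitly flags. For $[\vv',g']\in H$ acting on a row vector, $(x_1,\vx')\mapsto (x_1+\vx'\,\tp{\vv'},\ \vx' g')$: the variable $\vv'$ is a \emph{shear} that adds the scalar $\vm'_i\cdot\vv'$ to the first coordinate of each lattice point, not a translation of the $(d-1)$-dimensional lattice $\ZZ^{d-1}g'$. Consequently your proposed step --- integrating over $\vv'$ to ``collapse the $\vm_1'$-sum to a Lebesgue integral over $\RR^{d-1}$ and shift $\vm_j'\mapsto\vm_j'-\vm_1'$, exactly as in the affine case'' --- does not apply: varying $\vm'_1$ over $\ZZ^{d-1}$ and $\vv'$ over $[0,q)^{d-1}$ does not tile anything in the $\RR^{d-1}$-coordinates, and points with $\vm'_i=0$ are not moved by $\vv'$ at all. (This last fact is precisely why the purely discrete term $\sum_{\ell_1,\dots,\ell_k}F(\sum_i(z_1+\ell_i)E_{i1})$ appears; it is the contribution of the configuration with all $\vm'_i=0$, not a ``leftover'' after an affine unfolding.)

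The missing idea is the rank dichotomy on which the paper's proof turns. One first applies the Rogers partition $(\ZZ^d)^k\smallsetminus\{0\}=\bigsqcup \frac{D}{u}\Phi^{(d)}(D,u)$ in dimension $d$, writes each $\vn_j=(\ell'_j,\vn'_j)$, and observes that the $(d-1)$-dimensional projections $\tp{(\vn'_1,\dots,\vn'_r)}$ have rank either $r$ or $r-1$; this yields the decomposition
\[
\Phi^{(d)}(D,u)=\bigl(\Lambda_D\times\Phi^{(d-1)}(D,u)\bigr)\cup\bigcup_{w}\bigcup_{C\in\mathfrak D^k_{r-1,w}(D)}\bigl((\Lambda_D-\Lambda_D(C))\times\Phi^{(d-1)}(C,w)\bigr)
\]
of Lemma~\ref{Lemma of MS Lemaa 7.7}. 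In the full-rank case the $\vv'$-integral is evaluated by choosing a nonsingular $r\times r$ minor of $N=\tp{(\vn'_1,\dots,\vn'_r)}$, so that $\vv'\mapsto N\tp{\vv'}$ sweeps out $\RR^r$ and converts the $\Lambda_D$-sum of first coordinates into $\int_{\RR^r}$, producing the factor $q^{d-1}N(D,u)/u^r$; Proposition~\ref{Rogers component wise} in dimension $d-1$ then supplies $N(D,u)^{d-1}/u^{(d-1)r}$. The rank-drop case is what generates the discrete sums over $\mathcal R^k(D,u)$, via the coset decomposition $\Lambda_D-\Lambda_D(C)=\bigsqcup_s(\vl_s+\Lambda_D(C))$. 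Your outline never isolates this dichotomy, and without it neither the first term nor the $\mathcal R^k(D,u)$-sums can be produced; the route through the affine unfolding would instead yield a formula of the shape of Theorem~\ref{higher moment formula: affine}, which is not what is being proved.
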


%%%%%%%%%%%%%
Note that $H$ is the isotropy subgroup of $\ve_1$ in $\SL_d(\RR)$.
%%%
We will compute the integral $\int_{\SL_d(\ZZ)\cap H \backslash H} \Siegel{k}{F} \d\mu_H$ in two steps: we first process the integrals associated to the first column in $(\RR^d)^k\simeq \Mat_{k,d}(\RR)$, and then apply Theorem~\ref{Rogers moment} to the integrals associated to the remaining columns.
For this, we need the lemma below which describes the relation between the primitive sublattice $\frac D u \Lambda_D$ of $\ZZ^k$ for $D\in \mathfrak D^k_{r,u}$ and its sublattice $\frac C w \Lambda_C$ for some $C\in \mathfrak D^k_{r-1,w}$.

%%%%%%%%%%%%%
\begin{lemma}\label{Lemma of MS Lemaa 7.7}
Recall Notation~\ref{notation:cong 1}. Let $D\in \mathfrak D^k_{r,u}$ with $r\ge 2$.
\begin{enumerate}[(a)]
\item For $1\le j_0\le r$ and $a_1, \ldots, a_{j_0-1}\in \QQ$, define $C_1\in \Mat_{k,r-1}(\QQ)$ by
%%%
\[
\left[C_1\right]^j
=\left\{\begin{array}{cl}
[D/u]^j +a_j[D/u]^{j_0} & \text{for } 1\le j<j_0; \\[0.05in]
{[D/u]^{j+1}} & \text{for } j_0\le j \le r-1.
\end{array}\right.
\] 
%%%
Let $w\in \NN$ be the least common denominator of $C_1$ and $C:=wC_1\in \mathfrak D^k_{r-1,w}$.
Let $\mathfrak D^{k}_{r-1,w}(D)$ be the collection of such matrices $C$. There is a one-to-one correspondence between 
\[
\bigcup_{w\in \NN}\mathfrak D^k_{r-1,w}(D)
\;\text{and}\;\left\{\text{$(r-1)$-dimensional rational subspaces in $D\RR^r$}\right\}.
\]

\item
For each $C\in \mathfrak D^k_{r-1,w}(D)$, define
%%%
\[
\Lambda_D(C)=\left\{\tp{(\ell_1, \ldots, \ell_r)} \in \Lambda_D : a_1 \ell_1 + \cdots + a_{j_0-1}\ell_{j_0-1}=\ell_{j_0} \right\}.
\]

%%%
Then there is a natural isomorphism from $\Lambda_C$ to $\Lambda_D(C)$ so that 
$$\frac D u \Lambda_D(C)= \frac C w \Lambda_C \subseteq \frac D u \Lambda_D \subseteq \ZZ^k.$$ 

%%%
For each such pair $(D,C)$, 
%%%
one can choose (and fix from now on)
%%%
an element $\vb_D\in \Lambda_D-\Lambda_D(C)$ so that if we let $\mathcal R_D(C)=\ZZ \vb_D \smallsetminus \{\origin\}$, then it holds that
\[
\Lambda_D-\Lambda_D(C)=\bigsqcup_{\vl\in \mathcal R_D(C)} \left(\vl + \Lambda_D(C)\right).
\]

\item For a given $C\in \mathfrak D^k_{r-1,w}$, let $D_1\in \mathfrak D^k_{r,u_1}$ and $D_2\in \mathfrak D^k_{r,u_2}$ be such that $D_1\neq D_2$ and $C \in \mathfrak D^k_{r-1,w}(D_1) \cap \mathfrak D^k_{r-1,w}(D_2)$. Then
%Let $\mathcal R_{D_i}(C)=\{\tau^{(i)}_s\}_{s=0}^\infty$ for $i=1,2$. Then
\[
\frac {D_1} {u_1} \mathcal R_{D_1}(C)\cap \frac {D_2} {u_2} \mathcal R_{D_2}(C)=\emptyset.
\]
Hence, for any $\vl_1 \in \frac {D_1} {u_1} \mathcal R_{D_1}(C)$ and $\vl_2\in \frac {D_2} {u_2} \mathcal R_{D_2}(C)$, it follows that \[
\left(\vl_1 + \frac C w \Lambda_C\right) \cap \left(\vl_2 + \frac C w \Lambda_C\right) = \emptyset.
\]
\item For a given $C\in \mathfrak D^k_{r-1,w}$, one can choose $\mathcal R(C)$ in Notation~\ref{notation:cong 1} to be

\[\mathcal R(C)=\left\{{\tp{(0,\ldots, 0)}}\right\} \sqcup \bigsqcup_{u\in \NN} \bigsqcup_{\scriptsize \begin{array}{c} 
D\in \mathfrak D^k_{r,u}\;\text{s.t.}\\
C\in \mathfrak D^k_{r-1,w}(D)\end{array}}\frac D u \mathcal R_D(C)
\]
and vice versa.
%Conversely, any given $\mathcal R(C)$ for $C\in \mathfrak D^k_{r-1,w}$ and $D\in \mathfrak D^k_{r,u}$,
%one can choose $\mathcal R_D(C)$ by the preimage of $\left(\mathcal R(C) \cap \frac D u \RR^r\right) \smallsetminus \{{\tp{(0,\ldots, 0)}}\}$.
\end{enumerate}
\end{lemma}
\begin{proof}
(a) One way is obvious from its construction. Suppose that $W\subseteq D\RR^r$ is a codimension-one rational subspace of $D\RR^r$. Then there is $C\in \mathfrak D^{k}_{r-1,w}$ so that $\frac C w \RR^{r-1}=W$. We want to show that $C\in \mathfrak D^k_{r-1,w}(D)$. Pick any $\tp{(\vm_1, \ldots, \vm_k)}\in (\RR^d)^k\in \frac C w (\RR^d)^k \cap (\ZZ^d)^k$ with rank $(r-1)$ and $\tp{(\vm'_1, \ldots, \vm'_r)}$ and $\tp{(\vm''_1, \ldots, \vm''_{r-1})}$ be such that 
\[
\frac D u \left(\begin{array}{c}
\vm'_1\\
\vdots \\
\vm'_r\end{array}\right)= \left(\begin{array}{c}
\vm_1\\
\vdots \\
\vm_k \end{array}\right)= \frac C w \left(\begin{array}{c}
\vm''_1 \\
\vdots \\
\vm''_{r-1}\end{array}\right).
\]

Let $I_D=\{i_1<\ldots<i_r\}$ be as in Notation~\ref{notation} (3). Since $\rk{\tp{(\vm_1, \ldots, \vm_k)}}=r-1$, and by definition of $D$ and $C$, there is $1\le j_0\le r$ for which
\[\begin{gathered}
\vm'_1=\vm_{i_1}=\vm''_1, \;\ldots,\;\vm'_{j_0-1}=\vm_{i_{j_0-1}}=\vm''_{j_0-1};\\
\vm'_{j_0}=a_1\vm'_1+\cdots+a_{j_0-1}\vm'_{j_0-1}\;\text{for some}\; a_1, \ldots, a_{j_0-1}\in \QQ;\\
\vm'_{j_0+1}=\vm_{i_{j_0+1}}=\vm''_{j_0}, \;\ldots,\; \vm'_{r}=\vm_{i_r}=\vm''_{r-1}.
\end{gathered}\]
It is easily seen that $C_1$ constructed from $D$ with $j_0$ and $a_1, \ldots, a_{j_0-1}\in \QQ$ in Property (a) is equal to $C$.

\vspace{0.1in}
%%%%%%%%%%%%%
\noindent (b) It is obvious from the definition that $C\in \mathfrak D^k_{r-1,w}$. The map 
\[\begin{split}
{\tp{(\ell_1, \ldots, \ell_{r-1})}} \mapsto
{\tp{(\ell_1, \ldots, \ell_{j_0-1}, a_1 \ell_1 + \cdots + a_{j_0-1}\ell_{j_0-1}, \ell_{j_0}, \ldots, \ell_{r-1})}}
%\left(\begin{array}{c}
%\ell_1 \\
%\vdots \\
%\ell_{r-1}\end{array}\right) \in \Lambda_C
%\mapsto 
%\left(\begin{array}{c}
%\ell_1 \\
%\vdots \\
%\ell_{j_0-1}\\
%a_1 \ell_1 + \cdots + a_{j_0-1}\ell_{j_0-1} \\
%\ell_{j_0} \\
%\vdots \\
%\ell_{r-1}\end{array}\right) 
\end{split}\]
gives an isomorphism from $\Lambda_C$ to $\Lambda_D(C)$ and by definition, 
\[\frac C w {\tp{(\ell_1, \ldots, \ell_{r-1})}}= \frac D u {\tp{(\ell_1, \ldots, \ell_{j_0-1}, a_1 \ell_1 + \cdots + a_{j_0-1}\ell_{j_0-1}, \ell_{j_0}, \ldots, \ell_{r-1})}} \in \ZZ^k.
\]

Recall Lemma \ref{lemma:(D/u)Lambda_D}. Since $\frac C w \Lambda_C$ is primitive, there is an element $\vb\in \frac D u \Lambda_D$ for which $\frac D u \Lambda_D=\frac C w \Lambda_C\oplus \ZZ\vb$. Set $\vb_D:=\left(\frac D u\right)^{-1}\vb$.

\vspace{0.1in}
%%%%%%%%%%%%%%
\noindent (c) 
Let $\mathcal R_{D_i}(C)$ be generated by $\vb_{D_i}$ for $i=1,2$.
From the fact that $\frac {D_1} {u_1} \RR^r \cap \frac {D_2} {u_2} \RR^r = \frac {C} {w} \RR^{r-1}$, in other words,
\[
\frac {D_i}{u_i}\RR^r=\frac C w \RR^{r-1}\oplus \RR \left(\frac {D_i}{u_i}\vb_{D_i}\right)\;(i=1,2),
\]
it is obvious that $\frac {D_1}{u_1}\mathcal R_{D_1}(C) \cap \frac {D_2}{u_2}\mathcal R_{D_2}(C)=\emptyset$.
Moreover, for any $\vl_i\in \frac {D_i}{u_i}$, where $i=1,2$, 
\[\vl_i+\frac C w \Lambda_C\subseteq \vl_i+\frac C w \RR^{r-1}
\]
which are affine subspaces of $\frac C w \RR^{r-1}$ lying on $\frac {D_i} {u_i} \RR^r-\frac C w \RR^{r-1}$ for $i=1,2$, respectively, hence they are disjoint.

%follows from the facts that
%\[
%\frac {D_1} {u_1} \RR^r \cap \frac {D_2} {u_2} \RR^r = \frac {C} {w} \RR^{r-1}
%{\color{blue}\quad\text{and}\quad
%\frac D u \Lambda_D(C)=\frac C w \Lambda_C=\frac C w \RR^{r-1}\cap \ZZ^k}
%\]
%{\color{blue}
%by Lemma~\ref{lemma:(D/u)Lambda_D}. From the definitions of $\mathcal R_{D_i}(C)$, $i=1,2$, we have observations that
%\[\begin{gathered}
%\frac {D_i}{u_i} \mathcal R_{D_i}(C) \subseteq 
%\frac {D_i}{u_i} \RR^r - \frac C w \RR^{r-1}\;\text{so that}\\
%\frac {D_i} {u_i} \vl_i + \frac C w \Lambda_C \subseteq 
%\frac {D_i} {u_i} \vl_i + \frac C w \RR^{k-1} \subseteq %\frac {D_i}{u_i} \RR^{r} - \frac C w \RR^{r-1}\; (i=1,2),
%\end{gathered}\]
%which leads (c).
%}
%{\color{red} In the report, mention that the result follows from the fact that the sets laid on the affine subspaces of $\frac C w \RR^{r-1}$ in $\frac {D_i} {u_i}\RR^r$, respectively, they are all disjoint.}

\vspace{0.1in}
%%%%%%%%%%%%%%
\noindent To deduce (d) from (c), it suffices to show that
\[
\frac C w \Lambda_C + \Bigg(\left\{{\tp{(0,\ldots, 0)}}\right\} \sqcup \bigsqcup_{u\in \NN} \bigsqcup_{\scriptsize \begin{array}{c} 
D\in \mathfrak D^k_{r,u}\\
C\in \mathfrak D^k_{r-1,w}(D)\end{array}}\frac D u \mathcal R_D(C)\Bigg)
=\frac C w \Lambda_C + \mathcal R(C).
\]

Let $\vl \in \mathcal R(C)$ be given. Since $\frac C w \RR^{r-1} \oplus \RR\vl$ is a rational subspace of rank $r$, there are $u\in \NN$ and $D\in \mathfrak D^k_{r,u}$, which are uniquely determined, such that $\frac C w \RR^{r-1} \oplus \RR \vl=\frac D u \RR^r$.
It is obvious that $\vl \in \frac D u \Lambda_D - \Lambda_D(C)$, hence there is $\vl'\in \frac D u\mathcal R_D(C)$ so that
$$\vl\in \vl'+\frac D u\Lambda_D(C)=\vl'+ \frac C w \Lambda_C,$$
as asserted in the claim.
\end{proof}     
                
%%%%%%                
\begin{proof}[Proof of Proposition~\ref{MS Lemma 7.7}]
Fix a fundamental domain $\mathcal F'$ of $\SL_{d-1}(\ZZ)\backslash \SL_{d-1}(\RR)$ so that $\mathcal F'\times [0,1)^{d-1}$ is a fundamental domain of $(\SL_d(\ZZ)\cap H)\backslash H$.
%Note that $\mu'(\mathcal F')=I_q^{(d-1)}$.

Recall that $(\ZZ^d)^k \smallsetminus \{{\tp{(\origin, \ldots, \origin)}}\}$ is partitioned into
                    $
                        \bigsqcup_{r=1}^k
                        \bigsqcup_{u\in\NN}
                        \bigsqcup_{D\in \mathfrak D^k_{r,u}}
                        \frac {D} u \Phi^{(d)}(D,u),
%                        \left\{\frac 1 u D \left(\begin{array}{c}
%                                    \vn_1\\
%                                    \vdots\\
%                            \vn_r\end{array}\right) :
%                           \left(\begin{array}{c}
%                                    \vn_1\\
%                                    \vdots\\
%                            \vn_r\end{array}\right)\in \Phi^{(d)}(D,u)
%                       \right\},
                    $
                    where $\mathfrak D^k_{r,u}$ and $\Phi^{(d)}(D,u)$ are as in Notation~\ref{notation}.

By taking $g=[\vv',g']$ %and renaming $\vm'_i+\xi'$ by $\vm'_i$,
and from Rogers' formula, we have that 
                    \[\begin{split}
&\int_{(\SL_d(\ZZ)\cap H)\backslash H}
\Siegel{k}{F} \left((\ZZ^d+\xi)[\vv',g']\right)\d\mu_{H}([\vv',g']) \\
&=\int_{\mathcal F'\times [0,1)^{d-1}}\hspace{-0.18in}
\sum_{\scriptsize \begin{array}{c}
        \ell_i\in \ZZ\\
        \vm'_i\in \ZZ^{d-1}\\
        1\le i \le k
\end{array}}\hspace{-0.18in}
F\left(\begin{array}{c|c}
(\ell_1+z_1)+\vm'_1{\tp{\vv'}} & \vm'_1g' \\
\vdots & \vdots \\
(\ell_k+z_1)+\vm'_k{\tp{\vv'}} & \vm'_kg'
\end{array}
\right)\d\vv'\d\mu'(g')\\
&= F\left(\begin{array}{c|c}
z_1 & 0, \ldots, 0 \\
\vdots & \vdots \\
z_1 & 0, \ldots, 0\end{array}\right)
+\sum_{r=1}^k \sum_{u\in \NN} \sum_{D\in \mathfrak D^k_{r,u}}\int_{\mathcal F'\times [0,1)^{d-1}} \sum_{\scriptsize \begin{array}{c}
        {\tp{(\vn_1, \ldots, \vn_r)}}\\
\in \Phi^{(d)}(D,u)\end{array}}\\
&\hspace{1in} F\left(\begin{array}{c|c}
\left(\begin{array}{c}
z_1 \\ \vdots \\ z_1\end{array}\right)+ \dfrac D u \left(\begin{array}{c} \ell'_1 \\ \vdots \\ \ell'_r\end{array}\right)+ \dfrac D u \left(\begin{array}{c}
\vn'_1 \\ \vdots \\ \vn'_r\end{array}\right){\tp{\vv'}}
& \dfrac D u \left(\begin{array}{c}
\vn'_1 \\ \vdots \\ \vn'_r\end{array}\right)g'
\end{array}\right) \d\vv'\d\mu'(g),
                    \end{split}\]
where $\vn_j=(\ell'_j, \vn'_j)$ for $1\le j\le r$.

%%%%%%%%%%%
Now, let $D\in \mathfrak D^k_{r,u}$ be given. 
For each ${\tp{((\ell'_1, \vn'_1), \ldots, (\ell'_r, \vn'_r))}}\in \Phi^{(d)}(D,u)$, the rank of ${\tp{(\vn'_1, \ldots, \vn'_r)}}$ is either $r$ or $r-1$. 

Assume that $r\ge 2$. It is easy to verify that
%%%
\[\begin{split}
&\Phi^{(d)}(D,u)
=\left(\Lambda_D\times \Phi^{(d-1)}(D,u)\right)
\sqcup \\
&\hspace{0.1in}\bigsqcup_{w\in \NN} \bigsqcup_{C\in \mathfrak D^k_{r-1,w}(D)}
\hspace{-0.1in}(\Lambda_D-\Lambda_D(C))\times 
\left\{{\small\left(\begin{array}{c}
\vn'_1 \\
\vdots \\
\sum_{k=1}^{j_0-1} a_k\vn'_k \\
\vdots \\
\vn'_{r-1}\end{array}\right)_{r\times(d-1)}}: \left(\begin{array}{c}
\vn'_1 \\
\vdots \\
\vn'_{r-1}\end{array}\right)\in\Phi^{(d-1)}(C,w)\right\},
\end{split}\]
where $\mathfrak D^k_{r-1,w}(D)$ and $\Lambda_D(C)$ are as in Lemma~\ref{Lemma of MS Lemaa 7.7}.

Let us first compute the following integral
\begin{equation}\label{eq 1: new pf}\begin{split}
&\int_{\mathcal F'\times [0,1)^{d-1}} 
%%%
\sum_{\scriptsize \begin{array}{c}
{\tp{(\ell'_1, \ldots, \ell'_r)}}\\
\in \Lambda_D\end{array}}
%%%
\sum_{\scriptsize \begin{array}{c}
        {\tp{(\vn'_1, \ldots, \vn'_r)}}\\
\in \Phi^{(d-1)}(D,u)\end{array}}\\
%%%
&\hspace{0.4in} F\left(\begin{array}{c|c}
\left(\begin{array}{c}
z_1 \\ \vdots \\ z_1\end{array}\right)+ \dfrac D u \left(\begin{array}{c} \ell'_1 \\ \vdots \\ \ell'_r\end{array}\right)+ \dfrac D u \left(\begin{array}{c}
\vn'_1 \\ \vdots \\ \vn'_r\end{array}\right){\tp{\vv'}}
& \dfrac D u \left(\begin{array}{c}
\vn'_1 \\ \vdots \\ \vn'_r\end{array}\right)g'
\end{array}\right) \d\vv'\d\mu'(g').
\end{split}\end{equation}

Fix $g'\in \mathcal F'$ and $N:={\tp{(\vn'_1, \ldots, \vn'_r)}}\in \Phi^{(d-1)}(D,u)$. Set $J_N=\{1\le j_1<\ldots<j_r\le d-1\}$ such that 
$
N^{J_N}:=\left([N]^{j_1}, \ldots, [N]^{j_r}\right)
$
has a nonzero determinant. Denote by
                    \[
                        N\:{\tp{\vv'}}= N^{J_N}\:{\tp{\vv'_{J_N}}} + N^{J_N^c}\:{\tp{\vv'_{J_N^c}}},
                    \]
                    where $\vv'_{J_N}=(v_j)_{j\in J_N}\in \RR^r$ and $\vv'_{J_N^c}=(v_i)_{i\in J_N^c}\in \RR^{(d-1)-r}$.
Define
\[
G\left(\begin{array}{c}
x_1 \\ \vdots \\ x_r\end{array}\right)
:= \sum_{\scriptsize \begin{array}{c}
{\tp{(\ell'_1, \ldots, \ell'_r)}}\\
\in \Lambda_D\end{array}}
F\left(\begin{array}{c|c}
\left(\begin{array}{c} 
z_1 \\ \vdots \\ z_1\end{array}\right)
+\dfrac D u \left(\begin{array}{c} 
\ell'_1 \\ \vdots \\ \ell'_r\end{array}\right)
+\dfrac D u \left(\begin{array}{c} 
x_1 \\ \vdots \\ x_r\end{array}\right)
& \dfrac D u N g'
\end{array}\right).
\]
Obviously, $G$ is $\Lambda_D$-invariant so that it is $u\ZZ^r$-invariant, and $G(N\:\cdot)$ is $\ZZ^{d-1}$-invariant.  We want to compute the integral
\[
\int_{[0,1)^{d-1}} G\left(N\left(\begin{array}{c}
v_1 \\ \vdots \\ v_{d-1} \end{array}\right)\right) \d v_1 \cdots \d v_{d-1}
=\frac {1} {u^{d-1}} \int_{[0,u)^{d-1}}
G\left(N^{J_N} \vv^{J_N}+ N^{J_N^c} \vv^{J_N^c}\right) \d\vv^{J_N} \d\vv^{J_N^c}. 
\]
By the change of variables 
\[
\vv^{J_N} \mapsto N^{J_N} \vv^{J_N}+ N^{J_N^c} \vv^{J_N^c}={\tp{(x_1, \ldots, x_r)}},
\]
the above integral is
\[\begin{split}
&=\frac {1}{u^{d-1}} \int_{[0,u)^{d-1-r}}\int_{N^{J_N}[0,u)^r+N^{J_N^c}\vv^{J_N^c}}
G\left(\begin{array}{c}
x_1 \\ \vdots \\ x_r \end{array}\right) |\det N^{J_N}|^{-1} \d x_1 \cdots \d x_r \d\vv^{J_N^c}\\
&=\frac {1}{u^{d-1}} \int_{[0,u)^{d-1-r}}\int_{N^{J_N}[0,u)^r}
G\left(\begin{array}{c}
x_1 \\ \vdots \\ x_r \end{array}\right) |\det N^{J_N}|^{-1} \d x_1 \cdots \d x_r \d\vv^{J_N^c}\\
&=\frac {1} {u^r} \int_{[0,u)^r}
G\left(\begin{array}{c}
x_1 \\ \vdots \\ x_r \end{array}\right) \d x_1 \cdots \d x_r.
\end{split}\]

Let $\mathcal F_{\Lambda_D}$ be a fundamental domain for $\Lambda_D$ in $[0,u)^r$. Since $[0,u)^r$ is an $N(D,u)$-covering of $\mathcal F_{\Lambda_D}$, it follows that
\[\begin{split}
&\int_{[0,1)^{d-1}} G\left(N\left(\begin{array}{c}
v_1 \\ \vdots \\ v_{d-1} \end{array}\right)\right) \d v_1 \cdots \d v_{d-1}\\
%%%
&=\frac {N(D,u)} {u^r} 
\int_{\mathcal F_{\Lambda_D}} 
\sum_{\scriptsize \begin{array}{c}
{\tp{(\ell'_1, \ldots, \ell'_r)}}\\
\in \Lambda_D\end{array}}
F\left(\begin{array}{c|c}
\left(\begin{array}{c} 
z_1 \\ \vdots \\ z_1\end{array}\right)
+\dfrac D u \left(\begin{array}{c} 
x_1+\ell'_1 \\ \vdots \\ x_r+\ell'_r\end{array}\right)
& \dfrac D u N g'
\end{array}\right)\d x_1 \cdots \d x_r\\
%%%
&= \frac {N(D,u)} {u^r}
\int_{\RR^r} 
F\left(\begin{array}{c|c}
\left(\begin{array}{c} 
z_1 \\ \vdots \\ z_1\end{array}\right)
+\dfrac D u \left(\begin{array}{c} 
x_1\\ \vdots \\ x_r\end{array}\right)
& \dfrac D u N g'
\end{array}\right) \d x_1 \cdots \d x_r.
\end{split}\] 

Therefore, applying Proposition~\ref{Rogers component wise}, the integral \eqref{eq 1: new pf} is
\[\begin{split}
&\frac {N(D,u)} {u^r}
\int_{\SL_{d-1}(\ZZ)\backslash \SL_{d-1}(\RR)}
\sum_{\scriptsize \begin{array}{c}
        {\tp{(\vn'_1, \ldots, \vn'_r)}}\\
\in \Phi^{(d-1)}(D,u)\end{array}}\\
%%%
&\hspace{1in}\int_{\RR^r} F\left(\begin{array}{c|c}
\left(\begin{array}{c} 
z_1 \\ \vdots \\ z_1\end{array}\right)
+\dfrac D u \left(\begin{array}{c} 
x_1\\ \vdots \\ x_r\end{array}\right)
& \dfrac D u \left(\begin{array}{c}
\vn'_1 \\ \vdots \\ \vn'_r\end{array}\right)g'
\end{array}\right) \d x_1 \cdots \d x_r \d\mu'(g')\\
%%%
&=\frac {N(D,u)} {u^r} \cdot \frac {N(D,u)^{d-1}} {u^{(d-1)r}}\\
&\hspace{0.5in}\int_{(\RR^{d-1})^r}\int_{\RR^r}
F\left(\begin{array}{c|c}
\left(\begin{array}{c} 
z_1 \\ \vdots \\ z_1\end{array}\right)
+\dfrac D u \left(\begin{array}{c} 
x_1\\ \vdots \\ x_r\end{array}\right)
& \dfrac D u \left(\begin{array}{c}
\vx'_1 \\ \vdots \\ \vx'_r\end{array}\right)
\end{array}\right) \d x_1 \cdots \d x_r \d \vx'_1 \cdots \d \vx'_r\\
%%%
&= \frac {N(D,u)^d} {u^{dr}}
\int_{(\RR^d)^r} 
F\left(\sum_{i=1}^k z_1E_{i1} +\dfrac D u\left(\begin{array}{c}
\vx_1 \\ \vdots \\ \vx_r\end{array}\right)\right) \d \vx_1 \cdots \d \vx_r.
\end{split}\]       

\vspace{0.1in}
%%%%%%%%%%%%%%
Now, let us fix $C\in \mathfrak D^k_{r-1,w}(D)$ and let $\Lambda_D(C)$ and $\mathcal R_D(C)$ be as in Lemma~\ref{Lemma of MS Lemaa 7.7}.
We want to compute 
\begin{equation}\label{eq 2: new pf}\begin{split}
&\int_{\mathcal F'\times [0,1)^{d-1}} 
%%%
\sum_{\scriptsize \begin{array}{c}
{\tp{(\ell'_1, \ldots, \ell'_r)}}\\
\in \Lambda_D-\Lambda_D(C)\end{array}}
%%%
\sum_{\scriptsize \begin{array}{c}
        {\tp{(\vn'_1, \ldots, \vn'_{r-1})}}\\
\in \Phi^{(d-1)}(C,w)\end{array}}\\
%%%
&\hspace{0.1in} F\left(\begin{array}{c|c}
\left(\begin{array}{c}
z_1 \\ \vdots \\ z_1\end{array}\right)+ \dfrac D u \left(\begin{array}{c} \ell'_1 \\ \vdots \\ \ell'_{r}\end{array}\right)+ \dfrac C w \left(\begin{array}{c}
\vn'_1 \\ \vdots \\ \vn'_{r-1}\end{array}\right){\tp{\vv'}}
& \dfrac C w \left(\begin{array}{c}
\vn'_1 \\ \vdots \\ \vn'_{r-1}\end{array}\right)g'
\end{array}\right) \d \vv'\d \mu'(g').\\
%%%
\end{split}\end{equation}

Since $\frac D u(\Lambda_D-\Lambda_D(C))= \frac D u (\mathcal R_D(C)+\Lambda_D(C))=\frac D u \mathcal R_D(C) + \frac C w \Lambda_C$ from Lemma~\ref{Lemma of MS Lemaa 7.7} (a), the integral \eqref{eq 2: new pf} is 
\begin{equation*}\begin{split}
%%%
&\sum_{\vl\in \mathcal R_D(C)}
\int_{\mathcal F'\times [0,1)^{d-1}} 
%%%
\sum_{\scriptsize \begin{array}{c}
{\tp{(\ell'_1, \ldots, \ell'_{r-1})}}\\
\in \Lambda_C\end{array}}
%%%
\sum_{\scriptsize \begin{array}{c}
        {\tp{(\vn'_1, \ldots, \vn'_{r-1})}}\\
\in \Phi^{(d-1)}(C,w)\end{array}}\\
%%%
&\hspace{0.3in} F\left(\begin{array}{c|c}
\left(\begin{array}{c}
z_1 \\ \vdots \\ z_1\end{array}\right)+ \dfrac D u \vl + \dfrac C w \left(\begin{array}{c} \ell'_1 \\ \vdots \\ \ell'_{r-1}\end{array}\right)+ \dfrac C w \left(\begin{array}{c}
\vn'_1 \\ \vdots \\ \vn'_{r-1}\end{array}\right){\tp{\vv'}}
& \dfrac C w \left(\begin{array}{c}
\vn'_1 \\ \vdots \\ \vn'_{r-1}\end{array}\right)g'
\end{array}\right) \d \vv'\d \mu'(g').
\end{split}\end{equation*}

Repeating the same argument with the above, where now we put $N={\tp{(\vn'_1, \ldots, \vn'_{r-1})}}$ with ${\tp{(\vn'_1, \ldots, \vn'_{r-1})}}\in \Phi^{(d-1)}(C,w)$ and 
\[
G\left(\begin{array}{c}
x_1 \\ \vdots \\ x_{r-1} \end{array}\right)
:=\hspace{-0.2in} \sum_{\scriptsize \begin{array}{c}
{\tp{(\ell'_1, \ldots, \ell'_{r-1})}}\\
\in \Lambda_C\end{array}}\hspace{-0.1in}
F\left(\hspace{-0.1in}\begin{array}{c|c}
\left(\begin{array}{c} 
z_1 \\ \vdots \\ z_1\end{array}\right)
+\dfrac D u \vl + \dfrac C w \left(\begin{array}{c} 
\ell'_1 \\ \vdots \\ \ell'_{r-1}\end{array}\right)
+\dfrac C w \left(\begin{array}{c} 
x_1 \\ \vdots \\ x_{r-1}\end{array}\right)
& \dfrac C w N g'
\end{array}\right),
\]                 
we have that the integral \eqref{eq 2: new pf} is
\[\begin{split}
\frac {N(C,w)^d} {w^{d(r-1)}}
\sum_{\scriptsize \begin{array}{c}
{\tp{(\ell_1, \ldots, \ell_k)}}\\
\in \frac D u \mathcal R_D(C)\end{array}}
\int_{(\RR^d)^{r-1}}
F\left(\sum_{i=1}^k (z_1+\ell_i)E_{i1} + \frac C w \left(\begin{array}{c}
\vx_1 \\ \vdots \\ \vx_{r-1}\end{array}\right)\right) \d \vx_1 \cdots \d \vx_{r-1}.
\end{split}\]

If $r=1$ and $\rk{\tp{(\vn'_1, \ldots, \vn'_r)}}=0$, that is, ${\tp{(\vn'_1, \ldots, \vn'_r)}}={\tp{(\origin, \ldots, \origin)}}$ and the integral is
\[
F\left(\sum_{i=1}^k (z_1 +\ell_i)E_{i1}\right),
\]
where ${\tp{(\ell_1, \ldots, \ell_k)}}\neq {\tp{(0, \ldots, 0)}}$.
Otherwise, they form $\Lambda_D\times \Phi^{(d-1)}(D,u)$ and one can proceed the same computation with the first case when $r\ge 2$.

Now the proposition follows from Lemma~\ref{Lemma of MS Lemaa 7.7} (c) after rearranging the summation with respect to $C\in \mathfrak D^{k}_{r-1, w}$ for $1\le r-1 \le k$ and $w\in \NN$.  
\end{proof}

%%%%%%%%%%%%%%%%%%%%%%%%%%%%%%%%
%%%%%%%%%%%%%%%%%%%%%%%%%%%%%%%%
For each $\vy\in \RR^d \smallsetminus \{\origin\}$, define
\[
X_q(\vy)=\left\{\Gamma(q) g\in X_q : \vy \in \left(\ZZ^d+ \frac {\vp} q \right)g\right\}.
\]
It is known that for each $t\in \NN$ with $\gcd(t,q)=1$, there is $\vk_t\in \ZZ^d+\vp/q$ with $\gcd(q\vk_t)=t$ so that we have the decomposition
%%%
\[
X_q(\vy)= \bigsqcup_{\scriptsize \begin{array}{c}
t\in \NN \\
(t,q)=1\end{array}}X_q(\vk_t, \vy),
\]
%%%
where $X_q(\vk_t, \vy) :=\{\Gamma(q)g\in X_q: \vk_t g=\vy\}$
%%%
(See \cite[Page 1993]{MS2010} for details).
%%%
Note that, the above decomposition holds for any such  choice of $\bm{k}_t$.
Moreover, if we put $g_t\in \SL_d(\RR)$ for each $t\in \NN$ with $\gcd(t,q)=1$ and $g_\vy\in \SL_d(\RR)$, respectively, such that $\ve_1 g_t=\vk_t$ and $\ve_1g_\vy=\vy$, it follows that 
\begin{equation}\label{def Xq(kt y)}
X_q(\vk_t, \vy)\simeq g_t^{-1}\left( (g_t\Gamma(q)g_t^{-1}\cap H)\backslash H\right) g_\vy\end{equation}
and one can define the probability measure $\nu_\vy$ on $X_q(\vy)$ for which $\nu_\vy|_{X_q(\vk_t, \vy)}$ is the pull-back measure of $\frac 1 {I_q\zeta(d)} \mu_H$, where $I_q:=[\SL_d(\ZZ):\Gamma(q)]$, with respect to the above identification (see \cite{MS2010}, especially (7.10)$\sim$(7.15) and Proposition 7.5).
%%%%%%%%%%%%%%%%%%%%%%%%%%%%%%%%%%%%%
                \begin{proposition}\label{MS Proposition 7.6}
                    Let $d\ge 3$ and $1\le k \le d-1$.
                    Suppose that ${\vp}\in \ZZ^d \smallsetminus \{\origin\}$ and $q\in \NN_{\ge 2}$ such that $\gcd(q, {\vp})=1$.
                    Let $P_t(\mathcal R(D))$ be as in Notation~\ref{notation:cong 1} after fixing $\mathcal R(D)$ for each $D\in \mathfrak D^k_{r,u}$.
                    
                    Let $F:(\RR^d)^k\rightarrow \RR_{\ge 0}$ be a bounded and compactly supported function.
                    For any $\vy\in \RR^d\smallsetminus \{\origin\}$, it follows that
                    \[\begin{split}
&\int_{X_q(\vy)}
\Siegel{k}{F} \left(\left(\ZZ^d+ \frac {\vp} q\right)g \right) \d \nu_{\vy}(g)\\
&=F\left({\tp{(\vy, \ldots, \vy)}}\right)+\sum_{\scriptsize \begin{array}{c}
        t\in \NN\\
        (t,q)=1\end{array}} \frac 1 {t^d} \sum_{\scriptsize \begin{array}{c}
        {\tp{(\ell_1, \ldots, \ell_k)}}\in \ZZ^k\\
        (\ell_1, \ldots, \ell_k, t)=1\end{array}}
F\left({\tp{\left(\frac {t+\ell_1 q} t \vy, \ldots, \frac{t+\ell_k} t \vy\right)}}\right)\\
%%%
&+\sum_{r=1}^{k-1}\sum_{u\in \NN} \sum_{D\in \mathfrak D^k_{r,u}}
\left[\frac {N(D,u)^d}{u^{dr}}
\int_{(\RR^d)^r} F\left(\left(\begin{array}{c} \vy \\ \vdots \\ \vy \end{array}\right) + \frac D u \left(\begin{array}{c} \vx_1 \\ \vdots \\ \vx_r\end{array}\right)\right) \d \vx_1 \cdots \d \vx_r\right.\\
%%%
&+\sum_{\scriptsize \begin{array}{c}
        t\in \NN\\
        (t,q)=1\end{array}}
        \sum_{\scriptsize 
        \begin{array}{c}
        \vl\in\\
        P_t(\mathcal R(D))\end{array}}
\frac {N(D,u)^{d}} {t^d\cdot u^{dr}}\times\\
&\hspace{0.8in}\left.\int_{(\RR^d)^r} F\left(
    \left(\begin{array}{c} \dfrac{t+\ell_1q}{t} \vy \\ \vdots \\ \dfrac{t+\ell_k q}{t} \vy\end{array}\right)
    + \frac D u \left(\begin{array}{c}
            \vx_1 \\
            \vdots\\
\vx_r\end{array}\right)\right) \d\vx_1 \cdots \d\vx_r\right]\\
&+\int_{(\RR^d)^k} F\left({\tp{(\vx_1, \ldots, \vx_k)}}\right)\d\vx_1 \cdots \d\vx_k.
                    \end{split}\]

                \end{proposition}
                %%%%%%%%%%%%%
                \begin{proof}
                    Recall the definitions of $g_t$, $g_\vy$ as in \eqref{def Xq(kt y)}. If we let 
                    $a_{t/q}=\diag(t/q, q/t, 1, \ldots, 1),$
then one can further assume that $g_t=a_{t/q}\gamma_t$ for some $\gamma_t\in \SL_d(\ZZ)$ (\cite[Page 1993]{MS2010}).
                    By the definition of $\nu_{\vy}$ on $X_q(\vy)$,
                    \[\begin{split}
&\int_{X_q(\vy)}
\Siegel{k}{F} \left(\left(\ZZ^d+ \frac {\vp} q\right)g \right) \d \nu_{\vy}(g)\\
&=\frac 1 {I_q \zeta(d)} \sum_{\scriptsize \begin{array}{c}
        t\in \NN\\
(t,q)=1\end{array}}
\int_{(g_t \Gamma(q) g_t^{-1} \cap H)\backslash H}
\sum_{\scriptsize \begin{array}{c}
        \vm_i \in \ZZ^d \\
1\le i\le k\end{array}}
F\left(\left(\begin{array}{c}
            \vm_1+{\vp}/q \\
            \vdots \\
\vm_k+{\vp}/q \end{array}\right) g_t^{-1}h g_\vy\right) \d\mu_H (h)\\
&=\frac {q^d} {I_q \zeta(d)} \sum_{\scriptsize \begin{array}{c}
        t\in \NN\\
(t,q)=1\end{array}} \frac 1 {t^d}
\int_{(\Gamma(q)\cap H)\backslash H}
\sum_{\scriptsize \begin{array}{c}
        \vm_i \in \ZZ^d \\
1\le i \le k\end{array}}
F\left(\left(\begin{array}{c}
            \vm_1+{\vp}/q \\
            \vdots \\
\vm_k+{\vp}/q \end{array}\right) \gamma_t^{-1}h a_{t/q}^{-1}g_\vy\right) \d\mu_H (h).
                    \end{split}\]

Note that $(\ZZ^d+{\vp}/q)\gamma_t^{-1}= (\ZZ^d+\vk_t)\gamma_t^{-1}= \ZZ^d+ (t/ q) \ve_1$ and $(\Gamma(q)\cap H)\setminus H$ is a $(q^{d-1}I_q^{(d-1)})$-covering of $(\SL_d(\ZZ)\cap H)\setminus H$, where $I_q^{(d-1)}:=[\SL_{d-1}(\ZZ): \SL_{d-1}(\ZZ)\cap \Gamma(q)]$, one can apply Proposition~\ref{MS Lemma 7.7}. 
                    Since $E_{i1}a_{t/q}^{-1}=(q/t) E_{i1}$, the above expression equals
                    \[\begin{split}
 \frac {q^{2d-1}I_q^{(d-1)}} {I_q \zeta(d)}
\sum_{\scriptsize \begin{array}{c}
        t\in \NN\\
(t,q)=1\end{array}}
\frac 1 {t^d}&\left[\sum_{\ell_1, \ldots, \ell_k\in \ZZ}
F\left( {\tp{\left(\frac {t+\ell_1q}  t \vy, \ldots, \frac {t+\ell_k q} t \vy\right)}}\right)\right.\\
&+\sum_{r=1}^k \sum_{u\in \NN} \sum_{D\in \mathfrak D_{r,u}}\sum_{\scriptsize \begin{array}{c}
        \vl={\tp{(\ell_1,\ldots, \ell_k)}}\\
\in \mathcal R(D)\end{array}}
 \frac {N(D,u)^{d}} {u^{dr}}\times\\
&\hspace{-0.2in}\left.\int_{(\RR^d)^r} F\left(
         \left(\begin{array}{c} \dfrac{t+\ell_1q}{t}  \vy \\ \vdots \\ \dfrac{t+\ell_k q}{t} \vy\end{array}\right) +\frac D u \left(\begin{array}{c}
                \vx_1\\
                \vdots\\
\vx_r\end{array}\right)a_{t/q}^{-1} g_{\vy}\right)\d\vx_1\cdots \d\vx_r\right].
                    \end{split}\]

We will use the well-known fact that
\[
                        \frac  {I_q \zeta(d)}{q^{2d-1}I_q^{(d-1)}}
                        =\sum_{\scriptsize \begin{array}{c}
                                    t_1\in \NN\\
                        (t_1, q)=1\end{array}} \frac 1 {t_1^d}.
                    \]

For the first summation, which is the case when $r=0$, put $t=t_1\cdot t_2$, where $t_1=\gcd(\ell_1,\ldots, \ell_k,t)$. By renaming $(\ell_1/t_1, \ldots, \ell_k/t_1)$ by $(\ell_1, \ldots, \ell_k)$, it follows that
\[\begin{split}
&\frac {q^{2d-1}I_q^{(d-1)}} {I_q \zeta(d)}
\sum_{\scriptsize \begin{array}{c}
        t\in \NN\\
(t,q)=1\end{array}}
\frac 1 {t^d}\sum_{\ell_1, \ldots, \ell_k\in \ZZ}
F\left({\tp{\left(\frac {t+\ell_1q}  t \vy, \ldots, \frac {t+\ell_k q} t \vy\right)}}\right)\\
%%%
&=\frac {q^{2d-1}I_q^{(d-1)}} {I_q \zeta(d)}
\sum_{\scriptsize \begin{array}{c}
        t_1\in \NN\\
(t_1,q)=1\end{array}} \frac 1 {t_1^d}\left[F\left({\tp{\left(\vy, \ldots, \vy\right)}}\right)+\right.\\
&\hspace{1.4in}\left.
\sum_{\scriptsize \begin{array}{c}
        t_2\in \NN\\
(t_2,q)=1\end{array}} \frac 1 {t_2^d}\sum_{\scriptsize \begin{array}{c}
\ell_1, \ldots, \ell_k\in \ZZ\\
(\ell_1, \ldots, \ell_k,t_2)=1\end{array}}
F\left({\tp{\left(\frac {t_2+\ell_1q}  {t_2} \vy, \ldots, \frac {t_2+\ell_k q} {t_2} \vy\right)}}\right)\right]\\
%%%
&= F\left({\tp{\left(\vy, \ldots, \vy\right)}}\right)+\sum_{\scriptsize \begin{array}{c}
t\in \NN\\
(t,q)=1\end{array}} \frac 1 {t^d}
\sum_{\scriptsize\begin{array}{c}
\ell_1, \ldots, \ell_k\in \ZZ\\
(\ell_1, \ldots, \ell_k, t)=1\end{array}}
F\left({\tp{\left(\frac {t+\ell_1q}  t \vy, \ldots, \frac {t+\ell_k q} t \vy\right)}}\right).
\end{split}\]

For the case when $r=k$, we only have $u=1$, $D=\Id_k$ and $\mathcal R(D)=\{{\tp{(0,\ldots,0)}}\}$. After a change of variables, the integral in this case is
\[\begin{split}
&\frac {q^{2d-1} I_q^{(d-1)}} {I_q \zeta(d)}
\sum_{\scriptsize \begin{array}{c}
t\in \NN \\
(t,q)=1 \end{array}} \frac 1 {t^d} 
\int_{(\RR^d)^k} F\left( \left(\begin{array}{c} \vy \\ \vdots \\ \vy\end{array}\right) + \left(\begin{array}{c} \vx_1 \\ \vdots \\ \vx_k\end{array}\right) a^{-1}_{t/q} g_\vy \right) \d \vx_1 \cdots \d \vx_k\\
&=\frac {q^{2d-1} I_q^{(d-1)}} {I_q \zeta(d)}
\sum_{\scriptsize \begin{array}{c}
t\in \NN \\
(t,q)=1 \end{array}} \frac 1 {t^d} 
\int_{(\RR^d)^k} F \left(\begin{array}{c} \vx_1 \\ \vdots \\ \vx_k\end{array}\right)  \d \vx_1 \cdots \d \vx_k\\
&=\int_{(\RR^d)^k} F \left(\begin{array}{c} \vx_1 \\ \vdots \\ \vx_k\end{array}\right)  \d \vx_1 \cdots \d \vx_k.
\end{split}\]

Suppose that for $1\le r \le k-1$ and $u\in \NN$, $D\in \mathfrak D^k_{r,u}$ and $\mathcal R(D)$ are given. 
By rearranging the summation, it holds that
\[\begin{split}
%%%
&\frac {q^{2d-1} I_q^{(d-1)}} {I_q \zeta(d)}
 \sum_{\scriptsize \begin{array}{c}
t\in \NN \\
(t,q)=1 \end{array}} \frac 1 {t^d}
\sum_{\scriptsize \begin{array}{c}
        \vl={\tp{(\ell_1,\ldots, \ell_k)}}\\
\in \mathcal R(D)\end{array}}
 \frac {N(D,u)^{d}} {u^{dr}}\\
&\hspace{0.8in}\int_{(\RR^d)^r} F\left(
         \left(\begin{array}{c} \dfrac{t+\ell_1q}{t} \vy \\ \vdots \\ \dfrac{t+\ell_k q}{t} \vy\end{array}\right) +\frac D u \left(\begin{array}{c}
                \vx_1\\
                \vdots\\
\vx_r\end{array}\right)a_{t/q}^{-1} g_{\vy}\right)\d\vx_1\cdots \d\vx_r\\
\end{split}\]
\[\begin{split}
%%%
&=\frac {q^{2d-1} I_q^{(d-1)}} {I_q \zeta(d)}
 \sum_{\scriptsize \begin{array}{c}
t_1\in \NN \\
(t_1,q)=1 \end{array}} \frac 1 {t_1^d} \left[\frac {N(D,u)^d} {u^{dr}}
\int_{(\RR^d)^r} F\left(\left(\begin{array}{c} \vy \\ \vdots \\ \vy\end{array}\right) + \frac D u \left(\begin{array}{c} \vx_1 \\ \vdots \\ \vx_r\end{array}\right)\right) \d \vx_1 \cdots \d \vx_r\right.\\
&\left.+\hspace{-0.1in}%\frac {q^{2d-1} I_q^{(d-1)}} {I_q \zeta(d)}
 \sum_{\scriptsize \begin{array}{c}
t\in \NN \\
(t,q)=1 \end{array}} \hspace{-0.1in}
\sum_{\scriptsize \begin{array}{c}
        \vl\in\\
P_t(\mathcal R(D))\end{array}}%\sum_{\scriptsize \begin{array}{c}
%e\in \NN\\
%(e,q)=1\\
%e\vl\in \mathcal R(D)\end{array}} 
\hspace{-0.15in}\frac 1 {t^d}
 \frac {N(D,u)^{d}} {u^{dr}}\int_{(\RR^d)^r} F\left(
        \left(\begin{array}{c} \dfrac{t+\ell_1q}{t} \vy \\ \vdots \\ \dfrac{t+\ell_k q}{t} \vy\end{array}\right) +\frac D u \left(\begin{array}{c}
                \vx_1\\
                \vdots\\
\vx_r\end{array}\right)\right)\d\vx_1\cdots \d\vx_r\right]\\
%%%
&=\frac {N(D,u)^d} {u^{dr}} \int_{(\RR^d)^r} F\left(\left(\begin{array}{c} \vy \\ \vdots \\ \vy\end{array}\right) + \frac D u \left(\begin{array}{c} \vx_1 \\ \vdots \\ \vx_r\end{array}\right)\right) \d \vx_1 \cdots \d \vx_r\\
&\hspace{0.1in}+\hspace{-0.1in}\sum_{\scriptsize \begin{array}{c}
t\in \NN \\
(t,q)=1 \end{array}}\hspace{-0.1in}
\sum_{\scriptsize \begin{array}{c}
        \vl\in\\
P_t(\mathcal R(D))\end{array}} \vspace{-0.15in}
\frac {N(D,u)^{d}} {t^d \cdot u^{dr}}\int_{(\RR^d)^r} F\left(
        \left(\begin{array}{c} \dfrac{t+\ell_1q}{t} \vy \\ \vdots \\ \dfrac{t+\ell_k q}{t} \vy\end{array}\right)+\frac D u \left(\begin{array}{c}
                \vx_1\\
                \vdots\\
\vx_r\end{array}\right)\right)\d\vx_1\cdots \d\vx_r,
\end{split}\]
 where for the first equality, as before, we put $t=t_1t_2$ with $t_1=\gcd(\vl,t)$ and rename $t_2$ and $\vl/t_1$ by $t$ and $\vl$, respectively.
This completes the proof of the proposition.
                \end{proof}

%%%%%%%
To prove Theorem~\ref{higher moment formula}, we need one more lemma which has appeared in \cite[(3.6)]{GKY2020} and also in \cite[(7.25)]{MS2010}.
                \begin{lemma}[{\cite[(3.6)]{GKY2020}}]\label{MS (7.25) GKY (3.6)}
                    For a Borel measurable function $\varphi: X_q\times \RR^d\rightarrow \RR_{\ge 0}$, we have
                    \[
                        \frac 1 {I_q} \int_{X_q} \sum_{\vm\in \ZZ^d}
                        \varphi\left(\Gamma(q)g, \left(\vm+\frac {\vp} q\right) g\right)\d\mu(g)
                        =\int_{\RR^d \smallsetminus \{\origin\}}\int_{X_q(\vy)} \varphi(\Gamma(q)g, \vy) \d \nu_{\vy}(g)\d\vy.
                    \]
                \end{lemma}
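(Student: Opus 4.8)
The plan is to prove this disintegration (unfolding) identity one $\Gamma(q)$-orbit at a time, in the spirit of \cite[\S7]{MS2010} and \cite[\S3]{GKY2020}. Write $\Gamma:=\Gamma(q)$ and $L:=\ZZ^d+\vp/q$, and assume $q\ge2$, so that $0\notin L$; the Lebesgue-null set $\{0\}$ does not affect the right-hand side. First I would record the decomposition of $L$ into $\Gamma$-orbits: for $\vw\in L$ the integer $t(\vw):=\gcd(q\vw)$ is coprime to $q$ (because $\gcd(\vp,q)=1$), and $\vw=\tfrac{t(\vw)}{q}\vv_0$ with $\vv_0:=q\vw/t(\vw)$ a primitive integer vector in the residue class $t(\vw)^{-1}\vp\bmod q$; conversely each $t$ coprime to $q$ occurs. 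The decomposition $X_q(\vy)=\bigcup_{t}X_q(\vk_t,\vy)$ recalled before \eqref{def Xq(kt y)} amounts precisely to the statement that $\mathcal O_t:=\{\vw\in L:t(\vw)=t\}$ is a single $\Gamma$-orbit, represented by $\vk_t$; hence $L=\bigsqcup_{\gcd(t,q)=1}\Gamma\vk_t$ and $X_q(\vk_t,\vy)=\{\Gamma g:\vy g^{-1}\in\mathcal O_t\}$.

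Next I would fold the orbit sum into the domain of integration. Put $\Gamma_{\vk_t}:=\Gamma\cap\mathrm{Stab}_{\SL_d(\RR)}(\vk_t)$; the function $g\mapsto\varphi(\Gamma g,\vk_t g)$ is $\Gamma_{\vk_t}$-left-invariant (since $\Gamma_{\vk_t}\subseteq\Gamma$ fixes $\vk_t$), so by $\varphi\ge0$, Tonelli, and the usual unfolding of a coset sum,
\[
\frac1{I_q}\int_{X_q}\sum_{\vw\in L}\varphi(\Gamma g,\vw g)\,d\mu(g)=\frac1{I_q}\sum_{\gcd(t,q)=1}\ \int_{\Gamma_{\vk_t}\backslash\SL_d(\RR)}\varphi(\Gamma g,\vk_t g)\,d\mu(g).
\]
Then I would disintegrate each summand over $\RR^d\setminus\{0\}$ along the orbit map $\Gamma_{\vk_t}g\mapsto\vk_t g$. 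Because $\SL_d(\RR)$ preserves Lebesgue measure on $\RR^d$ and acts transitively on $\RR^d\setminus\{0\}$ with point stabiliser $H_t:=\mathrm{Stab}_{\SL_d(\RR)}(\vk_t)=g_t^{-1}Hg_t$ — unimodular, a conjugate of the group $H$ of the text — the measure $\mu$ disintegrates under this map, modulo $\Gamma_{\vk_t}$, into Lebesgue measure on the base $\RR^d\setminus\{0\}$ and, along the fibre over $\vy$, a constant $\kappa$ times the measure induced by $\mu_{H_t}$ (the transport of $\mu_H$ under conjugation by $g_t$), with $\kappa$ depending only on the chosen normalisations. Reading the fibre over $\vy$ through this identification and through \eqref{def Xq(kt y)} exhibits it as $X_q(\vk_t,\vy)$, so by the definition of $\nu_\vy|_{X_q(\vk_t,\vy)}$ as the pull-back of $\tfrac1{I(q)\zeta(d)}\mu_H$ the $t$-summand equals
\[
c_t\int_{\RR^d\setminus\{0\}}\int_{X_q(\vk_t,\vy)}\varphi(\Gamma g,\vy)\,d\nu_\vy(g)\,d\vy
\]
for an explicit constant $c_t=c_t(q,d)$.

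The remaining step, and the only genuinely delicate one, is to check that $\tfrac1{I_q}c_t=1$ for every $t$ coprime to $q$; granting this, summing over $t$ and using $X_q(\vy)=\bigsqcup_{\gcd(t,q)=1}X_q(\vk_t,\vy)$ (together with the nullity of $\{0\}$) yields exactly the asserted identity. This constant accounting is where all the normalisations must be pinned down at once — the index $I_q=[\SL_d(\ZZ):\Gamma(q)]=\mu(X_q)$, the scaling $\kappa$ relating $\mu$, $\mu_{H_t}$ and Lebesgue measure, the factor $\tfrac1{I(q)\zeta(d)}$ built into $\nu_\vy$, and the $t$-dependence (precisely the $t^{-d}$-type weights already visible in Proposition~\ref{MS Proposition 7.6}) — and it is exactly the computation performed in \cite[\S7]{MS2010} and \cite[\S3]{GKY2020}, which one may alternatively just invoke. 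Conceptually, the point is that both sides define $\SL_d(\RR)$-invariant Radon measures on the incidence set $\{(\Gamma g,\vy)\in X_q\times\RR^d:\vy\in Lg\}$, whose $\SL_d(\RR)$-orbits (for the diagonal right action) are indexed by the $\Gamma$-orbits $\mathcal O_t$; on each such orbit — a homogeneous space of the unimodular group $\SL_d(\RR)$ modulo a discrete subgroup — an invariant Radon measure is unique up to a positive scalar, so the two measures agree orbitwise up to a constant, which a single compactly supported test function pins to $1$.
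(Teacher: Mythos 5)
Your sketch is sound, and in fact it supplies more than the paper does: the paper does not prove this lemma at all, it simply imports it from \cite[(3.6)]{GKY2020} and \cite[(7.25)]{MS2010}, and your unfolding-plus-disintegration argument is essentially a reconstruction of the proof in those sources. The skeleton is correct: the decomposition $\ZZ^d+\vp/q=\bigsqcup_{(t,q)=1}\Gamma(q)\vk_t$ via $t(\vw)=\gcd(q\vw)$, the unfolding of each orbit sum to $\Gamma_{\vk_t}\backslash\SL_d(\RR)$ (valid by Tonelli since $\varphi\ge 0$), and the disintegration of $\mu$ along $g\mapsto \vk_t g$ into Lebesgue measure on $\RR^d\smallsetminus\{0\}$ times a fibre measure. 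Two remarks to tighten it. First, once you define $\mu_{H_t}$ as the transport of $\mu_H$ under conjugation by $g_t$, the disintegration constant is automatically independent of $t$ (left-translate by $g_t$ before disintegrating), so the $t^{-d}$-type weights you allude to do not enter your $c_t$ at all; they are already built into $\nu_\vy$ through the masses $\mu_H\bigl((g_t\Gamma(q)g_t^{-1}\cap H)\backslash H\bigr)$, which is what makes $\nu_\vy$ a probability measure. Second, the deferred normalisation check can be closed without invoking the references: apply your disintegration with $\Gamma_{\vk_t}$ replaced by $\SL_d(\ZZ)\cap H$ and compare with Siegel's mean value formula for primitive vectors, using $\mu_H\bigl((\SL_d(\ZZ)\cap H)\backslash H\bigr)=1$ (fundamental domain $\mathcal F_{d-1}\times[0,1)^{d-1}$); this forces the fibre constant to be $1/\zeta(d)$, so each $t$-summand of the left-hand side carries the coefficient $\tfrac{1}{I_q\zeta(d)}$, exactly matching $\nu_\vy|_{X_q(\vk_t,\vy)}=\tfrac{1}{I_q\zeta(d)}\mu_H$ under the identification \eqref{def Xq(kt y)}, i.e.\ $\tfrac{1}{I_q}c_t=1$ for every $t$. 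Your closing uniqueness-of-invariant-measure remark only gives agreement up to an orbitwise scalar; pinning that scalar to $1$ is again the same normalisation computation, so it does not replace the Siegel-type evaluation.
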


                %%%%%%%%%%%%%%%%%%%%%%%%%%
                \begin{proof}[Proof of Theorem~\ref{higher moment formula 1}]% F{O{L{D
                    Let $F : (\mathbb{R}^{d})^k \to \mathbb{R}_{\geq 0}$ be compactly supported bounded and
                    \[\begin{split}
                        \varphi(\Gamma(q)g, \vy)
&=\sum_{\scriptsize \begin{array}{c}
        \vm_i\in \ZZ^d\\
1\le i \le k-1\end{array}}
F\left(\vy, \left(\vm_1+\frac {\vp} q\right)g, \ldots, \left(\vm_{k-1}+\frac {\vp} q\right)g\right)\\
&=\Siegel{k-1}{F_\vy}\left(\left(\ZZ^d+\frac {\vp} q\right)g\right),
                    \end{split}\]
                    where $F_\vy:(\RR^d)^{k-1}\rightarrow \RR_{\ge0}$ is defined by
                    \[
                        F_\vy(\vy_2, \ldots, \vy_k)=F(\vy, \vy_2, \ldots, \vy_k).
                    \]
                    By Lemma~\ref{MS (7.25) GKY (3.6)},
                    we have that
                    \[\begin{split}
                        \frac 1 {J_q} \int_{Y_{{\vp}/q}} \Siegel{k}{F} (\Lambda) \d\mu(\Lambda)
&=\frac 1 {I_q} \int_{X_q} \sum_{\vm\in \ZZ^d}
\varphi\left(\Gamma(q)g,\left(\vm+\frac {\vp} q\right)g \right) \d\mu(g)\\
&=\int_{\RR^d \smallsetminus \{\origin\}}\int_{X_q(\vy)} \varphi(\Gamma(q)g, \vy) \d \nu_{\vy}(g)\d\vy\\
%&=\frac 1 {I_q} \int_{X_q} \sum_{\vm\in \ZZ^d+{\vp}/q}
%\Siegel{k-1}{f^{k-1}}(\Gamma(q)g)f(\vv g) \d\mu(g)\\
 &=\int_{\RR^d \smallsetminus \{\origin\}}\int_{X_q(\vy_1)}
 \Siegel{k-1}{F_{\vy_1}}(\Gamma(q)g) \d \nu_{\vy_1}(g) \d\vy_1.
\end{split}\]

For the first equality, let us recall that $X_q=\Gamma(q)\setminus \SL_d(\RR)$ is a $I_q/J_q$-covering of $Y_{\vp/q}$, where $I_q=[\SL_d(\ZZ):\Gamma(q)]$ and $J_q=[\SL_d(\ZZ):\Gamma_1(q)]$.

Thus for $F \geq 0$, the equations in Theorem \ref{higher moment formula 1} immediately follows from Proposition~\ref{MS Proposition 7.6}, where we replace $\vy_1$ by $\frac 1 t \vy_1$. 

Let us deal with the finiteness claim for $F \geq 0$: we will show that the LHS of \eqref{higher moment formula: eq (1)1} is finite.
             Define $\Phi : Y_{\bm{p}/q} \to X_d$ by $\Phi(\Lambda) := \Lambda - \Lambda$ for every $\Lambda \in Y_{\bm{p}/q}$.
             This map induces the measure $\Phi_*(\mu_q)$ on $X_d$, which is easily seen to be $\SL_d(\mathbb{R})$-invariant.
             Therefore $\Phi_*(\mu_q)$ equals to $\mu$ up to a positive constant.
             In fact $\Phi$ is the natural $J_q$-to-1 covering map from $Y_{\bm{p}/q}$ to $X_d$, thus $\Phi_*(\mu) = J_q\mu$.
             For $\Lambda \in Y_{\bm{p}/q}$ we have $\Lambda \subseteq q^{-1}\Phi(\Lambda)$.
             Therefore $\mathcal{S}_k(F)(\Lambda) \leq \mathcal{S}_k(F_q)(\Lambda)$, where $F_q : (\mathbb{R}^{d})^k \to \mathbb{R}_{\geq 0}$, $\bm{x} \mapsto F(q^{-1}\bm{x})$ is a compactly supported function.
             Hence
             % UnnumberedAlign
             \begin{align*}
                 \int_{Y_{\bm{p}/q}} \mathcal{S}_k(F)(\Lambda) \d{\mu_q(\Lambda)} &\leq \int_{Y_{\bm{p}/q}} \mathcal{S}_k(F_q)(\Phi(\Lambda)) \d{\mu_q(\Lambda)} \\
                 &= J_q\int_{X_d} \mathcal{S}_k(F_q)(\Lambda) \d{\mu(\Lambda)} < \infty,
             \end{align*}
             by Schmidt \cite{Sch58}.
             Thus in the present case, the sum in the RHS of \eqref{higher moment formula: eq (1)1} is convergent.

             We can now use classical techniques to prove Theorem \ref{higher moment formula 1} for a compactly supported bounded function $F : (\mathbb{R}^{d})^k \to \mathbb{R}$.
             We first note that Theorem \ref{higher moment formula 1} holds for $F_+ := \max(F, 0)$ and $F_- := \max(-F, 0)$.
             Finiteness for the function $|F|$ implies that the sum with $F$ in the RHS of \eqref{higher moment formula: eq (1)1} is absolutely convergent.
             Furthermore $\mathcal{S}_k(F)(\Lambda) = \mathcal{S}_k(F_+)(\Lambda) - \mathcal{S}_k(F_-)(\Lambda)$ (for a.e.\ $\Lambda$); we can integrate and rearrange to see that Theorem \ref{higher moment formula 1} holds.

\end{proof}% F}O}L}D

% F}O}L}D
% Subsection
\subsection{Higher   moment formulae revisited}\label{Section 2.3}% F{O{L{D

%{\color{blue} For applications to Poisson distribution which are proved in the next section, we will need that the ``admissible matrices" appearing in the higher moment formula for $Y$, namely Theorem \ref{higher moment formula: affine 1} are contained in $\mathfrak D^{k'}_{r',u'}$ for some $k', r'$ and $u'$ independent of $\mathcal R(D)$. For the congruence case, we will need them to be independent of $\mathcal R(D)$ for $D\in \mathfrak D^k_{r,u}$. Accordingly, we prove the two needed variations in this section.}

For applications to Poisson distribution which are proved in the next section, we will need that the ``admissible matrices" appearing in the higher moment formula for $Y$ are contained in $\mathfrak D^{k'}_{r',u'}$ for some $k', r'$ and $u'$, which does not hold in Theorem \ref{higher moment formula: affine 1} and Theorem \ref{higher moment formula 1}. In the process of proving the needed variations of the higher moment formulae, we will define canonical sets of admissible matrices for each cases. In particular, we will see that the set of ``congruence-admissible matrices'' can be defined without using any choice of $\mathcal R(D)$ in Notation~\ref{notation:cong 1}.

Let us first refine the higher moment formula for the space $Y$ of affine lattices in $\RR^d$. 
\begin{theorem}\label{higher moment formula: affine}
Let $F:(\RR^d)^k\rightarrow \RR_{\ge 0}$ be bounded and compactly supported. 
For $d\ge 3$ and $3\le k\le d$,  
\begin{equation}\label{eq 1: higher moment formula: affine}
\int_Y \Siegel{k}{F}(\Lambda) \d\mu_Y(\Lambda)
=\sum_{m=1}^k \sum_{u\in \NN} \sum_{\widetilde{D}\in \mathfrak A^k_{m,u}}
\frac {N(\widetilde{D},u)^d} {u^{dm}}
\int_{(\RR^d)^m} F\left(\frac {\widetilde{D}} u \left(\begin{array}{c}
\vy_1 \\ \vdots \\ \vy_m \end{array}\right)\right) \d\vy_1 \cdots \d\vy_m,
\end{equation}
where $\mathfrak A^k_{m,u}$ is a subset of $\mathfrak D^k_{m,u}$ given by
\[
\mathfrak A^k_{m,u}=\left\{C\in \mathfrak D^k_{m,u} : \sum_{i=1}^{m} [C]^i={\tp{(u, \ldots, u)}} \right\}.
\]
\end{theorem}
Notice that when $m=1$ and $k$, the only possible $u$ is $u=1$ and 
\[
\mathfrak A^k_{1,1}=\left\{{\tp{(1, \ldots, 1)}}\right\}
\quad\text{and}\quad
\mathfrak A^k_{k,1}=\left\{\Id_k\right\},
\]
which corresponds to the first and second integrals of the RHS in \eqref{eq 1: higher moment formula: affine 1}, respectively.
\begin{proof}
Assume that $2\le m\le k-1$ so that $1\le r:=m-1\le k-2$.
Recall the $k\times m$ matrix $D'$ in Theorem~\ref{higher moment formula: affine 1} from $D\in \mathfrak D^{k-1}_{r,u}$. 

Take the map
\begin{equation}\label{transition: affine}
D\in \mathfrak D^{k-1}_{r,u}
\quad\mapsto\quad 
uD' 
\quad\mapsto\quad
\widetilde{D} \in \mathfrak D^k_{m,u},
\end{equation}
where we define $[\widetilde{D}]^1=[uD']^1 -\sum_{j=2}^m [uD']^j$ and $[\widetilde{D}]^j=[uD']^j$ for $2\le j\le m$. Clearly, the map is injective and $\widetilde D\in \mathfrak A^k_{m,u}$. 

Conversely, for any $C\in \mathfrak A^k_{m,u}$, denote by D the right-bottom minor of $C$ of the size $(k-1)\times r$. Then one can verify that $D\in \mathfrak D^{k-1}_{r,u}$ and $\widetilde D=C$.

Moreover, it is easy to show from their definitions that
\[
\frac {N(\widetilde{D},u)^d} {u^{d(r+1)}}
=\frac {N(D,u)^d} {u^{dr}},
\]
and the map $uD' \mapsto \widetilde D$ is the simple change of variables $\vy_{j}+\vy_1 \mapsto \vy'_{j}$ for $2\le j \le m$:
\[
\int_{(\mathbb{R}^{d})^{m}} F {\left({D'} \begin{pmatrix}
\bm{y}_1 \\ \vdots \\ \bm{y}_{m} \end{pmatrix}\right)} \d{\bm{y}_1} \cdots \d{\bm{y}_{m}}
=
\int_{(\mathbb{R}^{d})^{m}} F {\left(\frac{\widetilde D}{u}
\begin{pmatrix} \bm{y}_1 \\ \vdots \\ \bm{y}_{m} \end{pmatrix}\right)} \d{\bm{y}_1} \cdots \d{\bm{y}_{m}}.
\]
\end{proof}

In contrast to the affine case, in the congruence case it is difficult and complicated to describe the subset of matrices $\widetilde{D}\in \mathfrak D^k_{m,u_0}$, for given $1\le m \le k$ and $u_0\in \NN$, such that
\[
\widetilde{D}\RR^m= D'\RR^m \;\text{or}\; D'_{t,\vl}\RR^m
\]
for some $t\in \NN$ with $(t,q)=1$ and $\vl\in P_t(\mathcal R(D))$ appearing in Theorem~\ref{higher moment formula 1}.

For each $u\in \NN$ and $D\in \mathfrak D^{k-1}_{m-1,u}$, once we fix $\mathcal R(D)$ in Notation~\ref{notation:cong 1}, by the map
\begin{equation}\label{transition: cong}\begin{gathered}
D \mapsto uD'\mapsto \widetilde{D}\;\text{as in \eqref{transition: affine}}\\
(D, t, \vl) \mapsto u_0 D'_{t,\vl}=u_0\left(\begin{array}{c|c}
                        1 & 0 \, \cdots \, 0 \\
                        \hline
                        \begin{array}{c}
                    (t+\ell_1q)/t \\ \vdots \\ (t+\ell_{k-1}q)/t \end{array} & \dfrac 1 u D \end{array}\right) \mapsto \widetilde{D},
\end{gathered}\end{equation}
where $\widetilde D$ is defined by
\[
[\widetilde{D}]^1=[u_0D'_{t,\vl}]^1-\sum_{j=1}^{m-1} \frac {t+\ell_{i_j}q} t[u_0D'_{t,\vl}]^{j+1}
\quad\text{and}\quad
[\widetilde{D}]^j=[u_0D'_{t,\vl}]^j\;(j=2,\ldots, m),
\]
and $u_0\in \NN$ is taken such that $\widetilde {D}\in \Mat_{k,m}(\ZZ)$ with $\gcd \widetilde{D}=1$. Clearly, $\widetilde D\in \mathfrak D^k_{m, u_0}$.

Hence, one can attempt to define such a subset $\mathfrak C^k_{m,u_0}$ of $\mathfrak D^k_{m,u_0}$ by
\begin{equation}\label{def of mathfrak C 1}
\mathfrak C^k_{m,u_0}:=
\left\{C\in \mathfrak D^k_{m,u_0}: 
\begin{array}{c}
C=\widetilde{D}\;\text{for some}\;D\in \mathfrak D^{k-1}_{m-1,u}\;\text{or}\\
(D,t,\vl)\in \mathfrak D^{k-1}_{m-1,u}\times \NN\times \mathcal R(D)\;\text{in Notation~\ref{notation:cong 1}}\\
\text{defined as in \eqref{transition: cong}}
\end{array}
\right\}
\end{equation}
and reformulate the higher moment formula using these $\mathfrak C^k_{m,u_0}$.

As things stand,  $\mathfrak C^k_{m,u_0}$ seems to depend on an ad-hoc choice of a set of representatives $\mathcal{R}(D)$. However, the anonymous referee has kindly provided us with an argument using the Riesz representation theorem which shows that the set $\mathfrak C^k_{m,u_0}$ is independent to the choice of $\mathcal R(D)$ regardless of its role in the construction. With this as background, we now provide a cleaner definition of the set $\mathfrak C^k_{m,u_0}$, meaning that we don't need \emph{an ad-hoc choice} of $\mathcal R(D)$ for each $D\in \mathfrak D^{k-1}_{m-1,r}$. This definition was also suggested by the referee.

%Indeed, $\mathcal R(D)\simeq \ZZ^k/\frac D u \Lambda_D$ and if we take a different set of representatives, say $\mathcal R'(D)$, then we should have the property that for $\vl\in P_t(\mathcal R(D))$, there is $\vl'\in P_t(\mathcal R(D))$ for which $\vl+\frac D u\Lambda_D=\vl'+\frac D u\Lambda_D$ and vice versa.
%Then it is easy to see that $(D, t, \vl)$ and $(D, t, \vl')$ give the same $\widetilde D\in \mathfrak C^k_{m,u_0}$.

\begin{theorem}\label{higher moment formula}% F{O{L{D
    Let $d\ge 3$ and $1\le k \le d-1$.
    Let $F:(\RR^d)^k\rightarrow \RR_{\ge 0}$ be bounded and compactly supported. Then
    \begin{equation}\label{higher moment formula: eq (1)}\begin{split}
    &\int_{Y_{{\vp}/q}} \Siegel{k}{F} (\Lambda) \d\mu_q(\Lambda)
=\int_{(\RR^d)^k} F\left({\tp{(\vy_1, \ldots, \vy_k)}}\right) \d\vy_1 \cdots \d\vy_k\\
&\hspace{0.5in}+\sum_{m=1}^{k-1} \sum_{u\in \NN} \sum_{\widetilde {D}\in \mathfrak C^k_{m,u_0}}
\frac {N(\widetilde {D},u_0)^d} {u_0^{dm}} \int_{(\RR^d)^m} F\left(\frac {\widetilde D} {u_0} \left(\begin{array}{c}
\vy_1 \\ \vdots \\ \vy_m\end{array}\right)\right) \d\vy_1 \cdots \d\vy_m,
    \end{split}\end{equation}
where for $1\le m\le k-1$ and $u_0\in \NN$, %$\mathfrak C^k_{m,u_0}$
\begin{equation}\label{def of mathfrak C 2}
\mathfrak C^k_{m,u_0}
=\left\{C\in \mathfrak D^k_{m,u_0}: \exists \vv=\left(\hspace{-0.05in}\begin{array}{c} v_1 \\ \vdots \\ v_k\end{array}\hspace{-0.05in}\right)\hspace{-0.05in} \in \frac C {u_0} \Lambda_C\;\;\text{s.t.}\begin{array}{c}
\gcd(v_1,q)=1,\\
v_1\equiv \cdots \equiv v_k\mod q,\;\text{and}\\
|v_1|=\min (\NN \cap \{\vv'\cdot \ve_1: \vv'\in \frac C u \Lambda_C \})\end{array} \right\}.
\end{equation}
Here, $\ve_1={\tp{(1,0,\ldots, 0)}}\in \RR^k$ and $\vv_1\cdot\vv_2={\tp{\vv_1}}\vv_2$ is the standard dot product of $\RR^k$.
\end{theorem}% F}O}L}D

\begin{proof}
We will consider the case when $m\ge 2$, then the case when $m=1$ would be easily seen. Let us first show that the sets defined as in \eqref{def of mathfrak C 1} and \eqref{def of mathfrak C 2} are identical.

%%%
Assume that $C=\widetilde{D}$ is an element of the set in \eqref{def of mathfrak C 1}. Note that
\[
\frac C {u_0} \Lambda_C = C\RR^m \cap \ZZ^k = D'\RR^m \cap \ZZ^k\;\text{or}\; D'_{t,\vl}\RR^m \cap \ZZ^k,
\]
where $D'$ or $D'_{t,\vl}$ is as in \eqref{transition: cong} for some $D\in \mathfrak D^{k-1}_{r,u}$ ($r:=m-1$), or $D\in \mathfrak D^{k-1}_{r,u}$, $t\in \NN$ with $(t,q)=1$, and $\vl={\tp{(\ell_1, \ldots, \ell_{k-1})}} \in \mathcal R(D)\subset \ZZ^{k-1}$ settled in Notation~\ref{notation:cong 1}, respectively. In particular,
\[
\vv:={\tp{(1, \ldots, 1)}}
\quad\text{or}\quad
{\tp{\left(t, t+\ell_1 q, \ldots, t+\ell_{k-1}q\right)}}
\in \frac C {u_0} \Lambda_C,\]
respectively. 

It suffices to show that
\begin{equation}\label{decom}
\frac C {u_0} \Lambda_C= \ZZ\vv \oplus {\tp{\left(0, \dfrac D u \Lambda_D \right)}},
\end{equation}
where ${\tp{\left(0, \frac D u \Lambda_D \right)}}$ is the embedded image of $\frac D u \Lambda_D\subseteq \RR^{k-1}$ into the last $(k-1)$ coordinates of $\RR^k$, since then it gives the fact that $v_1=\min (\NN \cap \{\vv'\cdot \ve_1: \vv'\in \frac C u \Lambda_C \})$.
The inclusion of the reverse direction is obvious. 

Pick an arbitrary $\vw\in \frac C {u_0} \Lambda_C$. Since $C\RR^m= \RR \vv \oplus {\tp{(0, D\RR^{r-1})}}$, one can take
\[\vw=\frac {c_1}{c_2}\vv+ {\tp{(0,\vv')}},\] 
where $c_1\in \ZZ$, $c_2\in \NN$ with $\gcd(c_1,c_2)=1$ and $\vv'\in D\RR^{r-1}\subseteq \RR^{k-1}$.
Since $\vw\in \ZZ^k$, it holds that
\begin{equation}\label{eq: cond 1}
\frac {c_1}{c_2}v_1 \in \ZZ
\;\Leftrightarrow\;
c_2|v_1
\quad\text{and}\quad 
\frac {c_1}{c_2}q\vl + \vv' \in \ZZ^{k-1}.
\end{equation}

If $v_1=1$, then automatically $c_2=1$ and $\vv'\in \ZZ^{k-1}\cap D\RR^r=\frac D u \Lambda_D$, which implies that $\vw\in \ZZ\vv \oplus {\tp{\left(0, \frac D u \Lambda_D\right)}}$.

Suppose that $v_1=t\ge 2$ so that $\vl\neq {\tp{(0,\ldots,0)}}$. Denote $\ell=\gcd(\vl)$ and $\widehat{\vl}=\frac 1 \ell \vl$, the primitive vector of the $\vl$-direction. 
Following Notation~\ref{notation:cong 1}, let $\vb_{k-m},\ldots, \vb_{k-1}$ be the basis of $\frac D u \Lambda_D$. Then it follows from the definition of $\mathcal R(D)$ that $\{\widehat{\vl}, \vb_{k-m}, \ldots, \vb_{k-1}\}$ is a primitive set, i.e.,
\[
\ZZ\widehat{\vl}\oplus \ZZ \vb_{k-m} \oplus \cdots \oplus \ZZ \vb_{k-1}
=\left(\RR\widehat{\vl}\oplus \RR \vb_{k-m} \oplus \cdots \oplus \RR \vb_{k-1}\right)
\cap \ZZ^{k-1}.
\]
Hence the second condition in \eqref{eq: cond 1} implies that
\[
\frac {c_1}{c_2} q \ell \in \ZZ
\;\Leftrightarrow\;
c_2 | q\ell
\quad\text{and}\quad
\vv'\in \frac D u \Lambda_D.
\]
Since $c_2|t$ from \eqref{eq: cond 1} as well and $\gcd(t,q\ell)=1$, we obtain the fact that $c_2=1$ and $\vw\in \RR\vv\oplus {\tp{\left(0, \frac D u \Lambda_D\right)}}$.
And this shows one inclusion.

\vspace{0.1in}
%%%%%%%%%%%%%%
Conversely, let $C\in \mathfrak D^k_{m,u_0}$ be such that there is $\vv\in \frac C {u_0} \Lambda_C$ satisfying three conditions in \eqref{def of mathfrak C 2}. One can easily extract $D\in \mathfrak D^k_{r,u}$ from the right-bottom $(k-1)\times r$-minor of $C$ by making a primitive matrix which will be $D$, and $u$ is the unique nonzero entry of the first nonzero row of $D$. Fix any $\mathcal R(D)$.

Notice that the third condition is equivalent to saying that
\[
\frac C {u_0}\Lambda_C= \ZZ\vv \oplus \left(\frac C {u_0}\RR^m \cap {\tp{\left(0, \ZZ^{k-1}\right)}}\right)=\ZZ\vv{\tp{\left(0, \frac D u \Lambda_D\right)}}.
\]
Set $v_1=t$ (if $v_1<0$, replace $\vv$ by $-\vv$). From the first and second conditions, $\gcd(t,q)=1$ and $\vv={\tp{(t, t+\ell'_1q, \ldots, t+\ell'_{k-1}q)}}$ for some $\vl'={\tp{(\ell'_1, \ldots, \ell'_{k-1})}}\in \ZZ^{k-1}$.
Since $\mathcal R(D)\simeq \ZZ^{k-1}/\frac D u \Lambda_D$, there is the unique $\vl={\tp{(\ell_1, \ldots, \ell_{k-1})}}\in \mathcal R(D)$ for which $\vl+\frac D u \Lambda_D=\vl'+\frac D u \Lambda_D$ and
\[
\frac C {u_0} \Lambda_C=\ZZ \left(\begin{array}{c} t \\ t+\ell_1q \\ \vdots \\ t+\ell_{k-1}q \end{array}\right) \oplus \left(\begin{array}{c} 0 \\ \\ \dfrac D u \Lambda_D \\[0.1in] \end{array}\right).
\]
This shows that ${\tp{(t, t+\ell_1q, \ldots, t+\ell_{k-1}q)}}$ is primitive. If $\vl={\tp{(0,\ldots, 0)}}$, then it holds that $C=\widetilde{D}$ of the first type described in \eqref{def of mathfrak C 1}. If $\vl\neq{\tp{(0,\ldots, 0)}}$, then $\gcd(\vl, t)=1$ so that $\vl\in P_t(\mathcal R(D))$ and $C=\widetilde{D}$ defined from $(D,t,\vl)$.

\vspace{0.1in}
% weight %%%%%
Now, to establish the theorem, considering the change of variables on $\vy_1$ in Theorem~\ref{higher moment formula 1}, it is left to show that 
\[
\frac {N(\widetilde D, u_0)^d}{u_0^{dm}}
=\frac {N(D, u)^d}{t^d \cdot u^{dr}},
\]
where we put $t=1$ when $\widetilde{D}\in \mathfrak C^k_{m,u_0}$ is of the first type in \eqref{def of mathfrak C 1}. Recall that $N(\widetilde D, u_0)$ is the number of integral solutions $\vz={\tp{(z_1, \ldots, z_m)}}\in \ZZ^m$ modulo $u_0$ for which $\frac {\widetilde{D}} {u_0} \vz \in \ZZ^k$. Equivalently, $N(\widetilde D, u_0)$ is the number of integral solutions $\vz\in \ZZ^m$ modulo $u_0$ for which $D'\vz\in \ZZ^k$ or $D'_{t,\vl}\vz\in \ZZ^k$, respectively.

Based on \eqref{decom}, it follows that $z_1\in t\ZZ$ and there are $(u_0/t)$-number of such $z_1\in \ZZ$ modulo $u_0$.
Moreover, as long as $z_1\in t\ZZ$, $t[D']^1\in \ZZ^{k}$ and we reduce that 
\[\frac D u {\tp{(z_2, \ldots, z_k)}}\in \ZZ^{k-1}
\]
modulo $u_0$, and the number of such ${\tp{(z_2, \ldots, z_k)}}$ is $(u_0/u)^r(N(D,u))$. Therefore
\[
\frac {N(\widetilde D, u_0)^d}{u_0^{dm}}
=\frac {(u_0/t)^d \cdot (u_0/u)^{dr} N(D,u)^d} {u_0^{dm}}
=\frac {N(D,u)^d}{t^d \cdot u^{dr}}.
\]
\end{proof}

\section{Poissonian Behaviour}\label{Poisson}% F{O{L{D
% Subsection
\subsection{Affine Case}%\label{}% F{O{L{D
In this section, we prove Theorem \ref{thmppmainaffine}. Recall that for each $d \geq 2$, we set $\mathcal{S} = \{S_t: t \geq 0\}$ be an increasing family of subsets of $\mathbb{R}^{d}$ with $\vol(S_t) = t$, and for $\Lambda \in Y$ set
\[
    N_t(\Lambda) := \#{\left(S_t \cap \Lambda\right)}.
\]
Denote by $\{N^\lambda(t) : t \geq 0\}$ a Poisson process on the non-negative real line with intensity $\lambda$.

% StatementofTheorem
%\begin{theorem}[]\label{thmppmainaffine}% F{O{L{D
    %The stochastic process $\{N_t(\Lambda) : t \geq 0\}$ converges weakly to $\{N^1(t) : t \geq 0\}$.
%\end{theorem}% F}O}L}D

For $\Lambda \in Y$ we order the lengths of vectors in $\Lambda$ as $0\leq \ell_1 \leq \ell_2 \leq \ell_3 \leq \cdots $, and let $\mathscr{V}_i$ denote the volume of the closed ball of radius $\ell_i$ centered at origin.
If we take $\mathcal{S} = \{B_t: t \geq 0\}$ to be the family of closed balls with $\vol(B_t) = t$ around origin, then
\[
    N_t(\Lambda) = \#\{i : \mathscr{V}_i \leq t\}.
\]
In this specific case Theorem \ref{thmppmainaffine} is equivalent to:
% StatementofTheorem
\begin{theorem}[]\label{thmppequi}% F{O{L{D
    For any fixed $n$, the $n$-dimensional random variable $(\mathscr{V}_1, \ldots, \mathscr{V}_n)$ converges in distribution to the distribution of the first $n$ points of a Poisson process on the non-negative real line with intensity $1$ as $d \to \infty$.
\end{theorem}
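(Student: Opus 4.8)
The plan is to use the classical method of moments (or, equivalently, factorial moments) adapted to stochastic point processes: to show that the point process $\{\mathscr{V}_i\}_i$ on $\RR_{\ge0}$ converges to a Poisson process of intensity $1$, it suffices to show that for every finite collection of disjoint bounded intervals $A_1,\dots,A_m\subseteq\RR_{\ge0}$, the joint factorial moments of the counting variables $\bigl(\#\{i:\mathscr{V}_i\in A_j\}\bigr)_{j=1}^m$ converge as $d\to\infty$ to the corresponding factorial moments of independent Poisson variables with means $\vol(A_j)=|A_j|$. Equivalently, since $N_t(\Lambda)=\#\{i:\mathscr{V}_i\le t\}$ and the $\mathscr{V}_i$ are the values of $\Siegel{1}{\mathbbm 1_{B_t}}$ restricted to half the nonzero lattice vectors (here in the affine setting there is no $\pm$ identification, so no factor of $2$ issue), this reduces to computing $\lim_{d\to\infty}\int_Y \prod_{j=1}^m \binom{N_{A_j}(\Lambda)}{1}\cdots$, i.e.\ the mixed moments of $\Siegel{1}{\mathbbm 1_{S}}$ for indicator functions $S$ of the relevant annular regions.

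The key input is Theorem~\ref{higher moment formula: affine}: for $F=\mathbbm 1_{S_1}\otimes\cdots\otimes\mathbbm 1_{S_k}$ (a product of indicators of bounded Borel sets in $\RR^d$), the $k$-th moment $\int_Y \Siegel{k}{F}\,d\mu_Y$ equals $\prod_j\vol(S_j)$ (the ``main'' term, coming from $r=k$ generic independent vectors) plus the diagonal term $\vol(\bigcap_j S_j)$ (coming from $r=1$, all vectors equal) plus a sum over intermediate admissible matrices $\widetilde{D'}\in\mathfrak A^k_{m,u}$ with $2\le m\le k-1$. The first step is to show that for the specific sets arising here---annuli $S_j=B_{t_j}\setminus B_{t_{j-1}}$ with all volumes bounded and $d\to\infty$---the product term $\prod_j\vol(S_j)$ dominates and all other terms are negligible. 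The diagonal term $\vol(\bigcap S_j)$ vanishes identically when the annuli are disjoint; the main work is bounding the intermediate terms. Here one argues exactly as in Marklof--Str\"ombergsson and in Str\"ombergsson--S\"odergren: each intermediate term is an integral of $F$ composed with a linear map $\widetilde{D'}/u$ of $(\RR^d)^m$ into $(\RR^d)^k$ whose rows are rational combinations; bounding $\mathbbm 1_S$ by $\mathbbm 1$ of a ball of the appropriate (bounded) radius, each such integral is at most a constant (depending on the matrix but not on $d$) raised to a power that is $o(d)$ relative to... more precisely, one shows each term is $\le C^d$ with $C<1$ for fixed matrix shape, or that the number of relevant matrices times the size of each term tends to $0$; the exponential decay in $d$ of $N(\widetilde{D'},u)^d/u^{dm}$ for any non-trivial $\widetilde{D'}$ is what kills these contributions, using $d\to\infty$. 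One then upgrades convergence of all joint moments to convergence in distribution of the finite vector $(\#\{i:\mathscr{V}_i\in A_j\})_j$ to independent Poissons (the Poisson distribution being determined by its moments), and finally to weak convergence of the point process / of $(\mathscr{V}_1,\dots,\mathscr{V}_n)$ by the standard fact that convergence of finite-dimensional distributions of a simple point process, together with the limit being a well-defined point process (here the Poisson process, automatically a.s.\ finite on bounded sets), implies weak convergence.

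For the reduction from ``joint moments of counts over disjoint annuli'' to ``$(\mathscr{V}_1,\dots,\mathscr{V}_n)$ converges'', I would note that $\{(\mathscr{V}_1,\dots,\mathscr{V}_n)\le(x_1,\dots,x_n)\}$ is an event described by how many $\mathscr{V}_i$ fall below each $x_j$, i.e.\ expressible through the counting variables $N_{x_j}(\Lambda)$, and a Poisson process with intensity $1$ has its first $n$ points distributed exactly as the order statistics obtained from the counting description; so convergence of the relevant joint distributions of counts yields convergence of $(\mathscr{V}_1,\dots,\mathscr{V}_n)$. The main obstacle---and the place where the argument is not merely bookkeeping---is the uniform-in-$d$ estimation of the intermediate sum $\sum_{m=2}^{k-1}\sum_u\sum_{\widetilde{D'}\in\mathfrak A^k_{m,u}} \frac{N(\widetilde{D'},u)^d}{u^{dm}}\int_{(\RR^d)^m}F\!\left(\frac{\widetilde{D'}}{u}(\vy_1,\dots,\vy_m)\right)d\vy$: one must show this tends to $0$ as $d\to\infty$ for $F$ an indicator of a set of bounded volume, handling simultaneously the $u$-summation and the (growing, but controlled) number of admissible matrices of each type, and one must track that the bounds are uniform over the finitely many annuli and over $k$ up to the fixed number of moments needed. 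This is precisely the affine analogue of the estimates in Section~7 of Marklof--Str\"ombergsson and in Str\"ombergsson--S\"odergren, and I expect to import their volume-of-ellipsoid and geometry-of-numbers bounds essentially verbatim, the new point being only that our moment formula has the clean shape above.
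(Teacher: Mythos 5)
Your proposal is correct in substance and runs on the same engine as the paper's argument: the affine higher moment formula (Theorem~\ref{higher moment formula: affine}) applied to products of indicators, the Rogers--S\"odergren estimates (Lemma~\ref{lemestimatesfromrogers}) to kill every term of \eqref{eq 1: higher moment formula: affine} except those indexed by $u=1$ matrices with entries in $\{0,1\}$ and exactly one nonzero entry per row, and the method of moments plus Billingsley to upgrade to weak convergence of the process and hence of $(\mathscr{V}_1,\dots,\mathscr{V}_n)$. The genuine difference is where the combinatorial bookkeeping of the surviving terms is placed. You use joint \emph{factorial} moments of counts in disjoint annuli, whose Poisson limits are simply $\prod_j |A_j|^{k_j}$; the paper (following S\"odergren) computes the ordinary joint moments $\EE[\prod_i N_{V_i}]$ for nested balls, identifies the surviving matrices as the set $\mathfrak M^k$, and matches them to the Poisson-process moments via an explicit bijection with partitions of $\{1,\dots,k\}$ (Lemmas~\ref{thmsodmainterm} and \ref{lemsodlem3}). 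Your route buys a cleaner limit on the Poisson side, but it hides a step you do not spell out: $\Siegel{k}{F}$ sums over \emph{all} $k$-tuples of lattice points, repetitions included, so extracting factorial moments requires an inclusion--exclusion, or equivalently the identification of exactly which matrices $\widetilde{D'}$ encode repeated indices --- and these are precisely the $u=1$, one-nonzero-entry-per-row matrices that the paper's partition bijection organizes; the two bookkeeping burdens are the same work on different sides of the identity. One imprecision to fix: for the discarded terms with $u=1$ the coefficient $N(\widetilde{D'},u)^d/u^{dm}$ equals $1$ and gives no decay; there the decay in $d$ comes from the integrals themselves (parts (ii) and (iii) of Lemma~\ref{lemestimatesfromrogers}, i.e.\ the overlap-of-balls volume estimates), not from the coefficient as your sketch suggests. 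Since you explicitly import those estimates, this does not break the argument, but the attribution should be corrected.
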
% F}O}L}D

In this form the theorem determines the limit distribution of lengths of vectors in a random lattice as $d \to \infty$.

We will now prove the above general Theorem \ref{thmppmainaffine} by proving a joint moment formula for $N_t(\cdot)$.
Let $k \geq 1$ and $0 \leq V_1 \leq \cdots \leq V_k$.
We use, by abuse of notation, $N_i( \cdot )$ to denote $N_{V_i}( \cdot )$.
Note that $N_i( \cdot ) = \widehat{\rho_i}( \cdot )$, where $\rho_i$ is the characteristic function of $S_{V_i}$.
We calculate, following S\"{o}dergren \cite{Sod}, the `main term' of the joint moment of $N_i$'s.
In this regard we apply Theorem \ref{higher moment formula: affine} with $F = \prod_{i = 1}^{k}\rho_i$ defined as
%%%
\[
    F \begin{pmatrix}
        \bm{y}_1 \\
        \vdots \\
        \bm{y}_k
    \end{pmatrix} = \prod_{i = 1}^{k} \rho_i(\bm{y}_i).
\]
We consider the sub-collection of the RHS of \eqref{eq 1: higher moment formula: affine} consisting of terms corresponding to $m = 1$ and $k$, and terms from the sum corresponding to $u = 1$ and $\widetilde{D} \in \mathfrak{A}_{m, 1}^{k}$ satisfying that $\widetilde{D}$ has exactly one non-zero entry in each row, with all non-zero entries of $\widetilde{D}$ being of modulus 1.
The set of such matrices $\widetilde{D} \in \mathfrak{A}_{m, 1}^{k}$ is $\mathfrak{M}^k$, where
% UnnumberedAlign
\begin{align*}
    \mathfrak{R}_1^k &= \bigcup_{2 \leq m \leq k-1} {\left({\left(\bigcup_{u \geq 2} \mathfrak{A}_{m, u}^k\right)} \cup {\left\{\widetilde{D} = {\left(\widetilde{D}_{ij}\right)} \in \mathfrak{A}_{m, 1}^k : \exists |\widetilde{D}_{ij}| \geq 2\right\}}\right)}, \\
    \mathfrak{R}_{2}^k &= {\left\{\widetilde{D} \in {\left(\bigcup_{2 \leq m \leq k-1} \mathfrak{A}_{m, 1}^k\right)} \smallsetminus \mathfrak{R}_1^k: \substack{\displaystyle\exists~\text{row such that at least} \\ \\ \displaystyle\text{two entries are non-zero}}\right\}}, \\
    \mathfrak{M}^k &= {\left(\bigcup_{2 \leq m \leq k-1} \mathfrak{A}_{m, 1}^k \smallsetminus {\left(\mathfrak{R}_1^k \cup \mathfrak{R}_{2}^k\right)}\right)} \cup \left\{\mathrm{Id}_k, \begin{pmatrix}
        1 \\
        \vdots \\
        1
    \end{pmatrix}\right\}.
\end{align*}
Here, we want to mention that we will use the same notations $\mathfrak R^k_1$, $\mathfrak R^k_2$ and $\mathfrak M^k$ for the analogous (but different) sets in each subsection (see Subsection \ref{subsection Congruence case} and Section \ref{section Limit in distribution}). This will hopefully cause no confusion.

We denote this sub-collection of the RHS of \eqref{eq 1: higher moment formula: affine} as $M_{d, k}^{\text{affine}}$ and the rest of the terms as $R_{d, k}^{\text{affine}}$.
That is,
\[
    \mathbb{E}{\left(\prod_{i = 1}^{k} N_i\right)} = M_{d, k}^{\text{affine}} + R_{d, k}^{\text{affine}}
\]
where
% Equation
\begin{equation}\label{eqmdkaffine}
        M_{d, k}^{\text{affine}} = \sum_{\widetilde{D} \in \mathfrak{M}^{k}} \int_{(\mathbb{R}^{d})^{m}} \prod_{i= 1}^{k}\rho_i{\left(\widetilde{D} \begin{pmatrix}
                        \vy_1 \\
                        \vy_2 \\
                        \vdots \\
                        \vy_{m}
            \end{pmatrix}\right)} \d{\vy_1}\d{\vy_2} \cdots \d{\vy_{m}}
\end{equation}
and
% Equation
\begin{equation}\label{eqrdkaffine}
    R_{d, k}^\text{{affine}} = \sum_{\widetilde{D} \in \mathfrak{R}_1^{k} \cup \mathfrak{R}_2^{k}} \frac{N(\widetilde{D}, u)^d}{u^{dm}} \int_{(\mathbb{R}^{d})^{m}} \prod_{i= 1}^{k}\rho_i{\left(\frac{1}{u}\widetilde{D} \begin{pmatrix}
                        \vy_1 \\
                        \vy_2 \\
                        \vdots \\
                        \vy_{m}
            \end{pmatrix}\right)} \d{\vy_1}\d{\vy_2} \cdots \d{\vy_{m}}.
\end{equation}

Let $(\alpha, \beta)$ be a division of $\{1, \ldots, k\}$, i.e., $\alpha = \{\alpha_1 < \cdots < \alpha_m\}$ and $\beta = \{\beta_1 < \cdots < \beta_{k - r}\}$ are complementary subsets of $\{1, \ldots, k\}$ with $\alpha \neq \varnothing$.
Define
\[
    \mathfrak{M}_{\alpha, \beta}^{\text{affine}} := \left\{\widetilde{D} \in \mathfrak{M}^k: I_{\widetilde{D}} = \alpha\right\}
\]
and let $M_{\alpha, \beta}^{\text{affine}}$ denote the cardinality of $\mathfrak{M}_{\alpha, \beta}^{\text{affine}}$.
We allow for the case $(\alpha, \beta) = {\left(\{1, \ldots, k\}, \varnothing\right)}$, in which case $\mathfrak{M}_{\alpha, \beta}^k = \{\mathrm{Id}_k\}$.
Thus we can rewrite \eqref{eqmdkaffine} as:
% Equation
\begin{equation}\label{eqmdkaffinenew}
    M_{d, k}^{\text{{affine}}} = \sum_{(\alpha, \beta)} \sum_{\widetilde{D} \in \mathfrak{M}_{\alpha, \beta}^{\text{affine}}} \int_{(\mathbb{R}^{d})^{m}} \prod_{i= 1}^{k}\rho_i{\left(\widetilde{D} \begin{pmatrix}
                        \vy_1 \\
                        \vy_2 \\
                        \vdots \\
                        \vy_{m}
            \end{pmatrix}\right)} \d{\vy_1}\d{\vy_2} \cdots \d{\vy_{m}}.
\end{equation}
where the outer sum is over all possible divisions $(\alpha, \beta)$ of $\{1, \ldots, k\}$.
% It can be checked that
% % Equation
% \begin{equation}\label{eqmalbe}
%     M_{\alpha, \beta} = {\left(\prod_{i = 2}^{r} i^{\alpha_{i + 1} - \alpha_i - 1}\right)} m^{k - \alpha_r}.
% \end{equation}

% Remark
\begin{remark}\label{rementryonly1}% F{O{L{D
    It follows from the definition of $\widetilde{D}$ that for $\widetilde{D} \in \mathfrak{M}^k$, the non-zero entries of the matrix $\widetilde{D}$ can only be 1.
Since $\widetilde{D} \notin \mathfrak R^k_1$, we already know that entries of $\widetilde{D}\in \{0, \pm 1\}$. 
The fact that $-1$ is not possible for entries of $\widetilde{D}$ comes from notations in Theorem \ref{higher moment formula: affine}.
%%%
Suppose that there is a row having $-1$ in its entries. Let $(x_1, x_2, \ldots, x_{m})$ be such a row.
If $x_1=-1$, since $x_1=1 - \sum_{\ell=2}^{m} x_\ell$, there should be at least one nonzero element in $(x_2, \ldots, x_{m})$, which contradicts to the fact that each row, there is only one nonzero entry. One can also obtain a contradiction when one assumes that there is some $2\le i_0\le m$ for which $x_{i_0}=-1$.

\end{remark}% F}O}L}D

% Lemma
\begin{lemma}[]\label{thmsodmainterm}% F{O{L{D
    With notations as above
    % Equation
    \begin{equation}\label{eqsodmainterm}
        M_{d, k}^{\text{\emph{affine}}} = \sum_{(\alpha, \beta)} M_{\alpha, \beta}^{\text{\emph{affine}}} \prod_{i = 1}^{m} V_{\alpha_i}.
    \end{equation}
\end{lemma}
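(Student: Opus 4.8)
# Proof proposal for Lemma~\ref{thmsodmainterm}

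The plan is to evaluate each integral appearing in \eqref{eqmdkaffinenew} explicitly, show that each one equals $\prod_{i=1}^m V_{\alpha_i}$ (independently of which particular matrix $\widetilde{D'}\in\mathfrak M^k_{\alpha,\beta}$ we picked), and then sum over the $M^{\text{affine}}_{\alpha,\beta}$ matrices in each block. So fix a division $(\alpha,\beta)$ and a matrix $\widetilde{D'}\in\mathfrak M^k_{\alpha,\beta}$, with $I_{\widetilde{D'}}=\alpha=\{\alpha_1<\cdots<\alpha_m\}$. By Remark~\ref{rementryonly1}, every row of $\widetilde{D'}$ has exactly one non-zero entry and that entry equals $1$; thus the linear map $\widetilde{D'}\colon(\RR^d)^m\to(\RR^d)^k$ sends $(\vy_1,\dots,\vy_m)$ to a $k$-tuple whose $i$-th component is $\vy_{c(i)}$ for some function $c\colon\{1,\dots,k\}\to\{1,\dots,m\}$. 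The admissibility conditions in Notation~\ref{notation}(2) (applied to $\widetilde{D'}\in\mathfrak D^k_{m,1}$ with $u=1$) force, for the rows indexed by $\alpha_j$, that row $\alpha_j$ is $\ve_j$; hence $c(\alpha_j)=j$, and moreover condition (ii) gives $c(i)\geq j$ fails to be pinned down for $i<\alpha_j$ — what matters is only that $c$ restricted to $\alpha$ is the order-isomorphism $\alpha_j\mapsto j$.

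First I would compute the integral
\[
\int_{(\RR^d)^m}\prod_{i=1}^k\rho_i\!\left(\widetilde{D'}\begin{pmatrix}\vy_1\\\vdots\\\vy_m\end{pmatrix}\right)\d\vy_1\cdots\d\vy_m
=\int_{(\RR^d)^m}\prod_{i=1}^k\rho_i(\vy_{c(i)})\,\d\vy_1\cdots\d\vy_m .
\]
Grouping the product by the value of $c$, this factors as $\prod_{j=1}^m\int_{\RR^d}\prod_{i\,:\,c(i)=j}\rho_i(\vy_j)\,\d\vy_j$. Since the sets $S_{V_i}$ are nested (the family $\mathcal S$ is increasing) and $\rho_i=\mathbbm 1_{S_{V_i}}$, for a fixed $j$ we have $\prod_{i:c(i)=j}\rho_i=\mathbbm 1_{S_{V_{i^\ast(j)}}}$ where $i^\ast(j)=\min\{i:c(i)=j\}$ realizes the smallest radius among that group. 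The key observation is that $i^\ast(j)=\alpha_j$: the row indexed by $\alpha_j$ maps to $\vy_j$ (so $\alpha_j$ is in the group $c^{-1}(j)$), and no index $i<\alpha_j$ can lie in $c^{-1}(j)$ — this is exactly what the block structure of $\mathfrak M^k_{\alpha,\beta}$ and the admissibility condition (ii) guarantee (an index $i<\alpha_j$ with $c(i)=j$ would force a non-zero entry in column $j$ above row $\alpha_j$, violating admissibility, or it would put $i$ into the ``leading'' set forcing $i\in\alpha$). Therefore $\int_{\RR^d}\prod_{i:c(i)=j}\rho_i(\vy_j)\,\d\vy_j=\vol(S_{V_{\alpha_j}})=V_{\alpha_j}$, and the whole integral equals $\prod_{j=1}^m V_{\alpha_j}$.

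Summing over $\widetilde{D'}\in\mathfrak M^k_{\alpha,\beta}$ contributes $M^{\text{affine}}_{\alpha,\beta}\prod_{i=1}^m V_{\alpha_i}$, and summing over all divisions $(\alpha,\beta)$ — including the degenerate one $(\{1,\dots,k\},\varnothing)$ with $\widetilde{D'}=\mathrm{Id}_k$, which gives $\prod_{i=1}^k V_i$, and the division $(\{1\},\{2,\dots,k\})$ coming from the all-ones column, which gives $V_1$ — yields \eqref{eqsodmainterm}. The main obstacle I anticipate is the bookkeeping in the previous paragraph: carefully extracting from the defining conditions of $\mathfrak A^k_{m,1}$ and the exclusion of $\mathfrak R^k_1\cup\mathfrak R^k_2$ the precise combinatorial fact that, within each row-group $c^{-1}(j)$, the index $\alpha_j$ is the one with the smallest radius $V_{\alpha_j}$. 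Everything else is a routine Fubini computation using only that $\mathcal S$ is increasing with $\vol(S_t)=t$.
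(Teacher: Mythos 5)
Your proposal is correct and follows essentially the same route as the paper: both arguments use that each row of $\widetilde{D'}\in\mathfrak M^k_{\alpha,\beta}$ has a single non-zero entry equal to $1$, reduce the integral by Fubini to a product over columns, and invoke the admissibility condition (column $j$ vanishes above row $\alpha_j$) together with the nestedness of the $S_{V_i}$ to collapse each column's product of indicators to $\mathbbm{1}_{S_{V_{\alpha_j}}}$, giving $\prod_{j=1}^m V_{\alpha_j}$. The paper phrases this via the indices $\lambda_\ell$ with $\widetilde{d'}_{\beta_\ell,\lambda_\ell}=1$ rather than your map $c$, but the computation is identical.
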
% F}O}L}D
% Proof
\begin{proof}% F{O{L{D
    Consider any matrix $\widetilde{D} = \left(\widetilde{D}_{ij}\right) \in \mathfrak{M}_{\alpha, \beta}^\text{{affine}}$ and let $\lambda_\ell$ be such that $\widetilde{D}_{\beta_\ell, \lambda_\ell} = 1$, $1 \leq \ell \leq k - m$.
    Then, as $V_i$'s are increasing, the following calculation finishes the proof
    % UnnumberedAlign
    \begin{align*}
        &\int_{(\mathbb{R}^{d})^{m}} \prod_{i = 1}^{k} \rho_i\left(\widetilde{D}\begin{pmatrix}
                \vy_1 \\
                \vy_2 \\
                \vdots \\
                \vy_{m}
        \end{pmatrix}\right) \d{\vy_1} \d{\vy_2} \cdots \d{\vy_{m}} 
        = \int_{(\mathbb{R}^{d})^{m}} \prod_{i = 1}^k \rho_i{\left(\sum_{j = 1}^{m} \widetilde{D_{ij}} \vy_j\right)} \d{\vy_1} \cdots \d{\vy_{m}} \\
=& \int_{(\mathbb{R}^{d})^{m}} \prod_{i = 1}^{m} \rho_{\alpha_i}(\vy_i) \prod_{\ell = 1}^{k - m} \rho_{\beta_\ell}(\bm{y}_{\lambda_\ell}) \d{\vy_1} \cdots \d{\vy_{m}} 
= \int_{(\mathbb{R}^{d})^m} \prod_{i = 1}^{m} \rho_{\alpha_i}(\bm{y}_i) = \prod_{i = 1}^{m} V_{\alpha_i}.
    \end{align*}
\end{proof}% F}O}L}D

We shall now mention some estimates regarding $R_{d, k}^{\text{affine}}$.
These estimates are originally due to Rogers \cite{Rogers55-2, Rogers56-2} and they were generalized to Lemma \ref{lemestimatesfromrogers} (below) by S\"{o}dergren \cite{Sod}.
For $D \in \mathfrak{D}_{r, u}^k$ set
\[
    I(D, u) := \int_{(\mathbb{R}^{d})^r} \prod_{i = 1}^{k} \rho_i{\left(\frac{1}{u}D \begin{pmatrix}
                \vy_{1} \\
                \vdots \\
                \vy_{r}
    \end{pmatrix}\right)} \d{\vy_1} \cdots \d{\vy_{r}}.
\]
% For $\widetilde{D} \in \mathfrak{A}_{m, u}^k$ set
% \[
%     I(\widetilde{D}, u) := \int_{(\mathbb{R}^{d})^m} \prod_{i = 1}^{k} \rho_i{\left(\frac{1}{u}\widetilde{D} \begin{pmatrix}
%                 \vy_{1} \\
%                 \vdots \\
%                 \vy_{m}
%     \end{pmatrix}\right)} \d{\vy_1} \cdots \d{\vy_{m}}.
% \]
% Lemma
\begin{lemma}[Estimates from \cite{Rogers55-2}, \cite{Rogers56-2} and \cite{Sod}]\label{lemestimatesfromrogers}% F{O{L{D
    For $d > [k^2/4] + 2$
    % Enumerate
    \begin{enumerate}[(i)]%[label=(\roman*),font=\normalfont,before=\normalfont,leftmargin=1.3em]
        \item $\displaystyle\sum_{r = 1}^{k} \sum_{u = 2}^{\infty} \sum_{D \in \mathfrak{D}_{r, u}^k} \frac{N(D, u)^d}{u^{dr}} \cdot I(D, u) = O\left(2^{-d}\right)$, \\
        \item $\displaystyle\sum_{r = 1}^{k} \sum_{D \in \mathfrak{D}_{r, 1}^{k, 1}} I(D, u) = O\left(2^{-d}\right)$, where $\mathfrak{D}_{r, 1}^{k, 1} \subseteq \mathfrak{D}_{r, 1}^k$ contains matrices $D$ such that $\max |d_{ij}| \geq 2$, \\
        \item $\displaystyle\sum_{r = 1}^{k} \sum_{D \in \mathfrak{D}_{r, 1}^{k, 2}} I(D, u) = O\left({\left(3/4\right)}^{d/2}\right)$, where $\mathfrak{D}_{r, 1}^{k, 2} \subseteq \mathfrak{D}_{r, 1}^k$ contains matrices $D$ such that $\max |d_{ij}| = 1$ and at least one row of $D$ has at least two non-zero entries.
    \end{enumerate}
\end{lemma}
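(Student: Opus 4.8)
The three estimates are, up to notation, those of Rogers \cite{Rogers55-2,Rogers56-2} as refined by S\"odergren \cite{Sod}, and the plan is to reproduce that argument while checking along the way that the only properties of the sets $S_{V_i}$ which are used are $0\le\rho_i\le 1$ and $\int_{\RR^d}\rho_i=V_i$ (together, for part (iii), with a roundness property of $S_{V_i}$). Recall the shape of $I(D,u)$: for $D\in\mathfrak D^k_{r,u}$ with pivot rows $I_D=\{i_1<\cdots<i_r\}$ one has $\tfrac1u(D\vy)_{i_\ell}=\vy_\ell$, so the factors $\rho_{i_\ell}$ merely restrict $\vy_\ell\in S_{V_{i_\ell}}$, while each of the remaining $k-r$ rows imposes a genuine linear constraint $\tfrac1u\sum_j d_{ij}\vy_j\in S_{V_i}$.

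The first ingredient is the arithmetic weight. Since the $\gcd$ of the entries of $D$ equals $1$, reduction mod $u$ gives $N(D,u)/u^{r}=|\mathrm{im}(D\bmod u)|^{-1}\le\bigl(\prod_{p\mid u}p\bigr)^{-1}$, so $N(D,u)^d/u^{dr}\le 2^{-d}$ whenever $u\ge 2$. The second ingredient is a bound on $I(D,u)$ obtained by integrating the variables in a suitable order (Rogers' combinatorial lemma): each linear constraint contributes a factor proportional to the corresponding $V_i$, and a constraint whose leading coefficient has absolute value $\ge 2$ --- or, more generally, $\ge u$ --- contributes an extra factor $\le 2^{-d}$ from the Jacobian of the change of variables that solves it. For part (i) one combines $N(D,u)^d/u^{dr}\le 2^{-d}$ with this $I(D,u)$-bound and sums over all $D$ and all $u\ge 2$; the sum over the off-pivot entries converges thanks to the coefficient-dependent exponential gains, and the remaining sum over $u$ and over the combinatorial data (the choice of $I_D$, of pivot columns, and of the matching used in the integration) converges exactly under Rogers' hypothesis $d>[k^2/4]+2$, which is the threshold making the relevant $\zeta$-type series $\sum(\mathrm{index})^{-d}$ converge. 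For part (ii) the matrices have $u=1$ but some coefficient of absolute value $\ge 2$, so each already pays a factor $\le 2^{-d}$; summing over these (infinitely many) matrices again converges when $d>[k^2/4]+2$.

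Part (iii) is the one genuinely geometric point. Here $u=1$, all entries lie in $\{0,\pm1\}$, and some off-pivot row involves at least two of the variables, so the corresponding double integral has the form $\iint \rho_a(\vx)\,\rho_b(\vy)\,\rho_c(\pm\vx\pm\vy)\,\d\vx\,\d\vy$ with $a,b,c$ distinct; this is not controlled by the crude volume bound and must be estimated using that $S_{V_c}$ is (essentially) a ball: the measure of $\{(\vx,\vy):\vx,\vy,\vx\pm\vy\in B\}$ is at most $(3/4)^{d/2}$ times the square of the measure of $B$ (in dimension one, $\{(x,y)\in[-1,1]^2:x+y\in[-1,1]\}$ is a hexagon of area $3$ against the square of area $4$, and this tensorises, $(3/4)^{d/2}$ being a convenient uniform bound). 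As for fixed $k$ there are only finitely many $D$ of this type, the whole sum is $O((3/4)^{d/2})$. The main obstacle is precisely this last step: for the star-shaped sets of Theorem~\ref{Strom-Sod Brownian: affine} one still obtains an exponentially small bound but with a worse base $c<1$ in place of $(3/4)^{1/2}$, which is all the applications require (where only $R^{\mathrm{affine}}_{d,k}\to 0$ is used); the other delicate point is the bookkeeping that makes the sums in (i) and (ii) converge exactly at the stated threshold. Both are carried out in \cite{Sod}, whose treatment we follow.
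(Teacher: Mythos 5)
Your proposal is correct and follows essentially the same route as the paper, whose proof of this lemma consists of reducing all three estimates to S\"odergren's Proposition~2 and Lemmas~1--2 (themselves resting on Rogers \cite{Rogers55-2}, \S9 and \cite{Rogers56-2}, \S4): the arithmetic bound $N(D,u)^d/u^{dr}\le u^{-d}\le 2^{-d}$ for (i), the gain from large coefficients for (ii), and Rogers' ball estimate for (iii). The paper's only addition is the remark that (i) uses nothing beyond $N(D,u)^d/u^{dr}\le 1/u^d$ (so the estimate transfers to the congruence weights), and your caveat that the $(3/4)^{d/2}$ bound in (iii) is proved in the references only for balls --- and would need a separate justification for the more general sets to which the lemma is later applied --- is a fair observation that the paper does not address.
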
% F}O}L}D
% \begin{lemma}[Estimates from \cite{Rogers55-2}, \cite{Rogers56-2} and \cite{Sod}]\label{lemestimatesfromrogers}% F{O{L{D
%     For $d > [k^2/4] + 2$ with implied constants depending only on the volumes $V_1, \ldots, V_k$ but not on $d$, we have
%     % Enumerate
%     \begin{enumerate}[(i)]%[label=(\roman*),font=\normalfont,before=\normalfont,leftmargin=1.3em]
%         \item $\displaystyle\sum_{\widetilde{D} \in \mathfrak{R}_1^k} \frac{N(\widetilde{D}, u)^d}{u^{dm}} \int_{(\mathbb{R}^{d})^m} I(D, u) = O\left(2^{-d}\right)$, \\
%         \item $\displaystyle\sum_{D \in \mathfrak{R}_2^k} \int_{(\mathbb{R}^{d})^m} I(D, 1) = O\left({\left(3/4\right)}^{d/2}\right)$.

%             Here the implied constants depend on the volumes $V_1, \ldots, V_k$ but not on $d$.
%     \end{enumerate}
% \end{lemma}% F}O}L}D
% Proof
\begin{proof}% F{O{L{D
    The proof of this lemma can be found in \cite[Proposition 2, Lemma 1 and Lemma 2]{Sod}.
    The main ingredients in S\"odergren's proof are \cite[\S9]{Rogers55-2} and the contents of \cite[\S4]{Rogers56-2}.

    We remark that we only need the fact that $N(D,u)^d/u^{dr}\le 1/u^d$ to prove Property (i). Hence, one can use Lemma~\ref{lemestimatesfromrogers} for applications with the space $Y_{\vp/q}$ as well as the space $Y$ in Section~\ref{Poisson} and Section~\ref{section Limit in distribution}.
\end{proof}% F}O}L}D

Rogers' estimate shows that:
% StatementofLemma
\begin{lemma}[]\label{thmremainderofaffine}% F{O{L{D
    \[
        R_{d, k}^{\text{\emph{affine}}} = O{\left({\left(3/4\right)}^{d/2}\right)}.
    \]
\end{lemma}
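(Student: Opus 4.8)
The plan is to bound $R_{d,k}^{\text{affine}}$ by recognizing that every matrix $\widetilde{D'}$ contributing to \eqref{eqrdkaffine} falls into one of the three categories covered by Lemma~\ref{lemestimatesfromrogers}, and then to invoke that lemma termwise. Concretely, $\mathfrak R_1^k \cup \mathfrak R_2^k$ is the disjoint union of: (a) matrices $\widetilde{D'} \in \mathfrak A^k_{m,u}$ with $u \geq 2$; (b) matrices $\widetilde{D'} \in \mathfrak A^k_{m,1}$ with some $|\widetilde{d'}_{ij}| \geq 2$; and (c) matrices $\widetilde{D'} \in \mathfrak A^k_{m,1}$ with all entries in $\{0,\pm 1\}$ but with at least one row having two or more non-zero entries. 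Category (a) is absorbed into case (i) of Lemma~\ref{lemestimatesfromrogers} (using $\mathfrak A^k_{m,u} \subseteq \mathfrak D^k_{m,u}$ and, crucially, the remark at the end of the proof of that lemma that only $N(D,u)^d/u^{dr} \leq 1/u^d$ is needed); category (b) is contained in the index set $\mathfrak D^{k,1}_{r,1}$ of case (ii); and category (c) is contained in $\mathfrak D^{k,2}_{r,1}$ of case (iii).

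First I would make the identification between the integral $I(\widetilde{D'},u)$ appearing in \eqref{eqrdkaffine} and the quantity $I(D,u)$ of Lemma~\ref{lemestimatesfromrogers} precise: with $F = \prod_{i=1}^k \rho_i$, we have
\[
\int_{(\RR^d)^m} \prod_{i=1}^k \rho_i\!\left(\tfrac1u \widetilde{D'}\begin{pmatrix}\vy_1\\ \vdots\\ \vy_m\end{pmatrix}\right)\d\vy_1\cdots\d\vy_m = I(\widetilde{D'},u),
\]
so that \eqref{eqrdkaffine} is literally $\sum_{\widetilde{D'} \in \mathfrak R_1^k \cup \mathfrak R_2^k} \frac{N(\widetilde{D'},u)^d}{u^{dm}} I(\widetilde{D'},u)$. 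Next I would split this sum into the three pieces (a), (b), (c). For (a), since each $\widetilde{D'}$ with $u \geq 2$ occurs among the $D \in \mathfrak D^k_{m,u}$ indexing case (i), non-negativity of all the integrands gives $\sum_{(a)} \leq \sum_{r=1}^k \sum_{u=2}^\infty \sum_{D \in \mathfrak D^k_{r,u}} \frac{N(D,u)^d}{u^{dr}} I(D,u) = O(2^{-d})$. For (b), the factor $N(\widetilde{D'},1)^d/1 = N(\widetilde{D'},1)^d \leq 1$ (indeed $N(D,1) = 1$), so $\sum_{(b)} \leq \sum_{r=1}^k \sum_{D \in \mathfrak D^{k,1}_{r,1}} I(D,1) = O(2^{-d})$ by case (ii). For (c), similarly $\sum_{(c)} \leq \sum_{r=1}^k \sum_{D \in \mathfrak D^{k,2}_{r,1}} I(D,1) = O((3/4)^{d/2})$ by case (iii). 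Adding the three bounds and noting $(3/4)^{d/2}$ dominates $2^{-d}$ for large $d$ yields $R_{d,k}^{\text{affine}} = O((3/4)^{d/2})$, valid for $d > [k^2/4]+2$ as required by Lemma~\ref{lemestimatesfromrogers}.

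The only genuine point requiring care — and the place I'd expect a referee to look hardest — is the combinatorial claim that the complement $(\bigcup_m \mathfrak A^k_{m,1}) \setminus (\mathfrak R_1^k \cup \mathfrak R_2^k)$ contains only matrices with exactly one non-zero entry per row and that entry equal to $+1$ (so that $\mathfrak M^k$ is exactly what is excluded), together with the inclusions $\mathfrak A^k_{m,u} \subseteq \mathfrak D^k_{m,u}$ and the containments of categories (b), (c) in $\mathfrak D^{k,1}_{r,1}$, $\mathfrak D^{k,2}_{r,1}$. The inclusion $\mathfrak A^k_{m,u} \subseteq \mathfrak D^k_{m,u}$ is recorded in Notation~\ref{notation widetilde D'}(3); the sign statement is Remark~\ref{rementryonly1}; the rest is a direct unwinding of the definitions of $\mathfrak R_1^k$, $\mathfrak R_2^k$ and $\mathfrak D^{k,i}_{r,1}$. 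I would therefore phrase the proof as: "By the definitions of $\mathfrak R_1^k$ and $\mathfrak R_2^k$ together with $\mathfrak A^k_{m,u} \subseteq \mathfrak D^k_{m,u}$, every matrix indexing a term of \eqref{eqrdkaffine} lies in $\bigcup_u \mathfrak D^k_{r,u}$ with $u \geq 2$, or in $\mathfrak D^{k,1}_{r,1}$, or in $\mathfrak D^{k,2}_{r,1}$; applying Lemma~\ref{lemestimatesfromrogers}(i)--(iii) to these three subsums, using $N(\widetilde{D'},u)^d/u^{dm} \leq 1/u^d \leq 1$ and non-negativity of the integrands, gives the claimed bound."
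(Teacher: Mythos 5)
Your proof is correct and follows exactly the paper's argument: the paper likewise bounds $R_{d,k}^{\text{affine}}$ by the sum of the left-hand sides of Lemma~\ref{lemestimatesfromrogers}, merely stating in one line what you spell out via the three containments (a), (b), (c). The extra detail you supply (the inclusions $\mathfrak A^k_{m,u}\subseteq\mathfrak D^k_{m,u}$, non-negativity of the integrands, and $N(D,u)^d/u^{dm}\le 1$) is exactly the right justification and matches the paper's intent.
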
% F}O}L}D
% Proof
\begin{proof}% F{O{L{D
    It follows from \eqref{eqrdkaffine} that $R_{d, k}^{\text{affine}}$ is less than the sum of LHS's in Lemma \ref{lemestimatesfromrogers}.
\end{proof}% F}O}L}D

The lemmas above combine to give us the following theorem:
% StatementofTheorem
\begin{theorem}[]\label{thmexpniconv}% F{O{L{D
    % Equation
    \begin{equation}\label{eqexpniconv}
        \mathbb{E}{\left(\prod_{i = 1}^{k} N_i\right)} \to \sum_{(\alpha, \beta)} M_{\alpha, \beta}^{\text{\emph{affine}}} \prod_{i = 1}^{r} V_{\alpha_i}
    \end{equation}
    as $d \to \infty$.
\end{theorem}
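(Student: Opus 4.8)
The plan is to read Theorem~\ref{thmexpniconv} off the decomposition $\mathbb{E}\bigl(\prod_{i=1}^{k}N_i\bigr)=M_{d,k}^{\text{affine}}+R_{d,k}^{\text{affine}}$ already produced above, controlling the two pieces by Lemma~\ref{thmsodmainterm} and Lemma~\ref{thmremainderofaffine} respectively. First I would observe that, since $N_i=\Siegel{1}{\rho_i}$ with $\rho_i$ the indicator of $S_{V_i}$, the product $\prod_{i=1}^{k}N_i$ equals $\Siegel{k}{\prod_{i=1}^{k}\rho_i}$ for the tensor function $\prod_{i=1}^{k}\rho_i$, so for every $d\ge k$ the higher moment formula of Theorem~\ref{higher moment formula: affine} applies and, after splitting the right-hand side of \eqref{eq 1: higher moment formula: affine} into the main collection and its complement, gives the exact identity
\[
\mathbb{E}\!\left(\prod_{i=1}^{k}N_i\right)=M_{d,k}^{\text{affine}}+R_{d,k}^{\text{affine}},
\]
with $M_{d,k}^{\text{affine}}$ and $R_{d,k}^{\text{affine}}$ as in \eqref{eqmdkaffine} and \eqref{eqrdkaffine}.

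For the main term, Lemma~\ref{thmsodmainterm} already evaluates $M_{d,k}^{\text{affine}}=\sum_{(\alpha,\beta)}M_{\alpha,\beta}^{\text{affine}}\prod_{i=1}^{m}V_{\alpha_i}$, the sum ranging over all divisions $(\alpha,\beta)$ of $\{1,\ldots,k\}$. The only additional point to record is that the combinatorial count $M_{\alpha,\beta}^{\text{affine}}=\#\mathfrak{M}_{\alpha,\beta}^{\text{affine}}$ does not depend on $d$: the conditions defining $\mathfrak{M}^{k}$ (exactly one unit entry in each row, pivot set $I_{\widetilde{D'}}=\alpha$, $u=1$) are purely combinatorial and only require $m\le k\le d$. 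Hence $M_{d,k}^{\text{affine}}$ is already constant in $d$ for $d\ge k$ and equal to the right-hand side of \eqref{eqexpniconv}.

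It therefore suffices to show $R_{d,k}^{\text{affine}}\to 0$, which is Lemma~\ref{thmremainderofaffine}: expanding \eqref{eqrdkaffine}, each $\widetilde{D'}\in\mathfrak{R}_1^{k}$ with $u\ge 2$ contributes at most $u^{-d}\,I(\widetilde{D'},u)$ and is absorbed into the sum of Lemma~\ref{lemestimatesfromrogers}(i) (using $N(\widetilde{D'},u)^d/u^{dm}\le u^{-d}$); each $\widetilde{D'}\in\mathfrak{R}_1^{k}$ with $u=1$ has an entry of modulus $\ge 2$ and is absorbed into (ii); and each $\widetilde{D'}\in\mathfrak{R}_2^{k}$ has unit-modulus entries but a row with two nonzero entries and is absorbed into (iii). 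Combining, $R_{d,k}^{\text{affine}}=O\bigl((3/4)^{d/2}\bigr)$ once $d>[k^2/4]+2$, and letting $d\to\infty$ finishes the proof.

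The assembly for Theorem~\ref{thmexpniconv} is thus short; the substance lies in its inputs. The step I expect to be most delicate is the bookkeeping behind Lemma~\ref{thmremainderofaffine}, namely verifying that $\mathfrak{R}_1^{k}$ and $\mathfrak{R}_2^{k}$ together exhaust exactly the non-main terms of \eqref{eq 1: higher moment formula: affine} and inject, without overlap or omission, into the three index ranges of Lemma~\ref{lemestimatesfromrogers}. Within the present theorem the only thing to track is that, $k$ being fixed, the dimension hypotheses $d\ge k$ and $d>[k^2/4]+2$ hold for all sufficiently large $d$, which is automatic.
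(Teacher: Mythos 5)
Your proposal is correct and follows exactly the paper's route: the theorem is obtained by combining the decomposition $\mathbb{E}\bigl(\prod_{i=1}^{k}N_i\bigr)=M_{d,k}^{\text{affine}}+R_{d,k}^{\text{affine}}$ with Lemma~\ref{thmsodmainterm} for the ($d$-independent) main term and Lemma~\ref{thmremainderofaffine} for the vanishing remainder. The extra observations you record (that $M_{\alpha,\beta}^{\text{affine}}$ does not depend on $d$, and that the dimension hypotheses hold for large $d$) are sensible bookkeeping the paper leaves implicit.
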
% F}O}L}D

% Subsubsection
\subsubsection{Proof of Theorem \ref{thmppmainaffine}}%\label{}% F{O{L{D
This proof closely follows the proof of Theorem 1 in \cite[\S4]{Sod}.
Let us discuss the Poisson process $\{N^\lambda(t): t \geq 0\}$.
% Our first objective is to show that the expectation of the Poisson process is the RHS of \eqref{eqexpniconv} for $\lambda = 1$.
By definition $N^\lambda(t)$ denotes the number of points falling in the interval $[0, t]$ and $N^\lambda(t)$ is Poisson distributed with expectation $\lambda t$.
By $0 \leq T_1 \leq  T_2 \leq T_3 \leq \cdots$ let us denote the points of the Poisson process.

% Lemma
\begin{lemma}[]\label{lemexppp}% F{O{L{D
    Let $k \geq 1$ and let $\mathscr{P}(k)$ denote the set of partitions of $\{1, \ldots, k\}$.
    For $1 \leq i \leq k$ let $f_i : \mathbb{R}_{\geq 0} \to \mathbb{R}$ be functions satisfying $\prod_{i \in B} f_i \in L^1(\mathbb{R}_{\geq 0})$ for every nonempty subset $B \subseteq \{1, \ldots, k\}$.
    Then
    % Equation
    \begin{equation}\label{eqexppp}
        \mathbb{E}{\left(\prod_{i = 1}^{k} {\left(\sum_{\ell = 1}^{\infty} f_i(T_\ell)\right)}\right)} = \sum_{P \in \mathscr{P}(k)} \lambda^{\#P} {\left(\int_{0}^{\infty} \prod_{i \in B} f_i(x) \d{x}\right)}.
    \end{equation}
\end{lemma}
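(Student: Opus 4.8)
The plan is to compute the left side of \eqref{eqexppp} by expanding the product and using the factorial-moment description of a Poisson process. Write $\prod_{i=1}^k \bigl(\sum_{\ell=1}^\infty f_i(T_\ell)\bigr) = \sum f_1(T_{\ell_1}) \cdots f_k(T_{\ell_k})$, where the sum ranges over all $k$-tuples $(\ell_1, \ldots, \ell_k) \in \NN^k$. Group the tuples according to which indices $i, i'$ satisfy $\ell_i = \ell_{i'}$; this partitions the index set $\{1, \ldots, k\}$ into blocks, so the sum becomes $\sum_{P \in \mathscr P(k)} \sum^* \prod_{B \in P} \bigl(\prod_{i \in B} f_i(T_{\ell_B})\bigr)$, where the inner starred sum is over assignments of \emph{distinct} labels $\ell_B$ to the blocks $B$ of $P$. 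Taking expectations and using Tonelli (justified by the $L^1$ hypothesis, which guarantees absolute integrability once we pass to the intensity measure), it remains to evaluate, for each partition $P$, the expectation of $\sum^* \prod_{B} g_B(T_{\ell_B})$ with $g_B := \prod_{i \in B} f_i$.

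First I would recall the standard fact about the factorial moment measures of a Poisson process: if $\{N^\lambda(t)\}$ has points $0 \le T_1 \le T_2 \le \cdots$ and $g_1, \ldots, g_n$ are nonnegative (or $L^1$) functions on $\RR_{\ge 0}$, then
\[
\EE\Biggl(\sum_{\substack{\ell_1, \ldots, \ell_n \ge 1 \\ \text{distinct}}} \prod_{j=1}^n g_j(T_{\ell_j})\Biggr) = \prod_{j=1}^n \Biggl(\lambda \int_0^\infty g_j(x)\,\d x\Biggr).
\]
This is the multivariate Campbell–Mecke / factorial moment identity for a Poisson point process with intensity $\lambda\,\d x$ on $\RR_{\ge 0}$; it can be proved by a routine induction on $n$ using the defining property that the number of points in disjoint intervals are independent Poisson variables, or simply cited from standard point-process references. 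Applying this identity with $n = \#P$ and $\{g_j\} = \{g_B : B \in P\}$ gives
\[
\EE\Biggl(\sum{}^* \prod_{B \in P} g_B(T_{\ell_B})\Biggr) = \lambda^{\#P} \prod_{B \in P} \int_0^\infty g_B(x)\,\d x = \lambda^{\#P} \prod_{B \in P} \int_0^\infty \prod_{i \in B} f_i(x)\,\d x.
\]
Summing over $P \in \mathscr P(k)$ then yields \eqref{eqexppp}.

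The main obstacle — really the only nontrivial point — is the interchange of expectation and the infinite sums, i.e.\ making the Tonelli/Fubini step rigorous. Since the $f_i$ need not be nonnegative, one first runs the entire argument with $|f_i|$ in place of $f_i$: the $L^1$ hypothesis $\prod_{i \in B}|f_i| \in L^1$ for every nonempty $B$ is exactly what makes each term $\lambda^{\#P}\prod_{B}\int_0^\infty \prod_{i \in B}|f_i|$ finite, hence the doubly-infinite sum of absolute values has finite expectation, and then the rearrangement and interchange for the signed sums are justified by dominated convergence. A secondary bookkeeping point is the diagonal combinatorics — verifying that grouping tuples $(\ell_1,\ldots,\ell_k)$ by their coincidence pattern is precisely indexed by $\mathscr P(k)$, and that within a fixed pattern the surviving sum is over injective labelings of the blocks — but this is elementary. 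Once these are in place the computation is exactly as above.
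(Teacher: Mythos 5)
Your proposal is correct and follows essentially the same route as the paper, which simply defers to S\"odergren's Proposition 3: expand the product over tuples, group by coincidence pattern (indexed by partitions of $\{1,\dots,k\}$), and apply the factorial-moment/Campbell--Mecke identity for the Poisson process, with the $L^1$ hypothesis justifying the interchange of expectation and summation. (You also implicitly supply the product $\prod_{B\in P}$ over blocks that is missing from the right-hand side of \eqref{eqexppp} as printed, which is the intended statement.)
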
% F}O}L}D
% Proof
\begin{proof}% F{O{L{D
    The proof of this lemma is similar to \cite[Proposition 3]{Sod}.
\end{proof}% F}O}L}D

We apply Lemma \ref{lemexppp} with functions $f_i = \mychi_i, 1 \leq i \leq k$, where $\mychi_i$ is the characteristic function of the interval $[0, V_i]$.
Thus we get
% Equation
\begin{equation}\label{eqexpofpp}
    \begin{split}
        \mathbb{E}{\left(\prod_{i = 1}^{k} N^\lambda(V_i)\right)} &= \mathbb{E}{\left(\prod_{i = 1}^{k} {\left(\sum_{\ell = 1}^{\infty} \mychi_i(T_\ell)\right)}\right)} \\
                                                                   &= \sum_{P \in \mathscr{P}(k)} \lambda^{\#P} \prod_{B \in P} {\left(\int_{0}^{\infty} \prod_{i \in B} \mychi_i(x) \d{x}\right)} \\
                                                                   &= \sum_{P \in \mathscr{P}(k)} \lambda^{\#P} \prod_{B \in P} V_{i_B},
    \end{split}
\end{equation}
where $i_B = \min_{i \in B} i$.

% Note that $\mathfrak{M}^k = \bigsqcup_{(\alpha, \beta)} \mathfrak{M}_{\alpha, \beta}^\text{affine}$, where $(\alpha, \beta)$ runs over all divisions of $\{1, \ldots, k\}$.
The following lemma helps us compare the RHS's of \eqref{eqexpniconv} and \eqref{eqexpofpp} for $\lambda = 1$.
% Lemma
\begin{lemma}[\cite{Sod}, Lemma 3]\label{lemsodlem3}% F{O{L{D
    There is bijection $g : \mathfrak{M}^k \to \mathscr{P}(k)$ with the property that if $\widetilde{D} \in \mathfrak{M}^k$ is an $k \times m$ matrix and $g(\widetilde{D}) = P = \{B_1, \ldots, B_{\#P}\}$ then $\#P = m$ and $\{\alpha_1 < \cdots < \alpha_m\} = \{i_{B_1} < \cdots < i_{B_m}\}$.
\end{lemma}% F}O}L}D
% Proof
\begin{proof}% F{O{L{D
    Other than switching the rows and columns of the matrices $D$, the proof of this lemma is same as \cite[Lemma 3]{Sod}.
\end{proof}% F}O}L}D

Theorem \ref{thmexpniconv}, \eqref{eqexpofpp} and Lemma \ref{lemsodlem3} imply the following result:
% StatementofTheorem
\begin{theorem}[]\label{thmexpniconvi}% F{O{L{D
    \[
        \mathbb{E}{\left(\prod_{i = 1}^{k} N_i\right)} \to \mathbb{E}{\left(\prod_{i = 1}^{k} N^1(V_i)\right)}
    \]
    as $d \to \infty$.
\end{theorem}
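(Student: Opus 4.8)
The plan is to read the statement off the two limiting expressions already in hand: the one for $\mathbb{E}(\prod_{i=1}^k N_i)$ furnished by Theorem~\ref{thmexpniconv} and the one for $\mathbb{E}(\prod_{i=1}^k N^1(V_i))$ furnished by \eqref{eqexpofpp}, matched term by term through the bijection of Lemma~\ref{lemsodlem3}. No further analytic input is required; the whole argument is the combinatorial bookkeeping packaged in those cited results.

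First I would rewrite the right-hand side of \eqref{eqexpniconv}. Since $M_{\alpha, \beta}^{\text{affine}} = \#\{\widetilde{D'} \in \mathfrak{M}^k : I_{\widetilde{D'}} = \alpha\}$ and, for such a $\widetilde{D'}$ (a $k \times m$ matrix), the set $\alpha = \{\alpha_1 < \cdots < \alpha_m\}$ is precisely $I_{\widetilde{D'}}$ written in increasing order, we have $\prod_{i=1}^m V_{\alpha_i} = \prod_{i \in I_{\widetilde{D'}}} V_i$. Summing over all divisions $(\alpha, \beta)$ then collapses to a single sum over $\mathfrak{M}^k$:
\[
\sum_{(\alpha, \beta)} M_{\alpha, \beta}^{\text{affine}} \prod_{i = 1}^{m} V_{\alpha_i} = \sum_{\widetilde{D'} \in \mathfrak{M}^k} \prod_{i \in I_{\widetilde{D'}}} V_i .
\]
On the Poisson side, \eqref{eqexpofpp} with $\lambda = 1$ reads $\mathbb{E}\big(\prod_{i=1}^k N^1(V_i)\big) = \sum_{P \in \mathscr{P}(k)} \prod_{B \in P} V_{i_B}$, where $i_B = \min_{i \in B} i$.

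Then I would invoke the bijection $g : \mathfrak{M}^k \to \mathscr{P}(k)$ of Lemma~\ref{lemsodlem3}. If $g(\widetilde{D'}) = P = \{B_1, \ldots, B_{\#P}\}$ with $\widetilde{D'}$ of size $k \times m$, the lemma asserts $\#P = m$ and $I_{\widetilde{D'}} = \{i_{B_1} < \cdots < i_{B_m}\}$; comparing these two increasing lists entry by entry gives $\prod_{i \in I_{\widetilde{D'}}} V_i = \prod_{B \in P} V_{i_B}$. As $g$ is a bijection, summing this identity over $\widetilde{D'} \in \mathfrak{M}^k$ yields
\[
\sum_{\widetilde{D'} \in \mathfrak{M}^k} \prod_{i \in I_{\widetilde{D'}}} V_i = \sum_{P \in \mathscr{P}(k)} \prod_{B \in P} V_{i_B} = \mathbb{E}\Big(\prod_{i = 1}^{k} N^1(V_i)\Big).
\]
Chaining this with Theorem~\ref{thmexpniconv} and the first display gives exactly the asserted convergence.

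The only step calling for any care --- and thus the "main obstacle", albeit a mild one --- is verifying that the index-set equality from Lemma~\ref{lemsodlem3} genuinely translates into equality of the numerical products $\prod V_{\alpha_i}$; this is immediate once one observes that the $V_i$ carry a fixed (increasing) indexing and that the lemma matches the increasing lists $\{\alpha_1 < \cdots < \alpha_m\}$ and $\{i_{B_1} < \cdots < i_{B_m}\}$ term by term. Everything else is a direct concatenation of Theorem~\ref{thmexpniconv}, \eqref{eqexpofpp}, and Lemma~\ref{lemsodlem3}.
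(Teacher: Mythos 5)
Your proposal is correct and takes essentially the same route as the paper, which gives no separate proof but simply asserts that Theorem~\ref{thmexpniconv}, \eqref{eqexpofpp} and Lemma~\ref{lemsodlem3} combine to yield the result. You have merely made explicit the intended bookkeeping: collapsing the sum over divisions $(\alpha,\beta)$ into a single sum over $\mathfrak{M}^k$ and matching it term by term with the partition sum in \eqref{eqexpofpp} at $\lambda=1$ via the bijection $g$.
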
% F}O}L}D
% Corollary
\begin{corollary}[]\label{corconvindist}% F{O{L{D
    Let $\bm{V} = (V_1, \ldots, V_k)$ and consider the random vectors
    \[
        \bm{N}(\Lambda, \bm{V}) = {\left(N_1(\Lambda), \ldots, N_k(\Lambda)\right)}
    \]
    and
    \[
        \bm{N}(\bm{V}) = {\left(N^1(V_1), \ldots, N^1(V_k)\right)}.
    \]
    Then $\bm{N}(\Lambda, \bm{V})$ converges in distribution to $\bm{N}(\bm{V})$ as $d \to \infty$.
\end{corollary}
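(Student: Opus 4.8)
The plan is to deduce the corollary from the joint-moment convergence of Theorem~\ref{thmexpniconvi} by the multivariate method of moments. By the multivariate Fr\'echet--Shohat theorem, if $\bm{X}$ is an $\RR^k$-valued random vector whose distribution is determined by its moments, then any sequence of $\RR^k$-valued random vectors whose mixed moments all converge to the corresponding mixed moments of $\bm{X}$ converges to $\bm{X}$ in distribution. Thus it suffices to establish: (a) the limit vector $\bm{N}(\bm{V}) = (N^1(V_1), \ldots, N^1(V_k))$ is moment-determinate; and (b) for every multi-index $(a_1, \ldots, a_k) \in \ZZ_{\ge 0}^k$,
\[
\mathbb{E}\!\left(\prod_{i=1}^k N_i(\Lambda)^{a_i}\right) \longrightarrow \mathbb{E}\!\left(\prod_{i=1}^k N^1(V_i)^{a_i}\right) \qquad \text{as } d \to \infty .
\]

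For (b), after relabelling so that $V_1 \le \cdots \le V_k$ (a permutation of coordinates changes neither side), set $K = a_1 + \cdots + a_k$ and let $\bm{W} = (W_1 \le \cdots \le W_K)$ be the non-decreasing list in which the value $V_i$ occurs with multiplicity $a_i$. The entire chain of results leading to Theorem~\ref{thmexpniconvi} --- the higher moment formula of Theorem~\ref{higher moment formula: affine}, Lemma~\ref{thmsodmainterm}, Lemma~\ref{thmremainderofaffine} and Lemma~\ref{lemsodlem3} --- only uses the hypothesis $0 \le V_1 \le \cdots \le V_k$ and in particular allows equalities among the radii. Hence Theorem~\ref{thmexpniconvi} applies verbatim with $k$ replaced by $K$ and the radii $\bm{W}$, giving $\mathbb{E}(\prod_{j=1}^K N_{W_j}(\Lambda)) \to \mathbb{E}(\prod_{j=1}^K N^1(W_j))$. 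Regrouping the equal radii, the left-hand side is $\mathbb{E}(\prod_i N_i(\Lambda)^{a_i})$ and the right-hand side is $\mathbb{E}(\prod_i N^1(V_i)^{a_i})$, which is (b).

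For (a), each coordinate $N^1(V_i)$ is a Poisson random variable with mean $V_i$, and Poisson laws are moment-determinate: their $n$-th moments are the Touchard polynomials in $V_i$ and grow slowly enough to satisfy Carleman's condition. By Petersen's theorem on the multivariate moment problem, an $\RR^k$-valued random vector all of whose one-dimensional coordinate marginals are moment-determinate is itself determined by its moments; applying this to $\bm{N}(\bm{V})$ gives (a). (Tightness of $\{\bm{N}(\Lambda, \bm{V})\}_d$, together with the uniform integrability needed to run the Fr\'echet--Shohat argument, follows from the boundedness of the moments implied by (b).) The only point requiring a little care is to confirm that repetitions among the radii are admissible throughout Section~\ref{Poisson}; otherwise the argument is routine and follows the pattern of \cite[\S4]{Sod}, the substantive input being Theorem~\ref{thmexpniconvi}.
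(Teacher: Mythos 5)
Your proposal is correct and is essentially the paper's own argument: the paper defers to S\"odergren's Corollary 1, whose proof is exactly the multivariate method of moments you describe --- applying the joint moment convergence (Theorem~\ref{thmexpniconvi}) with the radii repeated according to the multi-index, and invoking moment-determinacy of the Poisson limit via Carleman/Petersen together with Fr\'echet--Shohat. Your explicit check that equalities among the $V_i$ are permitted throughout Section~\ref{Poisson} is the right point to verify, and it holds.
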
% F}O}L}D
% Proof
\begin{proof}% F{O{L{D
    This proof follows a similar line of argument as \cite[Corollary 1]{Sod}.
    We omit it for the sake of brevity.
\end{proof}% F}O}L}D

Corollary \ref{corconvindist} implies that all finite dimensional distributions coming from the process $\{N_t(\Lambda) : t \geq 0\}$ converge to the corresponding finite dimensional distributions of the Poisson process $\{N^1(t) : t \geq 0\}$ as $d \to \infty$.
By \cite[Theorem 12.6 and Theorem 16.7]{Bil}, we see that the process $\{N_t(\Lambda) : t \geq 0\}$ converges weakly to the process $\{N^1(t) : t \geq 0\}$ as $d \to \infty$. \\

% F}O}L}D

Corollary \ref{corconvindist}, with $k = 1$, is a generalization of \cite[Theorem 3]{Rogers56-2} to the affine case.

% F}O}L}D
% Subsection
\subsection{Congruence Case}\label{subsection Congruence case}% F{O{L{D
In this section, we prove Theorem \ref{thmppmaincong}. We recall the notation. For $d \geq 2$ let $\mathcal{S} = \{S_t: t > 0\}$, an increasing family of subsets of $\mathbb{R}^{d}$ and $\bm{p}/q \in \mathbb{Q}^{d}$.
For $\Lambda \in Y_{\bm{p}/q}$ set
\[
    N_t(\Lambda) = \#(S_t \cap \Lambda).
\]

% StatementofTheorem
%\begin{theorem}[]\label{thmppmaincong}% F{O{L{D
    % Enumerate
    % \renewcommand{\labelenumi}{\emph{\Roman{enumi}.}}
   % \begin{enumerate}[(i)]
       % \item For $q \geq 3$, the stochastic process $\{N_t(\Lambda) : t > 0\}$ converges weakly to $\{N^1(t) : t > 0\}$.
        %\item For $q = 2$, assume that $S_t$'s are symmetric about origin, and let $\widetilde{N}_t = \frac{1}{2} N_t$.
            %Then the stochastic process $\left\{\widetilde{N}_t(\Lambda) : t > 0\right\}$ converges weakly to $\{N^{1/2}(t) : t > 0\}$.
    %\end{enumerate}
    
%\end{theorem}% F}O}L}D

For $\Lambda \in Y_{\bm{p}/q}$ let us order the lengths of non-zero vectors in $\Lambda$ as $0 < \ell_1 \leq \ell_2 \leq \ell_3 \leq \cdots $, and let $\mathscr{V}_i$ denote the volume of the closed ball of radius $\ell_i$ centered at origin.
Taking $\mathcal{S} = \{B_t: t > 0\}$ to be the increasing family of closed balls with $\vol(B_t) = t$ around origin we see that
\[
    N_t(\Lambda) = \#\{i : \mathscr{V}_i \leq t\}.
\]
Thus Theorem \ref{thmppmaincong} is equivalent to:
% StatementofTheorem
\begin{theorem}[]\label{thmppmaincong2}% F{O{L{D
    For and any fixed $n$, the $n$-dimensional random variable $(\mathscr{V}_1, \ldots, \mathscr{V}_n)$ converges in distribution to the distribution of the first $n$ points of a Poisson process on the non-negative real line with intensity
    \[
        \begin{dcases}
            1 & \text{if} \ q \geq 3, \\
            \tfrac{1}{2} & \text{if} \ q = 2. \\
        \end{dcases}
    \]
\end{theorem}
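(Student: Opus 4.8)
The plan is to run the argument of the affine case (Theorem~\ref{thmppmainaffine}) with the congruence higher moment formula~\eqref{higher moment formula: eq (1)} in place of Theorem~\ref{higher moment formula: affine}. As there, it is equivalent and enough to prove, for each fixed $k\ge 1$ and $V_1\le\cdots\le V_k$, convergence of the joint moments $\mathbb{E}\big(\prod_{i=1}^kN_i\big)$ as $d\to\infty$, where $N_i:=N_{V_i}(\cdot)=\widehat{\rho_i}(\cdot)$ with $\rho_i$ the indicator function of $S_{V_i}$, computed on $(Y_{\vp/q},\mu_q)$. Since a Poisson law is moment-determinate, convergence of all such joint moments forces convergence of every finite-dimensional distribution of $\{N_t(\Lambda):t>0\}$ (respectively $\{\widetilde N_t(\Lambda):t>0\}$ when $q=2$) to that of $\{N^1(t)\}$ (respectively $\{N^{1/2}(t)\}$), arguing as for Corollary~\ref{corconvindist}, and weak convergence of the processes then follows from \cite[Theorems~12.6 and 16.7]{Bil}. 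This yields Theorem~\ref{thmppmaincong}, which is equivalent to Theorem~\ref{thmppmaincong2}.

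Applying~\eqref{higher moment formula: eq (1)} to $F=\prod_{i=1}^k\rho_i$ and writing $\mathbb{E}\big(\prod_iN_i\big)=M^{\mathrm{cong}}_{d,k}+R^{\mathrm{cong}}_{d,k}$, I would let $M^{\mathrm{cong}}_{d,k}$ gather the leading term $\int_{(\RR^d)^k}F=\prod_{i=1}^kV_i$ together with the contributions of all $D\in\mathfrak C^k_{m,1}$, $1\le m\le k-1$, satisfying the hypotheses of Corollary~\ref{Lemma for application} --- first row $(1,0,\dots,0)$, and a single non-zero entry per row, equal to $1$ if $q\ge3$ and to $\pm1$ if $q=2$ --- for which that corollary gives $N'(D,1)=1$; denote by $\mathfrak M^k$ the set of these matrices, with $\mathrm{Id}_k$ standing for the leading term (so that, for instance, the all-ones column $\tp{(1,\dots,1)}\in\mathfrak C^k_{1,1}$, the congruence substitute for the affine ``diagonal'' term, lies in $\mathfrak M^k$). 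Then $R^{\mathrm{cong}}_{d,k}$ consists of all $u\ge2$ terms, all $u=1$ terms with some entry of modulus $\ge2$, and the $u=1$ terms whose entries lie in $\{0,\pm1\}$ but which have a row with at least two non-zero entries. The weights cause no trouble: if $\widetilde{D'}\in\mathfrak C^k_{m,u_0}$ comes from $D\in\mathfrak D^{k-1}_{r,u}$ and $t$ with $\lcm(t,u)=u_0$, then, using $N_D(\vl,t)\le1$, the bound $N(D,u)^d/u^{dr}\le u^{-d}$ recalled in the remark after Lemma~\ref{lemestimatesfromrogers}, and $tu\ge u_0$,
\[
N'(\widetilde{D'},u_0)=\frac{N_D(\vl,t)}{t^d}\cdot\frac{N(D,u)^d}{u^{dr}}\le\frac{1}{(tu)^d}\le\frac{1}{u_0^d},
\]
while the remaining $u=1$ weights are $\le1$; so for $d$ large Lemma~\ref{lemestimatesfromrogers} and the remark after it (valid for $Y_{\vp/q}$) give $R^{\mathrm{cong}}_{d,k}=O\big((3/4)^{d/2}\big)$.

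It remains to compute the limit of $M^{\mathrm{cong}}_{d,k}$. For $D\in\mathfrak M^k$ a $k\times m$ matrix, partition its rows into blocks $B_1,\dots,B_m$ according to which column carries their non-zero entry; as the $V_i$ increase and, for $q=2$, each $S_d$ is symmetric so $\rho_i(-\vy)=\rho_i(\vy)$, the associated summand of~\eqref{higher moment formula: eq (1)} equals $\prod_{j=1}^m V_{i_{B_j}}$, where $i_{B_j}=\min B_j$. When $q\ge3$ the $+1$ pattern is forced, so $D\mapsto\{B_1,\dots,B_m\}$ is a bijection of $\mathfrak M^k$ onto the set $\mathscr P(k)$ of partitions of $\{1,\dots,k\}$ (the congruence analogue of Lemma~\ref{lemsodlem3}), whence $M^{\mathrm{cong}}_{d,k}=\sum_{P\in\mathscr P(k)}\prod_{B\in P}V_{i_B}=\mathbb{E}\big(\prod_iN^1(V_i)\big)$ by~\eqref{eqexpofpp}, and the theorem follows for $q\ge3$. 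When $q=2$ the pivot (i.e.\ minimal) row of each block is normalised to have entry $+1$ while the other rows of that block carry free signs, so each partition $P=\{B_1,\dots,B_m\}$ is the image of exactly $\prod_{j=1}^m2^{|B_j|-1}=2^{k-\#P}$ matrices of $\mathfrak M^k$; hence $M^{\mathrm{cong}}_{d,k}=\sum_{P\in\mathscr P(k)}2^{k-\#P}\prod_{B\in P}V_{i_B}$ and
\[
\mathbb{E}\Big(\prod_{i=1}^k\widetilde N_i\Big)=2^{-k}\big(M^{\mathrm{cong}}_{d,k}+R^{\mathrm{cong}}_{d,k}\big)\ \longrightarrow\ \sum_{P\in\mathscr P(k)}2^{-\#P}\prod_{B\in P}V_{i_B}=\mathbb{E}\Big(\prod_{i=1}^kN^{1/2}(V_i)\Big)
\]
by~\eqref{eqexpofpp} with $\lambda=\tfrac12$. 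The main obstacle I expect is the combinatorial bookkeeping around $\mathfrak M^k$: verifying via Corollary~\ref{Lemma for application} and Convention~\ref{Convention} that $N'(D,1)=1$ holds exactly on $\mathfrak M^k$, that distinct data produce distinct $\widetilde{D'}$ so that nothing is double-counted (cf.\ the remark in Notation~\ref{widetilde D': cong}), and proving the congruence version of the bijection in Lemma~\ref{lemsodlem3} together with the sign count in the case $q=2$; once this is settled, the analytic content is exactly Lemma~\ref{lemestimatesfromrogers}.
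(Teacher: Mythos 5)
Your proposal is correct and follows essentially the same route as the paper: the same decomposition $\mathbb{E}\big(\prod_i N_i\big)=M^{\mathrm{cong}}_{d,k}+R^{\mathrm{cong}}_{d,k}$ with $\mathfrak M^k$ singled out via Corollary~\ref{Lemma for application}, the same use of Lemma~\ref{lemestimatesfromrogers} (and the bound $N'(\widetilde{D'},u_0)\le u_0^{-d}$) for the remainder, the same symmetry hypothesis and $2^{k-\#\alpha}$ sign count with the $\widetilde N_i=\tfrac12 N_i$ normalisation for $q=2$, and the same comparison with Poisson moments via the partition bijection and~\eqref{eqexpofpp}. The combinatorial checks you flag as the main obstacle are exactly what the paper carries out in Lemmas~\ref{thmsodmaintermcong} and~\ref{lemsodlem3i}.
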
% F}O}L}D

As in the affine case we approach Theorem \ref{thmppmaincong} via a joint moment formula for $N_t( \cdot )$.
Let $k \geq 1$ and $0 < V_1 \leq V_2 \leq \cdots \leq V_k$.
Define $N_i$'s, $\rho_i$'s and $F$ similar to the affine case.
We apply Theorem \ref{higher moment formula} to the function $F$.
We first consider the sub-collection of the RHS of \eqref{higher moment formula: eq (1)} denoted by $M_{d, k}^{\text{cong}}$, defined as
% Equation
\begin{equation}\label{eqmdkcong}
    M_{d, k}^{\text{cong}} := \sum_{\widetilde{D} \in \mathfrak{M}^k} \int_{(\mathbb{R}^{d})^m} \prod_{i = 1}^{k} \rho_i {\left(\widetilde{D} \begin{pmatrix}
                \bm{y}_1 \\
                \vdots \\
                \bm{y}_m
    \end{pmatrix}\right)} \d{\bm{y}_1} \cdots \d{\bm{y}_m}
\end{equation}
where
% UnnumberedAlign
\begin{align*}
    \mathfrak{R}_1^k &= \bigcup_{1 \leq m \leq k-1} {\left({\left(\bigcup_{u \geq 2} \mathfrak{C}_{m, u}^k\right)} \cup {\left\{\widetilde{D} = {\left(\widetilde{D}_{ij}\right)} \in \mathfrak{C}_{m, 1}^k : \exists |\widetilde{D}_{ij}| \geq 2\right\}}\right)}, \\
    % \cup {\left( \bigcup_{t \geq 2} \mathfrak C^k_{1,t}\right)}, \\
                     % & \hspace{3in} \cup \left\{D \in \mathfrak{D}_{1, 1}^{k, *} : \exists |a_{i'}| \geq 2\right\}, \\
    \mathfrak{R}_{2}^k &= {\left\{\widetilde{D} \in {\left(\bigcup_{1 \leq m \leq k-1} \mathfrak{C}_{m, 1}^k\right)} \smallsetminus \mathfrak{R}_1^k: \substack{\displaystyle\exists~\text{row such that at least} \\ \\ \displaystyle\text{two entries are non-zero}}\right\}}, \\
    \mathfrak{M}^k &= {\left(\bigcup_{1 \leq m \leq k-1} \mathfrak{C}_{m, 1}^k \smallsetminus {\left(\mathfrak{R}_1^k \cup \mathfrak{R}_{2}^k\right)}\right)} \cup \left\{\mathrm{Id}_k\right\}.
\end{align*}
The rest of the terms in \eqref{higher moment formula: eq (1)} will be denoted as $R_{d, k}^{\text{cong}}$, i.e.,
\[
    \mathbb{E}{\left(\prod_{i = 1}^{k}N_i\right)} = M_{d, k}^{\text{cong}} + R_{d, k}^{\text{cong}}.
\]

Define $\mathfrak{M}_{\alpha, \beta}^{\text{cong}}$, for $(\alpha, \beta)$ a division of $\{1, \ldots, k\}$, similar to the affine case and let $M_{\alpha, \beta}^{\text{cong}}$ denote the cardinality of $\mathfrak{M}_{\alpha, \beta}^{\text{cong}}$.
We can re-write \eqref{eqmdkcong} as:
% Equation
\begin{equation}\label{eqmdkcongnew}
    M_{d, k}^{\text{cong}} = \sum_{(\alpha, \beta)} \sum_{\widetilde{D} \in \mathfrak{M}_{\alpha, \beta}^{\text{cong}}} \int_{(\mathbb{R}^{d})^m} \prod_{i = 1}^{k} \rho_i {\left(\widetilde{D} \begin{pmatrix}
                \bm{y}_1 \\
                \vdots \\
                \bm{y}_m
    \end{pmatrix}\right)} \d{\bm{y}_1} \cdots \d{\bm{y}_m},
\end{equation}
where the outer sum is over all possible divisions $(\alpha, \beta)$ of $\{1, \ldots, k\}$.

% Remark
\begin{remark}\label{rementryonly1cong}% F{O{L{D
    For $q \geq 3$, it follows from the definition of $\widetilde{D}$ and similar arguments as in Remark \ref{rementryonly1} that for $\widetilde{D} \in \mathfrak{M}^k$ that the non-zero entries of $\widetilde{D}$ can only be 1. 
    But for $q = 2$ the non-zero entries can be $\pm 1$.
    In particular, this is the reason why we need the condition that $S_d$ is symmetric for the case when $q=2$ (see the second last equality in \eqref{eq: reason of symmetry} below).
\end{remark}% F}O}L}D

% Lemma
\begin{lemma}[]\label{thmsodmaintermcong}% F{O{L{D
    For $q \geq 3$ and for $q = 2$ with $S_t$ being symmetric around the origin, we have
    % Equation
    \begin{equation}\label{eqsodmaintermcong}
        M_{d, k}^{\text{\emph{cong}}} = \sum_{(\alpha, \beta)} M_{\alpha, \beta}^{\text{\emph{cong}}} \prod_{i = 1}^{m} V_{\alpha_i}.
    \end{equation}
\end{lemma}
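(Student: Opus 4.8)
The plan is to follow the proof of Lemma~\ref{thmsodmainterm} almost verbatim; the only genuinely new point is the treatment of the $\pm1$ entries that can occur when $q=2$, and this is exactly where the symmetry hypothesis on the $S_d$ is used.

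First I would record the structure of a matrix $\widetilde{D'}\in\mathfrak M_{\alpha,\beta}^{\text{cong}}$. By construction $\mathfrak M^k\subseteq(\bigcup_{1\le m\le k-1}\mathfrak C^k_{m,1})\cup\{\mathrm{Id}_k\}$, and $\mathfrak C^k_{m,1}\subseteq\mathfrak D^k_{m,1}$. Since $\widetilde{D'}\notin\mathfrak R_1^k$, every entry of $\widetilde{D'}$ lies in $\{0,\pm1\}$, and since $\widetilde{D'}\notin\mathfrak R_2^k$, each row of $\widetilde{D'}$ has exactly one nonzero entry. The admissibility condition (i) of Notation~\ref{notation} with $u=1$ forces the pivot rows — which by definition of $I_{\widetilde{D'}}=\alpha$ are $\alpha_1<\dots<\alpha_m$ — to carry a $+1$ in column $i$ for the row $\alpha_i$, while condition (ii) says that in each column $j$ all entries in rows above $\alpha_j$ vanish. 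Hence each non-pivot row $\beta_\ell$ ($1\le\ell\le k-m$) has its unique nonzero entry, say $\varepsilon_\ell\in\{\pm1\}$, in some column $\lambda_\ell$ with $\beta_\ell>\alpha_{\lambda_\ell}$. By Remark~\ref{rementryonly1cong}, $\varepsilon_\ell=1$ whenever $q\ge3$.

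Next I would compute, exactly as in Lemma~\ref{thmsodmainterm}, for such a $\widetilde{D'}$,
\[
\int_{(\RR^d)^m}\prod_{i=1}^k\rho_i\Big(\sum_{j=1}^m \widetilde{d'}_{ij}\,\vy_j\Big)\,\d\vy_1\cdots\d\vy_m
= \int_{(\RR^d)^m}\prod_{i=1}^m\rho_{\alpha_i}(\vy_i)\;\prod_{\ell=1}^{k-m}\rho_{\beta_\ell}(\varepsilon_\ell\,\vy_{\lambda_\ell})\,\d\vy_1\cdots\d\vy_m.
\]
When $q=2$ I would invoke the hypothesis that each $S_d$, hence each $\rho_i$, is symmetric about the origin, so that $\rho_{\beta_\ell}(\varepsilon_\ell\,\vy_{\lambda_\ell})=\rho_{\beta_\ell}(\vy_{\lambda_\ell})$; for $q\ge3$ this is automatic since $\varepsilon_\ell=1$. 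Because $\{S_t\}$ is increasing and $\beta_\ell>\alpha_{\lambda_\ell}$ gives $V_{\beta_\ell}\ge V_{\alpha_{\lambda_\ell}}$, we have $S_{V_{\alpha_{\lambda_\ell}}}\subseteq S_{V_{\beta_\ell}}$ and hence $\rho_{\alpha_{\lambda_\ell}}\cdot\rho_{\beta_\ell}=\rho_{\alpha_{\lambda_\ell}}$; thus every factor coming from a non-pivot row is absorbed (this remains valid if several non-pivot rows share a column), and the integral collapses to $\prod_{i=1}^m\int_{\RR^d}\rho_{\alpha_i}(\vy_i)\,\d\vy_i=\prod_{i=1}^m V_{\alpha_i}$.

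Finally I would sum: summing over $\widetilde{D'}\in\mathfrak M_{\alpha,\beta}^{\text{cong}}$ multiplies this by $M_{\alpha,\beta}^{\text{cong}}$, and summing over all divisions $(\alpha,\beta)$ of $\{1,\dots,k\}$ — including $(\{1,\dots,k\},\varnothing)$, for which $\widetilde{D'}=\mathrm{Id}_k$ and the integral is $\prod_{i=1}^k V_i$ — yields \eqref{eqsodmaintermcong} via \eqref{eqmdkcongnew}. The only real obstacle is the $q=2$ bookkeeping: one must verify that admissibility together with $\widetilde{D'}\notin\mathfrak R_1^k\cup\mathfrak R_2^k$ really does confine all sign ambiguity to a single $\pm1$ per non-pivot row, so that evenness of the $\rho_i$ suffices to reduce to the affine computation; once that is in hand the argument is identical to the proof of Lemma~\ref{thmsodmainterm}.
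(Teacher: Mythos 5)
Your proposal is correct and follows essentially the same route as the paper's own proof: reduce to the computation of Lemma~\ref{thmsodmainterm}, observe that each non-pivot row of $\widetilde{D'}\in\mathfrak M^k$ carries a single nonzero entry $\pm1$ (with $-1$ possible only for $q=2$), use the evenness of the $\rho_i$ supplied by the symmetry of the $S_t$ to remove the sign, and then absorb each factor $\rho_{\beta_\ell}(\vy_{\lambda_\ell})$ into $\rho_{\alpha_{\lambda_\ell}}(\vy_{\lambda_\ell})$ via the monotonicity of the $V_i$. Your extra bookkeeping (admissibility forcing $\beta_\ell>\alpha_{\lambda_\ell}$, and the remark that shared columns cause no problem) only makes explicit what the paper leaves implicit.
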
% F}O}L}D
% Proof
\begin{proof}% F{O{L{D
    For $q \geq 3$ the proof of this lemma is identical to that of Lemma \ref{thmsodmainterm}.
    Hence we only focus on the case when $q = 2$.

    Consider any matrix $\widetilde{D} = \left(\widetilde{D}_{ij}\right) \in \mathfrak{M}_{\alpha, \beta}^\text{{cong}}$ and let $\lambda_\ell$ be such that $\widetilde{D}_{\beta_\ell, \lambda_\ell} = 1$, $1 \leq \ell \leq k - m$.
    Then, as $S_t$'s are symmetric and $V_i$'s are increasing, the following calculation finishes the proof
    % UnnumberedAlign
    \begin{equation}\label{eq: reason of symmetry}\begin{split}
        &\int_{(\mathbb{R}^{d})^{m}} \prod_{i = 1}^{k} \rho_i\left(\widetilde{D}\begin{pmatrix}
                \vy_1 \\
                \vdots \\
                \vy_{m}
        \end{pmatrix}\right) \d{\vy_1} \cdots \d{\vy_{m}} 
        = \int_{(\mathbb{R}^{d})^{m}} \prod_{i = 1}^k \rho_i{\left(\sum_{j = 1}^{m} \widetilde{D_{ij}} \vy_j\right)} \d{\vy_1} \cdots \d{\vy_{m}} \\
=& \int_{(\mathbb{R}^{d})^{m}} \prod_{i = 1}^{m} \rho_{\alpha_i}(\vy_i) \prod_{\ell = 1}^{k - m} \rho_{\beta_\ell}(\pm\bm{y}_{\lambda_\ell}) \d{\vy_1} \cdots \d{\vy_{m}} 
= \int_{(\mathbb{R}^{d})^m} \prod_{i = 1}^{m} \rho_{\alpha_i}(\bm{y}_i) = \prod_{i = 1}^{m} V_{\alpha_i}.
    \end{split}\end{equation}
\end{proof}% F}O}L}D

From Lemma~\ref{thmsodmaintermcong} and Lemma~\ref{lemestimatesfromrogers}, we find that:
% StatementofTheorem
\begin{theorem}[]\label{thmexpniconvcong}% F{O{L{D
    % Equation
    \begin{equation}\label{eqexpniconvcong}
        \mathbb{E}{\left(\prod_{i = 1}^{k} N_i\right)} \to \sum_{(\alpha, \beta)} M_{\alpha, \beta}^{\text{\emph{cong}}} \prod_{i = 1}^{m} V_{\alpha_i}.
    \end{equation}
\end{theorem}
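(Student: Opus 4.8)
The plan is to combine the two statements immediately preceding the theorem. By construction we have the exact splitting $\mathbb{E}(\prod_{i=1}^{k} N_i) = M_{d,k}^{\text{cong}} + R_{d,k}^{\text{cong}}$, and Lemma~\ref{thmsodmaintermcong} identifies the first summand as $\sum_{(\alpha,\beta)} M_{\alpha,\beta}^{\text{cong}} \prod_{i=1}^{m} V_{\alpha_i}$, which is exactly the claimed limit and is independent of $d$. Thus the entire content of the theorem is the assertion that $R_{d,k}^{\text{cong}} \to 0$ as $d \to \infty$, and I would in fact establish the stronger bound $R_{d,k}^{\text{cong}} = O((3/4)^{d/2})$, paralleling Lemma~\ref{thmremainderofaffine} in the affine case.

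To bound $R_{d,k}^{\text{cong}}$ I would observe that its terms are indexed by matrices $\widetilde{D'} \in \mathfrak{R}_1^k \cup \mathfrak{R}_2^k$, which sits inside $\bigcup_{m,u} \mathfrak{C}^k_{m,u} \subseteq \bigcup_{m,u} \mathfrak{D}^k_{m,u}$, and that the coefficient attached to each term satisfies $N'(\widetilde{D'}, u_0) \le u_0^{-d}$, where $u_0$ denotes the modulus of $\widetilde{D'}$; this is read off directly from Notation~\ref{widetilde D': cong}, using $N_D(\vl,t) \le 1$, $N(D,u)^d u^{-dr} \le u^{-d}$, and $\lcm(u,t) \le ut$, while the cases $\vl = \tp{(0,\ldots,0)}$ and $\widetilde{D'} \in \mathfrak{C}^k_{1,t}$ are immediate. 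Since the integral multiplying $N'(\widetilde{D'}, u_0)$ is exactly the quantity $I(\widetilde{D'}, u_0)$ of Lemma~\ref{lemestimatesfromrogers}, and since — as remarked in the proof of that lemma — the inequality $N'(\widetilde{D'}, u_0) \le u_0^{-d}$ is all that is needed there, each term of $R_{d,k}^{\text{cong}}$ is dominated by a corresponding term in one of the three sums of Lemma~\ref{lemestimatesfromrogers}: part (i) absorbs the matrices with $u_0 \ge 2$, part (ii) those with $u_0 = 1$ and some entry of modulus $\ge 2$, and part (iii) those with $u_0 = 1$, all entries of modulus $1$, and some row carrying at least two nonzero entries. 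As $\mathfrak{R}_1^k \cup \mathfrak{R}_2^k$ is contained in the union of these three index sets, summation gives $R_{d,k}^{\text{cong}} = O((3/4)^{d/2})$, which together with Lemma~\ref{thmsodmaintermcong} proves \eqref{eqexpniconvcong}.

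The one point genuinely requiring care — and the step I expect to be the main obstacle — is checking that this trichotomy is exhaustive in the congruence setting. Unlike in Rogers's original formula, the matrices $\widetilde{D'}$ produced by Theorem~\ref{higher moment formula} carry a translation column $[\widetilde{D'}]^1 = \tp{(1, \tilde\tau_1, \ldots, \tilde\tau_{k-1})}$ whose entries $\tilde\tau_i$ need not be $\pm 1$. To handle these one invokes the arithmetic already used in Remark~\ref{rementryonly1cong}: since $\tau_i = (\ell_i q + t)/t$ with $\gcd(t,q) = 1$, an entry of modulus $\le 1$ forces $\ell_i = 0$ when $q \ge 3$, and $\ell_i \in \{0,-1\}$ — hence $\tilde\tau_i = \pm 1$ — when $q = 2$, which is precisely where the symmetry of $S_d$ (also used in Lemma~\ref{thmsodmaintermcong}) is needed. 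Consequently any $\widetilde{D'}$ with a nontrivial translation component and $u_0 = 1$ either has an entry of modulus $\ge 2$, hence lies in $\mathfrak{R}_1^k$, or — only in the case $q = 2$ — is already accounted for among the $\pm 1$ matrices in $\mathfrak{M}^k$ treated by Lemma~\ref{thmsodmaintermcong}. With that verification in hand the remainder estimate proceeds exactly as in the affine case.
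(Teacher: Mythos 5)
Your proposal is correct and follows the paper's own route: the paper likewise obtains the theorem by combining Lemma~\ref{thmsodmaintermcong} for $M_{d,k}^{\text{cong}}$ with Lemma~\ref{lemestimatesfromrogers} for the remainder, relying on the remark in the proof of that lemma that only the bound $N(D,u)^d/u^{dr}\le u^{-d}$ (hence $N'(\widetilde{D'},u_0)\le u_0^{-d}$) is needed so the estimates transfer to $Y_{\vp/q}$. Your additional verification of the trichotomy covering $\mathfrak{R}_1^k\cup\mathfrak{R}_2^k$, including the arithmetic of the translation entries $\tau_i=(\ell_i q+t)/t$, simply makes explicit what the paper leaves implicit.
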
% F}O}L}D

% Subsubsection
\subsubsection{Proof of Theorem \ref{thmppmaincong}}%\label{}% F{O{L{D
For $q \geq 3$ the proof of Theorem \ref{thmppmaincong} follows the proof of Theorem \ref{thmppmainaffine}.
We need a small modification in Lemma \ref{lemsodlem3} for the case $q = 2$ because in this case the entries of matrices in $\mathfrak{M}^k$ can be negative.
From now on we only focus on the case $q = 2$ unless otherwise mentioned.

Let $\mathfrak{M}_{\alpha, \beta, +}^{\text{cong}}$ denote the subset of $\mathfrak{M}_{\alpha, \beta}^{\text{cong}}$ of matrices with positive entries and similarly let $\mathfrak{M}_+^k$ denote the subset of $\mathfrak{M}^k$ of matrices with positive entries.
With $M_{\alpha, \beta, +}^{\text{cong}} := \#(\mathfrak{M}_{\alpha, \beta, +}^{\text{cong}})$ note that
\[
    M_{\alpha, \beta}^{\text{cong}} = \#{\left(\mathfrak{M}_{\alpha, \beta}^{\text{{cong}}}\right)} = 2^{k - \#\alpha} M_{\alpha, \beta, +}^{\text{{cong}}}.
\]
Thus from \eqref{eqexpniconvcong} we find
% Equation
\begin{equation}\label{eqexpniconvcong'}
    \mathbb{E}{\left(\prod_{i = 1}^{k} \widetilde{N}_i\right)} \to \sum_{(\alpha, \beta)} 2^{-\#\alpha} M_{\alpha, \beta, +}^{\text{{cong}}} \prod_{i = 1}^{m} V_{\alpha_i},
\end{equation}
where $\widetilde{N}_i = \frac{1}{2}N_i$ for $1 \leq i \leq k$, i.e., $\widetilde{N}_t = \frac{1}{2}N_t$.

With the following lemma we can compare the RHS's of Theorem \ref{eqexpniconvcong'} and \eqref{eqexpofpp} for $\lambda = 1/2$.
% Lemma
\begin{lemma}[]\label{lemsodlem3i}% F{O{L{D
    There is bijection $g : \mathfrak{M}_+^k \to \mathscr{P}(k)$ with the property that if $\widetilde{D} \in \mathfrak{M}_+^k$ is an $k \times m$ matrix and $g(\widetilde{D}) = P = \{B_1, \ldots, B_{\#P}\}$ then $\#P = m$ and $\{\alpha_1 < \cdots < \alpha_m\} = \{i_{B_1} < \cdots < i_{B_m}\}$.
\end{lemma}% F}O}L}D
% Proof
\begin{proof}% F{O{L{D
    The same argument with Lemma \ref{lemsodlem3} holds.
\end{proof}% F}O}L}D

\eqref{eqexpofpp}, \eqref{eqexpniconvcong'} and Lemma \ref{lemsodlem3i} combine to show:
% StatementofTheorem
\begin{theorem}[]\label{thmexpniconvcongi}% F{O{L{D
    For $q = 2$
    \[
        \mathbb{E}{\left(\prod_{i = 1}^{k} \widetilde{N}_i\right)} \to \mathbb{E}{\left(\prod_{i = 1}^{k} N^{1/2}(V_i)\right)}.
    \]
\end{theorem}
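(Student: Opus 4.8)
The plan is to prove the stated limit by evaluating both sides as explicit finite sums indexed by combinatorial data and matching them via the bijection $g$ of Lemma~\ref{lemsodlem3i}. The point is that \eqref{eqexpniconvcong'} already identifies the limit of the left-hand side, so it suffices to check that the resulting expression coincides with $\mathbb{E}\big(\prod_{i=1}^k N^{1/2}(V_i)\big)$, which by \eqref{eqexpofpp} with $\lambda = 1/2$ equals $\sum_{P \in \mathscr{P}(k)} (1/2)^{\#P} \prod_{B \in P} V_{i_B}$.

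First I would collapse the double sum on the right-hand side of \eqref{eqexpniconvcong'} into a single sum over $\mathfrak{M}_+^k$. Since $M_{\alpha,\beta,+}^{\text{cong}} = \#\mathfrak{M}_{\alpha,\beta,+}^{\text{cong}}$ and $\mathfrak{M}_{\alpha,\beta,+}^{\text{cong}}$ is, by definition, the set of $\widetilde{D'} \in \mathfrak{M}_+^k$ with $I_{\widetilde{D'}} = \alpha$ (so that $\widetilde{D'}$ has exactly $m = \#\alpha$ columns), the sum over divisions $(\alpha,\beta)$ and over elements of $\mathfrak{M}_{\alpha,\beta,+}^{\text{cong}}$ becomes
\[
\sum_{(\alpha,\beta)} 2^{-\#\alpha} M_{\alpha,\beta,+}^{\text{cong}} \prod_{i=1}^m V_{\alpha_i} = \sum_{\widetilde{D'} \in \mathfrak{M}_+^k} 2^{-m(\widetilde{D'})} \prod_{i \in I_{\widetilde{D'}}} V_i ,
\]
where $m(\widetilde{D'})$ is the number of columns of $\widetilde{D'}$. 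Next I would transport this sum through $g$: writing $P = g(\widetilde{D'})$, Lemma~\ref{lemsodlem3i} gives $\#P = m(\widetilde{D'})$ and $\{i_B : B \in P\} = I_{\widetilde{D'}}$, hence $2^{-m(\widetilde{D'})} = (1/2)^{\#P}$ and $\prod_{i \in I_{\widetilde{D'}}} V_i = \prod_{B \in P} V_{i_B}$. Re-indexing by $P$ yields $\sum_{P \in \mathscr{P}(k)} (1/2)^{\#P} \prod_{B \in P} V_{i_B}$, and comparison with the Poisson formula above, together with \eqref{eqexpniconvcong'}, completes the proof.

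The substance of the argument lies entirely in Lemma~\ref{lemsodlem3i} (already secured via Corollary~\ref{Lemma for application}); what remains demands only care with the powers of $2$. The relation $M_{\alpha,\beta}^{\text{cong}} = 2^{k-\#\alpha} M_{\alpha,\beta,+}^{\text{cong}}$ accounts for the $2^{k-\#\alpha}$ sign patterns on the non-pivot rows of a matrix in $\mathfrak{M}_{\alpha,\beta}^{\text{cong}}$; the normalization $\widetilde N_i = \tfrac12 N_i$ contributes an overall $2^{-k}$, and $2^{-k}\cdot 2^{k-\#\alpha} = 2^{-\#\alpha}$ is precisely the Poisson weight $(1/2)^{\#P}$ with $\#P = \#\alpha$. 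The one slightly delicate point is that the collapse in the second step tacitly uses that every $\widetilde{D'} \in \mathfrak{M}_{\alpha,\beta}^{\text{cong}}$, and not merely those in $\mathfrak{M}_{\alpha,\beta,+}^{\text{cong}}$, contributes the same integral $\prod_{i=1}^m V_{\alpha_i}$; this is exactly the symmetric-set computation \eqref{eq: reason of symmetry} and should be cited when passing from \eqref{eqexpniconvcong'}.
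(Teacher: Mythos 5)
Your proposal is correct and follows essentially the same route as the paper: the paper's proof is precisely the combination of \eqref{eqexpniconvcong'}, the Poisson moment identity \eqref{eqexpofpp} with $\lambda=1/2$, and the bijection of Lemma~\ref{lemsodlem3i}, and your write-up simply spells out the re-indexing and the bookkeeping of powers of $2$ that the paper leaves implicit. Your closing remark about the role of the symmetry of $S_t$ (i.e.\ \eqref{eq: reason of symmetry}, already absorbed into Lemma~\ref{thmsodmaintermcong} and hence into \eqref{eqexpniconvcong'}) is accurate and consistent with the paper's treatment.
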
% F}O}L}D
 
% Corollary
\begin{corollary}[]\label{corconvindistcong}% F{O{L{D
    Let $q = 2$, $\bm{V} = (V_1, \ldots, V_k)$ and consider the random vectors
    \[
        \bm{\widetilde{N}}(\Lambda, \bm{V}) = {\left(\widetilde{N}_1(\Lambda), \ldots, \widetilde{N}_k(\Lambda)\right)}
    \]
    and
    \[
        \bm{N}(\bm{V}) = {\left(N^{1/2}(V_1), \ldots, N^{1/2}(V_k)\right)}.
    \]
    Then $\bm{\widetilde{N}}(\Lambda, \bm{V})$ converges in distribution to $\bm{N}(\bm{V})$ as $d \to \infty$.
\end{corollary}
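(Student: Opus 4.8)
\textbf{Proof of Corollary~\ref{corconvindistcong} (plan).}
The plan is to deduce the corollary from the convergence of joint moments in Theorem~\ref{thmexpniconvcongi} by the (multivariate) method of moments, following \cite[Corollary~1]{Sod}, while keeping track of the factor $\tfrac12$ that is forced upon us when $q=2$. First I would reduce an arbitrary mixed moment of the random vector $\bm{\widetilde N}(\Lambda,\bm V)$ to the shape handled by Theorem~\ref{thmexpniconvcongi}. Given a multi-index $\bm a=(a_1,\ldots,a_k)\in\ZZ_{\ge0}^k$ with $m:=a_1+\cdots+a_k$, expanding the product gives
\[
\prod_{i=1}^k \widetilde N_i(\Lambda)^{a_i}=2^{-m}\prod_{i=1}^k\Big(\sum_{\vv\in\Lambda}\rho_i(\vv)\Big)^{a_i}=2^{-m}\,\Siegel{m}{F}(\Lambda),
\]
where $F$ is the product of the indicator functions $\rho_i$, each repeated $a_i$ times, attached to the non-decreasing rearrangement $0<W_1\le\cdots\le W_m$ of the multiset in which each $V_i$ occurs $a_i$ times. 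Thus $\mathbb{E}\big(\prod_i\widetilde N_i(\Lambda)^{a_i}\big)=\mathbb{E}\big(\prod_{j=1}^m\widetilde N_{W_j}(\Lambda)\big)$, which by Theorem~\ref{thmexpniconvcongi} (applied with $k$ replaced by $m$ and thresholds $W_1\le\cdots\le W_m$, ties being permitted) converges, as $d\to\infty$, to $\mathbb{E}\big(\prod_{j=1}^m N^{1/2}(W_j)\big)=\mathbb{E}\big(\prod_{i=1}^k N^{1/2}(V_i)^{a_i}\big)$, the last identity being \eqref{eqexpofpp} with $\lambda=1/2$ (which likewise allows equal thresholds). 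The prelimit moments are finite once $d$ is large relative to $m$, since $F$ is bounded with compact support and Theorem~\ref{higher moment formula} applies, so this statement is meaningful for all large $d$.

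Second I would check that the limiting law is moment-determinate. Since $0\le N^{1/2}(V_1)\le\cdots\le N^{1/2}(V_k)$, we have $\|\bm N(\bm V)\|\le \sqrt k\,N^{1/2}(V_k)$, and $N^{1/2}(V_k)$ is Poisson with mean $V_k/2$, so $\mathbb{E}\big(e^{\varepsilon\|\bm N(\bm V)\|}\big)<\infty$ for every $\varepsilon>0$. Hence the distribution of $\bm N(\bm V)$ has a moment generating function finite near the origin, so it satisfies the multivariate Carleman condition and is determined by its moments. The vector-valued method of moments (cf. \cite[Theorem~30.2]{Bil} applied together with the Cram\'er--Wold device, as in \cite[Corollary~1]{Sod}) then yields $\bm{\widetilde N}(\Lambda,\bm V)\to\bm N(\bm V)$ in distribution as $d\to\infty$.

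The only genuinely delicate point is the moment reduction in the case $q=2$: because the matrices in $\mathfrak M^k$ may carry entries $-1$ (Remark~\ref{rementryonly1cong}), the passage from $\mathbb{E}\big(\prod_j\widetilde N_{W_j}\big)$ to $\mathbb{E}\big(\prod_j N^{1/2}(W_j)\big)$ on the enlarged threshold set uses the symmetry hypothesis on the sets $S_t$ in the same way as the second-to-last equality of \eqref{eq: reason of symmetry}; this is precisely what is encoded in Theorem~\ref{thmexpniconvcongi} and the bijection of Lemma~\ref{lemsodlem3i}, so no new computation is required beyond invoking them. Everything else is the routine bookkeeping of \cite[Corollary~1]{Sod}, and for $q\ge3$ the argument is literally that of Corollary~\ref{corconvindist}.
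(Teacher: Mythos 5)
Your proposal is correct and follows essentially the same route as the paper, which omits the details and simply points to the method-of-moments argument of S\"odergren's Corollary~1: reduce mixed moments with powers to the joint moments of Theorem~\ref{thmexpniconvcongi} by repeating thresholds (ties being allowed there), and conclude via moment-determinacy of the Poisson limit. Your write-up supplies exactly the bookkeeping the paper leaves implicit, including the correct handling of the factor $2^{-m}$ and the $q=2$ sign issue already absorbed into Theorem~\ref{thmexpniconvcongi} and Lemma~\ref{lemsodlem3i}.
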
% F}O}L}D
% Proof
\begin{proof}% F{O{L{D
    This proof follows similar line of argument as \cite[Corollary 1]{Sod}.
    We omit it for the sake of brevity.
\end{proof}% F}O}L}D

Corollary \ref{corconvindistcong} implies that all finite dimensional distributions coming from the process $\{\widetilde{N}_t(\Lambda) : t \geq 0\}$ converge to the corresponding finite dimensional distributions of the Poisson process $\{N^{1/2}(t) : t \geq 0\}$ as $d \to \infty$.
By \cite[Theorem 12.6 and Theorem 16.7]{Bil}, we see that the process $\{N_t(\Lambda) : t \geq 0\}$ converges weakly to the process $\{N^{1/2}(t) : t \geq 0\}$ as $d \to \infty$. \\

Corollary \ref{corconvindistcong}, with $k = 1$, is a generalization of \cite[Theorem 3]{Rogers56-2} to the congruence case.

% Strombersson-Sodegren New formula %%%%%%%%%%%%%%%%%%%%%%%%%%%%%%%%%%%%%%%%%%%%%%%%%%%%%%%%%%
% F}O}L}D
% F}O}L}D
% F}O}L}D
% Section
\section{New Moment Formulae}% F{O{L{D
In this section, we want to simplify Theorem~\ref{higher moment formula: affine} and Theorem~\ref{higher moment formula} for the special case as considered by Str\"{o}mbergsson and S\"{o}dergren in \cite{StSo}. Theorems \ref{New Rogers: affine} and \ref{New Rogers: congruence} below will be used in section \ref{section Limit in distribution}.

For bounded and compactly supported functions $f_i: \RR^d \rightarrow \RR_{\ge 0}$ ($1\le i \le k$), define 
\begin{equation}\label{eq: F}
F_i(\vv)=f_i(\vv) -\int_{\RR^d} f_i \d \vv.
\end{equation}
We want to compute the integrals of $\Siegel{k}{\prod_{i=1}^k F_i}=\prod_{i=1}^k \widehat {F_i}$ over $Y$ and $Y_{\vp/q}$.

We first observe that by applying Theorem~\ref{higher moment formula: affine},
\begin{equation}\label{eq:new}\begin{split}
&\int_{Y} {\prod_{i=1}^k \widehat F_i}(\Lambda) \d\mu_Y (\Lambda)
=\int_{Y} \prod_{i=1}^k \left(\widehat{f_i}(\Lambda) - \int_{\RR^d} f_i \d \vv \right)\\
%%%
&=\sum_{A\subseteq \{1, \ldots, k\}} (-1)^a \left(\prod_{i''\in A}\int_{\RR^d} f_{i''} \d \vv\right)\int_{Y} {\prod_{i\in A^c} \widehat{f_i}}(\Lambda) \d\mu_Y(\Lambda) \\
%%%
&=\sum_{A\subseteq \{1, \ldots, k\}} (-1)^a \prod_{i''\in A}\int_{\RR^d} f_{i''} \d \vv\;\times\\
&\left(\sum_{m=1}^{k-a} \sum_{u\in \NN} \sum_{\widetilde {D} \in \mathfrak A^{k-a}_{m,u}}
\frac {N( \widetilde{D},u)^d} {u^{dm}}\int_{(\RR^d)^m} \left(\prod_{i \in A^c}f_i\right)\left(\frac {\widetilde{D}}{u}\left(\begin{array}{c} 
\vw_1\\
\vdots\\
\vw_m \end{array}\right)\right) \d\vw_1 \cdots \d\vw_m
\right),
\end{split}\end{equation}
where $a=\# A$ and $A^c=\{1,\ldots,k\}-A$. 

Note that for a given $A\subseteq \{1, \ldots, k\}$ and $\widetilde{D}\in \mathfrak A^{k-a}_{m,u}$, one can find a unique matrix $D''=D''(A, \widetilde{D})\in \mathfrak D^k_{m+a,u}$ (in fact, $\mathfrak A^k_{m+a,u}$) for which
\begin{equation}\label{relation in D''}
\begin{split}
&\left(\prod_{i'' \in A} \int_{\RR^d} f_{i''}\: \d \vv\right)\cdot
\int_{(\RR^d)^m} \left(\prod_{i \in A^c} f_i\right) \left(\frac {\widetilde{D}} u\left(\begin{array}{c} 
\vw_1\\
\vdots\\
\vw_m \end{array}\right)\right) \d\vw_1 \cdots \d\vw_m\\
&\hspace{1in}=\int_{(\RR^d)^{m+a}} \left(\prod_{i=1}^k f_i\right) \left(\frac {D''} u \left(\begin{array}{c}
\vw_1 \\
\vdots \\
\vw_{m+a}\end{array}\right)\right)\d\vw_1 \cdots \d\vw_{m+a}.
\end{split}\end{equation}
Moreover, from the definitions of $N(D'', u)$ and $N(\widetilde{D}, u)$ in Notation~\ref{notation} (3), one can directly obtain the following equality.
\begin{equation}\label{D'' tilde D}
\frac {N(D'',u)^d} {u^{dn}}=\frac {N(\widetilde{D},u)^d}{u^{dm}}.
\end{equation}

We claim the following.

\begin{theorem}\label{New Rogers: affine}
    For $1\le i\le k$, let $F_i$ be the function defined as in \eqref{eq: F} for a bounded and compactly supported function $f_i:\RR^d\rightarrow \RR_{\ge 0}$ ($1\le i\le k$). It follows that
\begin{equation}\label{eq 0: New Rogers affine}
\begin{split}
\int_{Y} {\prod_{i=1}^k \widehat{F_i}}(\Lambda) \d\mu_Y (\Lambda)
=\sum_{n=1}^{k-1} \sum_{u\in \NN} \sum_{D''\in \mathfrak S^k_{n,u}}  
\frac {N(D'',u)^d} {u^{dn}}\int_{(\RR^d)^n} \prod_{i=1}^k f_i \left(\frac {D''} {u} \left(\begin{array}{c}
\vw_1 \\
\vdots \\
\vw_n\end{array}\right)\right)\d\vw_1 \cdots \d\vw_n,
\end{split}\end{equation}
where $\mathfrak S^k_{n,u}\subseteq \mathfrak A^k_{n,u}$ is the set of $D''$ which is one of the following:
\begin{enumerate}[(a)]
\item Each column of $[D'']$ has at least two nonzero elements.
\item There are $0\le a\le n-2$ and $D\in \mathfrak D^{k-a-1}_{n-a-1,u}-\mathfrak A^{k-a-1}_{n-a-1,u}$ for which 
\[
D''=\left(\begin{array}{c|c}
u\Id_a & \\
\hline
 & \begin{array}{c|c}
 u & 0 \cdots 0\\
 \hline
\begin{array}{c}
0 \\
\vdots \\
0 \end{array} & D \end{array}
\end{array}\right),
\]
where each column of $D$ has at least two nonzero elements. 
\end{enumerate}
\end{theorem}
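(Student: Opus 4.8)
The plan is to expand the product $\prod_{i=1}^k \widehat{F_i}$ using the definition $F_i = f_i - \int_{\RR^d} f_i$, apply the affine higher moment formula (Theorem~\ref{higher moment formula: affine}) to each resulting term $\int_Y \prod_{i \in A^c} \widehat{f_i} \, d\mu_Y$, and then reorganize the double sum (over subsets $A$ and over matrices $\widetilde{D'} \in \mathfrak{A}^{k-a}_{m,u}$) into a single sum over a new family of matrices $D'' \in \mathfrak{S}^k_{n,u}$. Most of this bookkeeping is already carried out in the excerpt preceding the theorem statement; what remains for the proof is to verify (i) that the map $(A, \widetilde{D'}) \mapsto D''(A, \widetilde{D'})$ defined by the block-diagonal insertion of $u\,\Id_a$ realizes the claimed identity~\eqref{relation in D''}, (ii) that the image of this map is precisely $\mathfrak{S}^k_{n,u}$ as described by the two alternatives (a) and (b), and (iii) that the map is injective, so no cancellation-by-collision occurs and the sign $(-1)^a$ is well-defined on the image.

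**First I would** record the identity~\eqref{relation in D''}: given $A = \{i_1 < \cdots < i_a\}$ and $\widetilde{D'} \in \mathfrak{A}^{k-a}_{m,u}$, the matrix $D''$ is built by placing rows of $u\,\ve_\ell^{\mathrm t}$ (scaled standard basis vectors) in the positions indexed by $A$ and the rows of $\widetilde{D'}$ in the positions indexed by $A^c$, after prepending $a$ new columns. Integrating $\prod_{i=1}^k f_i\bigl(\tfrac{D''}{u}(\vw_1,\dots,\vw_{m+a})^{\mathrm t}\bigr)$ in the variables $\vw_1, \dots, \vw_a$ (which appear only through the $u\,\Id_a$ block, i.e. $f_{i_s}(\vw_s)$) produces exactly the factor $\prod_{i'' \in A}\int_{\RR^d} f_{i''}\, d\vv$, leaving the $\widetilde{D'}$-integral in the remaining variables. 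The equality $N(D'',u)^d/u^{dn} = N(\widetilde{D'},u)^d/u^{dm}$ follows because adjoining an identity block of size $a$ multiplies both $N(\cdot,u)$ by $u^a$ and $u^{d\cdot(\text{rank})}$ by $u^{da}$ (here $n = m+a$), so the quotient is unchanged; this is the direct check referenced just before the theorem.

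**Next I would** identify the image. For $A = \varnothing$ (so $a=0$), $D'' = \widetilde{D'} \in \mathfrak{A}^k_{m,u}$ is exactly case (a): every column has at least two nonzero entries because this is the defining property distinguishing genuine $\widetilde{D'}$'s from those that split off an identity block, and the first-column relation $[D'']^1 = (u,\dots,u)^{\mathrm t} - \sum_{j\ge 2}[D'']^j$ is built into the definition of $\widetilde{D'}$ in Notation~\ref{notation widetilde D'}. For $A \neq \varnothing$, I would argue that pulling out \emph{all} the "trivial" columns — those contributing the $u\,\Id$ structure, together with the one extra column of $u$'s coming from the $D'$-construction that does not get absorbed — yields the block form in case (b): a $u\,\Id_a$ block, then the distinguished column $(u,0,\dots,0)^{\mathrm t}$, then a core matrix $D \in \mathfrak{D}^{k-a-1}_{n-a-1,u}$ each of whose columns has $\ge 2$ nonzero entries, and whose first column is \emph{not} of the special form (otherwise another identity column could be split off, contradicting maximality of $a$). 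Conversely, any such $D''$ arises this way. The main subtlety is to pin down $a$ canonically: I would define $a$ as the number of rows of $D''$ equal to $u\,\ve_\ell^{\mathrm t}$ for a basis index $\ell$ that labels a column all of whose other entries vanish — i.e. the maximal "identity part" — and check this extracts a well-defined $A$ and hence a well-defined sign.

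**The hard part will be** the injectivity and exhaustiveness of the correspondence $(A, \widetilde{D'}) \leftrightarrow D''$, i.e. showing the decomposition into "$u\,\Id_a$ block plus core" is unique, so that the sign $(-1)^a$ attached to $D''$ in~\eqref{eq 0: New Rogers affine} is unambiguous and no two $(A,\widetilde{D'})$ pairs map to the same $D''$. This is a combinatorial argument about admissible matrices: one must show that the rows that can be "peeled off" as $u\,\ve_\ell^{\mathrm t}$ are intrinsically determined by $D''$ (they are exactly the rows $[D'']_i$ that, together with their pivot column, form a $u\,\Id_1$ block isolated from the rest), and that after removing them the surviving first column cannot accidentally satisfy the sum relation of case (a). Once this structural dichotomy is established, assembling~\eqref{eq 0: New Rogers affine} is just substituting~\eqref{relation in D''} term-by-term into the expansion displayed before the theorem and relabeling $m + a =: n$, with the ranges $1 \le n \le k-1$ inherited from the ranges $1 \le m \le k-a$ in Theorem~\ref{higher moment formula: affine} (the cases $n = k$ and the pure integral term being excluded exactly as in the affine moment formula).
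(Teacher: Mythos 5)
The setup in your first two paragraphs (the binomial expansion over subsets $A$, the identity \eqref{relation in D''}, and the equality $N(D'',u)^d/u^{dn}=N(\widetilde{D'},u)^d/u^{dm}$) matches the paper, but the combinatorial core of your argument rests on a false claim: the map $(A,\widetilde{D'})\mapsto D''(A,\widetilde{D'})$ is \emph{not} injective, and its image is strictly larger than $\bigcup_{n,u}\mathfrak S^k_{n,u}$. Already for $k=2$ the matrix $D''=\Id_2$ arises from $(A,\widetilde{D'})=(\varnothing,\Id_2)$, from $A=\{1\}$ with $\widetilde{D'}=(1)$, from $A=\{2\}$ with $\widetilde{D'}=(1)$, and from $A=\{1,2\}$; the four signed contributions are $+1,-1,-1,+1$ and cancel, which is exactly why $\Id_2$ does not occur in \eqref{eq 0: New Rogers affine} (and why the second moment collapses to $\int f_1f_2$). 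The theorem is an inclusion--exclusion statement: for each fixed $D''$ one must evaluate the alternating sum $\sum(-1)^{\#A}$ over \emph{all} pairs $(A,\widetilde{D'})$ producing that $D''$; the set $\mathfrak S^k_{n,u}$ is precisely the set of $D''$ for which this sum is nonzero, and $(-1)^a$ is its value, not the sign of a unique preimage. Your plan to ``pin down $a$ canonically'' as the size of a maximal identity part and attach the sign of that one preimage gives the wrong coefficient: in case (b) the matrix has $\#B=a+1$ isolated identity rows (the $u\Id_a$ block together with the row $(u,0,\dots,0)$), the pair with $A=B$ is not an admissible preimage at all, and the surviving coefficient $(-1)^a=(-1)^{\#B-1}$ is the value of $\sum_{A\subsetneq B}(-1)^{\#A}=-(-1)^{\#B}$.

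The paper's actual proof is organized around this cancellation. It introduces the set $B$ of rows of $D''$ equal to $u\ve_{j_0}$ for a column $j_0$ with no other nonzero entry, fixes the maximal row index $i_0$ in which the first row of some $\widetilde{D'}$ can be embedded, and splits $B=B_1\sqcup B_2$ about $i_0$. If $B_2\neq\varnothing$, every subset of $B_2$ occurs as part of an admissible $A$ and the alternating sum vanishes; if $B=\varnothing$ one is in case (a); if $B\neq\varnothing$ and $B_2=\varnothing$, the sum either leaves a single term of sign $(-1)^{\#B-1}$ (case (b)) or vanishes entirely, the dichotomy being governed by whether the first column of the core $D$ satisfies $[D]^1=(u,\dots,u)^{\mathrm t}-\sum_{j\ge 2}[D]^j$ --- the condition excluded in case (b). That condition is a cancellation criterion (it decides whether $A=B$ itself is an admissible preimage), not a maximality-of-$a$ criterion as you describe. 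To repair your proof you must abandon injectivity and carry out this signed count over all preimages of each $D''$.
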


%%%%%%%%%%%%%%%%%%%%%%%%%%%%%%%%%%%%%%%%%%%%%%%%%%%%%%%%%%%%%%%%%%%%%%%%%%%%%%%%%%%%%%%%%%
Similarly, from Theorem~\ref{higher moment formula}, we have that $\int_{Y_{\vp/q}} \prod_{i=1}^k \widehat{F_i} (\Lambda) d\mu_q$ is the sum of integrals given as in \eqref{eq:new} with replacing $\mathfrak A^{k-a}_{m,u}$ by $\mathfrak C^{k-a}_{m,u}$. 
For $D''=D''(A,\widetilde D)\in \mathfrak D^k_{m+a,u}$ defined using $A\subseteq \{1, \ldots, k\}$ and $\widetilde D\in \mathfrak C^{k-a}_{m,u}$ as in \eqref{relation in D''}, we will see that $D''\in \mathfrak C^k_{m+a,u}$. 
It is easily seen that the equality \eqref{D'' tilde D} holds in the congruence case, either.

\begin{theorem}\label{New Rogers: congruence}
For $1\le i\le k$, let $F_i$ be the function defined as in \eqref{eq: F} for a bounded and compactly supported function $f_i:\RR^d\rightarrow \RR_{\ge 0}$ ($1\le i\le k$). It follows that
\[\begin{split}
&\int_{Y_{\vp/q}} {\prod_{i=1}^k \widehat F_i}(\Lambda) \d\mu_q (\Lambda)\\
&\hspace{0.4in}=\sum_{n=1}^{k-1} \sum_{u\in \NN} \sum_{D''\in \mathfrak T^k_{n,u}} \frac {N(D'',u)^d} {u^{dn}}
\int_{(\RR^d)^n} \prod_{i=1}^k f_i \left(\frac {D''} {u}  \left(\begin{array}{c}
\vw_1 \\
\vdots \\
\vw_n\end{array}\right)\right)\d\vw_1 \cdots \d\vw_n,
\end{split}\]
where $\mathfrak T^k_{n,u}$ is a subset of $\mathfrak C^k_{n,u}$ collecting $D''$ which is one of the following:
\begin{enumerate}[(a)]
%%%%%
\item Each column of $D''$ has at least two nonzero elements.
%%%%%
\item There are $0\le a\le n-2$ and $\widetilde D\in \mathfrak C^{k-a}_{n-a,u}$ so that 
\[
D''=\left(\begin{array}{cc}
u\Id_a & \\
 & \widetilde{D} \end{array}
\right),
\]
where $[\widetilde{D}]^1={\tp{(u, 0, \ldots, 0)}}$ and any other columns of $\widetilde{D}$ have at least two nonzero elements.
Moreover, the right-bottom minor of $[\widetilde{D}]$ with size $(k-a-1)\times(n-a-1)$ is not an element of $\mathfrak C^{k-a-1}_{n-a-1,u}$ (or any $\mathfrak C^{k-a-1}_{n-a-1,*}$).
\end{enumerate}
\end{theorem}
%%%%%%%%%%%%%%%%%%%%%%%%%%%%%%%%%%%%%%%%%%%%%%%%%%%%%%%%%%%%%%%%%%%%%%%%%%%%%%%%%%%%%%%%
\begin{proof}[Proof of Theorem~\ref{New Rogers: affine} and Theorem~\ref{New Rogers: congruence}]
%%%%%%%%%%%%%%%%%%%%%%%%%%%%%%%%%%%%%%%%%%%%%%%%%%%%%%%%%%%%%%%%%%%%%%%%%%%%%%%%%%%%%%%%
As described in \eqref{relation in D''}, a possible matrix $D''$ among elements of $\mathfrak D^k_{n,u}$ is constructed by using $A\subseteq \{1, \ldots, k\}$ and $\widetilde{D}\in \mathfrak A^{k-a}_{n-a,u}$.
Conversely, we want to consider all possible pairs $(A, \widetilde{D})$ which give the same $D''$.

Let such a $D''=(d''_{ij})$ be given. Denote
\[
B=\left\{1\le i''\le k :
\begin{array}{l}
\text{$1\le \exists j_0\le n$ for which}\\
\hspace{0.3in}\text{$d''_{i'' j}=0$ for all $j$ except $d''_{i'' j_0}=u$ and}\\[0.05in]
\hspace{0.3in}\text{$d''_{i j_0}=0$ for all $i$ except $d''_{i'' j_0}=u$}
\end{array}\right\}.
\]
After changing the (last $(k-b_1)$) coordinates of $\RR^k$, we may assume that
\begin{equation}\label{D''}
\frac {D''} {u}=\left(\begin{array}{ccc}
\Id_{b_1} & &  \\ 
& \begin{array}{c}
\dfrac {\widetilde {D_0}} {u} \end{array}& \\ 
& & \Id_{b_2}
\end{array}\right),
% \quad\text{with}\quad
% \frac {\widetilde{D_0}} {u}=\left(\begin{array}{c|c}
% 1 & \\ \hline
% \begin{array}{c}
% \tilde{\tau}_1 \\ \vdots \\ \tilde{\tau}_{k_0}\end{array} & \dfrac {D_0} {u}
% \end{array}\right)
\end{equation}
where $b_1$ and $b_2$ could be $0$ (then $D''/u$ will be one- or two-block diagonal matrix) and $\widetilde D_0\in \mathfrak A^{k-b_1-b_2}_{n-b_1-b_2,u}$ (or $\mathfrak C^{k-b_1-b_2}_{n-b_1-b_2,u}$, respectively) is the minimal size among possible $(A,\widetilde{D})$ for which $D''(A,\widetilde{D})=D''$, i.e., 
\begin{center}
    each column of $\widetilde D_0$ except $[\widetilde{D_0}]^1$ has at least two nonzero elements.
\end{center}

Notice that any matrix constructed by choosing more than $k-b_1-b_2$ rows and more than $n-b_1-b_2$ columns from $D''/u$ and having $\widetilde{D_0}/u$ as its minor is element of $\mathfrak A^*_{*,u}$ (or $\mathfrak C^*_{*,u}$, respectively).
For example, $D''\in \mathfrak C^k_{n,u}$ since $D''$ is constructed by $(\overline{D},t,{\tp{(0,\ldots, 0, \vl, 0, \ldots, 0)}})$ under the map in \eqref{transition: cong}, where $\overline{D}$ is the right-bottom minor of $uD''$ with size $(k-1)\times (n-1)$, and $(t, \vl)$ is a pair used for defining $\widetilde D$.

Now let us check case by case.
Denote by 
\[B_1=\{i\in B: i \le b_1+1\}
\quad\text{and}\quad 
B_2=\{k-b_2+1, \ldots, k\}\] 
so that $B=B_1\cup B_2$. Note that $(b_1+1)$ could be not contained in $B$.
Observe that possible $A$ for constructing $D''$ is of the form $A_1 \cup A_2$, where $A_1 \subseteq B_1$ and $A_2 \subseteq B_2$. The difference between $A_1$ and $A_2$ is that $A_1$ may have an extra condition according to the given $D''$, but any subset of $B_2$ can be $A_2$. 

\vspace{0.1in}
%%%%%%%%%%%%%%
We first assume that $B_2\neq \emptyset$.
Since
\[\begin{split}
\sum_{\scriptsize \begin{array}{c}
\text{``possible''}\\
 A\subseteq B\end{array}} (-1)^{\# A}
&=\sum_{\scriptsize \begin{array}{c}
\text{``possible''}\\
A_1 \subseteq B_1\end{array}} (-1)^{\# A_1} \sum_{\forall A_2 \subseteq B_2} (-1)^{\# A_2}\\
&=\sum_{\scriptsize \begin{array}{c}
\text{``possible''}\\
A_1 \subseteq B_1\end{array}} (-1)^{\# A_1} \cdot 0 = 0,
\end{split}\]
%the partial sum in the right hand side of \eqref{eq 0: New Rogers affine} for this particular $D''$ is
with the observation in \eqref{relation in D''}, the partial sum 
\begin{equation}\begin{split}\label{partial sum}
&\sum_{\scriptsize \begin{array}{c}
A, \widetilde{D}\text{ s.t.}\\
D''(A, \widetilde{D})=D''\end{array}}\hspace{-0.15in}
(-1)^{\# A}\:\frac {N(D'',u)^d} {u^{dr}}
%&\hspace{0.5in}\times
\int_{(\RR^d)^n} \prod_{i=1}^k f_i \left(\frac {D''} u \left(\begin{array}{c}
\vw_1 \\
\vdots \\
\vw_{m+a}\end{array}\right)\right)\d\vw_1\cdots \d\vw_{m+a}
\end{split}\end{equation}
associated with $D''$ in the right hand side of \eqref{eq 0: New Rogers affine} is zero.

\vspace{0.1in}
Now let us assume that $b_2=0$. 
If $B=\emptyset$, that is, 
\begin{center}
    Each column of $D''$ has at least two nonzero vectors,
\end{center}
and only possible $(A, \widetilde{D})$ is $(\emptyset, D'')$. This is the case (a) in the theorem.

%%%
Suppose that $|B|=b_1\ge 1$. Equivalently, suppose that  
\begin{center}
    $[\widetilde {D_0}]$ as well as other columns of $\widetilde D_0$ has at least two nonzero elements.
\end{center}
Then any subset $A$ of $B$ is possible for defining $D''$, hence the partial sum \eqref{partial sum} is zero.

%%%
The only left case is when $|B|=b_1+1\ge 2$. Notice that 
\begin{center}
    $[\widetilde{D_0}]^1={\tp{(u,0,\ldots,0)}}$, and\\
    the right-bottom minor of $\widetilde{D_0}$ with size $(k-b_1-1)\times (n-b_1-1)$\\ 
    is not an element of $\mathfrak A^{k-b_1-1}_{n-b_1-1, u}$ ($\mathfrak C^{k-b_1-1}_{n-b_1-1, u}$, respectively).
\end{center}
In this case, any subset of $B$ except $B$ itself can be possible A for defining $D''$, and this is the case (b) in the theorem.
\end{proof}

\section{CLT and Brownian motion}\label{section Limit in distribution}% F{O{L{D

As in Section~\ref{Poisson}, we will use the method of moments which is applicable with the normal distribution and Brownian motion, following \cite{StSo}.
Recall that the $k$-th moment of the normal distribution is $0$ when $k$ is odd and $(k-1)!!$ when $k$ is even.

For Brownian motion, it suffices to show that the induced measure $P^1_d$ and $P^{\vp/q}_d$ from $Z^1_d(t)$ and $Z^{\vp/q}_d(t)$ respectively, on the space $C[0,1]$ of continuous real-valued functions on $[0,1]$, weakly converge to Wiener measure as $d$ goes to infinity. 

\vspace{0.1in}
%%%%%%%%%%%%%%
Let $\phi:\NN \rightarrow \RR_{>0}$ be a function for which $\lim_{d\rightarrow \infty} \phi(d)=\infty$ and $\phi(d)=O_{\varepsilon}(e^{\varepsilon d})$ for every $\varepsilon>0$.
Let $\iota\in \NN$ and $c_1, \ldots, c_\iota>0$ be arbitrarily given. For each $d\in \NN$, consider $S_{i,d}\in \RR^{d}$ be a Borel measurable set satisfying $\vol(S_{i,d})=c_i \phi(d)$ for $1\le i\le \iota$ and $S_{i,d}\cap S_{i',d}=\emptyset$ if $i\neq i'$.
If we consider the case that $\Lambda \in Y_{{\vp}/q}$ with $q=2$, we further assume that $S_{i,d}=-S_{i,d}$ for $1\le i\le \iota$ and $d\in \NN$.

Let 
\[\begin{split}
Z^1_{i,d}&:= \frac {\#(\Lambda \cap S_{i,d})-c_i\phi(d)}{\sqrt{\phi(d)}},\hspace{1.25in}\Lambda\in Y\;\text{and}\\[0.1in]
Z^{\vp/q}_{i,d}&:=\left\{\begin{array}{cl}
\dfrac {\#(\Lambda \cap S_{i,d})-c_i\phi(d)}{\sqrt{\phi(d)}}, &\text{if } q\neq 2;\\[0.2in]
\dfrac {\#(\Lambda \cap S_{i,d})-c_i\phi(d)}{\sqrt{2\phi(d)}}, &\text{if } q=2,
\end{array}  
\right.\quad \Lambda \in Y^{\vp/q}.
\end{split}\]

\begin{proposition}\label{SS Proposition 4.1} 
Let $\ds=1$ or $\vp/q$.
For any fixed $\vk=(k_1, \ldots, k_\iota)\in \NN^\iota$, it follows that
\[\begin{split}
&\lim_{d\rightarrow \infty} \EE\left((Z^\ds_{1,d})^{k_1} \cdots (Z^\ds_{\iota,d})^{k_\iota}\right)\\
&\hspace{0.5in}=\left\{\begin{array}{cl}
\prod_{i=1}^\iota c_i^{k_i/2} (k_i-1)!!, &\text{if } k_1, \ldots, k_\iota \text{ are all even},\\[0.05in]
0, &\text{ otherwise.}\end{array}\right.
\end{split}\]
\end{proposition}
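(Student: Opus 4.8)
The plan is to follow the method of moments exactly as in Strömbergsson--Södergren, using the "new moment formulae" of Theorem \ref{New Rogers: affine} and Theorem \ref{New Rogers: congruence} as the analytic input. First I would expand each $(Z^\ds_{i,d})^{k_i}$ using $\#(\Lambda\cap S_{i,d}) - c_i\phi(d) = \widehat{F_{i}}(\Lambda)$ with $F_i(\vv) = \rho_{i}(\vv) - \vol(S_{i,d})$, where $\rho_i$ is the indicator of $S_{i,d}$ and $\int_{\RR^d}\rho_i = c_i\phi(d)$; note the normalizing denominator is $\sqrt{\phi(d)}$ (or $\sqrt{2\phi(d)}$ when $q=2$). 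Writing $k = k_1 + \cdots + k_\iota$, the product $\prod_i (Z^\ds_{i,d})^{k_i}$ becomes $(\phi(d))^{-k/2}$ (times the appropriate power of $2$) multiplied by $\Siegel{k}{\prod_{j=1}^k F_{\sigma(j)}}$ where $\sigma$ assigns $k_i$ of the indices $j$ to each block $i$. Applying the relevant new moment formula, the expectation is a sum over admissible matrices $D''\in \mathfrak S^k_{n,u}$ (resp. $\mathfrak T^k_{n,u_0}$) of integrals of $\prod_j f_{\sigma(j)}$ composed with $D''/u$.

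The second step is the combinatorial/asymptotic analysis of which terms survive in the limit $d\to\infty$. By the structure of $\mathfrak S^k_{n,u}$ (case (a)/(b) of Theorem \ref{New Rogers: affine}) and $\mathfrak T^k_{n,u_0}$, every admissible $D''$ has the property that each of its $n$ columns has at least two nonzero entries. Crucially this forces $n \le k/2$, so each integral $\int_{(\RR^d)^n}\prod_j f_{\sigma(j)}(\tfrac{D''}{u}\vw)\,d\vw$ contributes a factor that is $O((\phi(d))^n) = O((\phi(d))^{k/2})$, matching the normalization; terms with $n < k/2$ vanish after division by $(\phi(d))^{k/2}$. One then shows, exactly as in \cite{StSo} and using the estimates packaged in Lemma \ref{lemestimatesfromrogers}, that the contribution of all $D''$ with $u > 1$, or with some entry of modulus $\ge 2$, or with some column having a "wrong" shape, is $O((3/4)^{d/2})$ relative to $(\phi(d))^{k/2}$, hence negligible since $\phi(d) = O_\varepsilon(e^{\varepsilon d})$. (Here for $q=2$ one uses the symmetry assumption $S_{i,d} = -S_{i,d}$ to handle $\pm 1$ entries, as in Remark \ref{rementryonly1cong}.) What remains is the sum over $0$--$1$ (or $\pm 1$) matrices $D''$ with $u=1$, exactly $n = k/2$ columns, each column with exactly two nonzero entries, and — because the $f_i$ supported on disjoint $S_{i,d}$ force the two rows hit by a given column to lie in the same block $i$ — each column pairs up two of the $k_i$ indices within a single block. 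Such matrices exist only when every $k_i$ is even, and then the surviving integral for each $D''$ equals $\prod_i (c_i\phi(d))^{k_i/2}$, while (after dividing by the normalization, including the $2^{k/2}$ when $q=2$, which is absorbed by the fact that each block of size $k_i$ contributes $2^{k_i/2}$ choices of sign in the $q=2$ case exactly cancelling the $2^{-k_i/2}$) the number of such $D''$ is $\prod_i (k_i - 1)!!$, the number of perfect matchings of a $k_i$-element set. Assembling these gives the stated limit $\prod_i c_i^{k_i/2}(k_i-1)!!$, and $0$ when some $k_i$ is odd.

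The main obstacle I expect is the bookkeeping in the last step: verifying precisely that, after the disjoint-support constraint kills all cross-block columns, the surviving matrices in $\mathfrak S^k_{n,1}$ (resp. the cases (a)/(b)/(c) of $\mathfrak T^k_{n,1}$ with all $N''(D'',u_0)$ equal to $1$ via Corollary \ref{Lemma for application}) are in bijection with perfect matchings compatible with the block partition, that their weights $N(D'',1)^d/1^{dn}$ are all exactly $1$, and — in the congruence case — that the possibly ambiguous values of $N''(D'',u_0)$ in case (c) of Theorem \ref{New Rogers: congruence} do not occur among the surviving terms (they have $u_0 = 1$, forcing the relevant $N_D(\vl,t)$ and $N(D,u)$ to be $1$ and the two displayed expressions for $N''$ to coincide). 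Once this identification is in place, the count $\prod_i(k_i-1)!!$ and the matching of powers of $\phi(d)$ are immediate, and the odd case follows because a set of odd cardinality has no perfect matching. I would also need to invoke the standard fact (as in \cite{StSo}) that convergence of all joint moments to those of a product of independent $\mathcal N(0, c_i)$ distributions — which are determined by their moments — yields the proposition.
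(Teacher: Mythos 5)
Your proposal is correct and follows essentially the same route as the paper: expand via the new moment formulae (Theorems \ref{New Rogers: affine} and \ref{New Rogers: congruence}), discard the $u\ge 2$ / large-entry / multiple-nonzero-row matrices using Lemma \ref{lemestimatesfromrogers}, use the disjointness of the $S_{i,d}$ to force a block-diagonal structure, identify the surviving matrices with perfect matchings within each block (giving $\prod_i (k_i-1)!!$ and the vanishing in the odd case), and cancel the $2^{k_i/2}$ sign count against the $(2\phi(d))^{k_i/2}$ normalization when $q=2$. Your observations about type (b)/(c) matrices and the values $N''(D'',u_0)$ match how the paper disposes of them (they land in the negligible classes, and Corollary \ref{Lemma for application} gives weight $1$ for the survivors).
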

\begin{proof} 
Let $k=k_1+\cdots+k_\iota$ and consider $d> \lfloor k^2/4\rfloor +3$. 
For each $d\in \NN$ and $1\le i\le \iota$, let $f_{i,d}$ be the indicator function of $S_{i,d}$ and define 
$$F_{i,d}(\Lambda)=\widehat{f_{i,d}}(\Lambda) -\int_{\RR^d} f_{i,d} \d \vv=\widehat{f_{i,d}}(\Lambda)- c_i\phi(d), \;\Lambda \in Y.$$
We will use Theorem~\ref{New Rogers: affine} and Theorem~\ref{New Rogers: congruence}.

Recall that we can divide 
$\bigcup_{1\le n \le k}
\bigcup_{u \in \NN}\mathfrak D^k_{n,u}$
as the union of $\mathfrak R^k_{1}$, $\mathfrak R^k_{2}$ and $\mathfrak M^k$, where
\[\begin{split}
%%%
\mathfrak R^k_{1}&=
\bigcup_{1\le n \le k-1}\left(\left(\bigcup_{u\ge 2} \mathfrak D^k_{n,u}\right) \cup
\left\{D=(d_{ij}) \in \mathfrak D^k_{n,1} : \exists |d_{ij}| \ge 2 \right\}\right);\\
%%%
\mathfrak R^k_{2}&=
\left\{D\in \Big(\bigcup_{1\le n \le k-1}\mathfrak D^k_{n,1}\Big) - \mathfrak R^k_{1} : \begin{array}{l}
\text{$\exists$ column such that}\\
\text{at least two entries are nonzero}\end{array}\right\};\\
%%%
\mathfrak M^k&=
\Big(\bigcup_{1\le n\le k-1}\mathfrak D^k_{n,1}\Big) - \left(\mathfrak R^k_{1} \cup \mathfrak R^k_{2}\right).
\end{split}\]

\vspace{0.15in}
%%%%%%%%%%%%%
\noindent (i) The space $Y$:

By Theorem~\ref{New Rogers: affine} and Theorem~\ref{lemestimatesfromrogers}, one can deduce that
\begin{equation}\label{eq 2: SS Proposition 4.1}
\begin{split}
&\EE \left(\prod_{i=1}^\iota (Z^1_{i,d})^{k_i}\right)
=\frac 1 {\phi(d)^{k/2}} \int_{Y} {\prod_{i=1}^{\iota} \widehat F_{i,d}^{k_i}} (\Lambda)\d\mu_Y(\Lambda)\\
&=\frac 1 {\phi(d)^{k/2}}\sum_{n=1}^{k-1}
\sum_{\scriptsize \begin{array}{c}
D'' \in\\
 \mathfrak S^k_{n,1}\cap \mathfrak M^k\end{array}}
\int_{(\RR^d)^n} \left(\prod_{i=1}^{\iota} f_{i,d}^{k_i}\right) \left(D'' \left(\begin{array}{c} \vw_1 \\ \vdots \\ \vw_n \end{array}\right)\right)\d\vw_1 \cdots \d\vw_n \\
&\hspace{0.2in}+ O\left(\left(\frac {\sqrt{3}} 2\right)^d \phi(d)^{k/2-1}\right).
\end{split}\end{equation}
Notice that if $D'' \in \mathfrak S^k_{n,1}\cap \mathfrak M^k$, then $D''$ is of type (a) in Theorem~\ref{New Rogers: affine}. Hence, for each column of $D''$, there are at least two nonzero entries and for each row of $D''$, there is exactly one nonzero entry.
Moreover, as mentioned in Remark~\ref{rementryonly1}, entries of $D''$ is $\{0,1\}$.

%%%%%
We first claim that $D''$ for which the inner integral above is nontrivial is the block diagonal matrix of the form
\[
\left(\begin{array}{cccc}
D''_{k_1, n_1} & & & \\
& D''_{k_2, n_2} & & \\
& & \ddots & \\
& & & D''_{k_\iota, n_\iota} 
\end{array}\right),
\]
where $n_1+\cdots+n_\iota=n$ and $n_i\ge 1$ for each $1\le i\le \iota$. Moreover, each $D''_{k_i,n_i} \in \mathfrak S^{k_i}_{n_i,1}\cap \mathfrak M^{k_i}$.
Indeed, since the set $\{S_{i,n}\}_{1\le i\le \iota}$ is mutually disjoint, for each column, it is only possible that nontrivial entries are located between the $\left((\sum_{\ell=1}^{i-1} k_i)+1\right)$-th row and the $\left(\sum_{\ell=1}^{i} k_i\right)$-th row for some $1\le i \le \iota$, in other words, nontrivial entries are concentrated in rows which correspond to $f_i$.
The fact that $D''$ is a block diagonal matrix comes from that $D''\in \mathfrak D^k_{n,u}$, especially, from the first property of Notation~\ref{notation} (2). 

It is not hard to show that each $D''_{k_i,n_i}$ is in $\mathfrak S^{k_i}_{n_i,1} \cap \mathfrak M^{k_i}$ from the fact that $D'' \in \mathfrak S^k_{n,1}\cap \mathfrak M^k$.
Hence the main term in \eqref{eq 2: SS Proposition 4.1} is
\begin{equation}\label{eq 3: SS Proposition 4.1}
\prod_{i=1}^\iota \frac 1 {\phi(d)^{k_i/2}} 
\hspace{-0.05in}\sum_{n_i=1}^{\lfloor k_i/2\rfloor}
\hspace{-0.15in}\sum_{\scriptsize \begin{array}{c}
D''_{k_i,n_i} \hspace{-0.05in}\in\\
\mathfrak S^{k_i}_{n_i,1} \cap \mathfrak M^{k_i}\end{array}}
\hspace{-0.2in}
\int_{(\RR^d)^{n_i}} f_i^{k_i}\left(D''_{k_i,n_i}\left(\begin{array}{c} \vw_1 \\ \vdots \\ \vw_{n_i}\end{array}\right)\right) \d\vw_1 \cdots \d\vw_{n_i}.
\end{equation}
%%%
The next claim is that for each $i$, there is a one-to-one correspondence between $\mathfrak S^{k_i}_{n_i,1} \cap \mathfrak M^{k_i}$ and the set of partitions $\mathcal P=\{P_1, \ldots, P_{n_i}\}$ of $\{1, \ldots, k_i\}$ such that
\[
|\mathcal P|=n_i 
\text{ and }
|P_\ell|\ge 2 \text{ for } 1\le \ell \le n_i. 
\] 
Let $\mathcal P$ be such a partition.
Reordering if necessary, we may assume that $\min P_1 < \ldots < \min P_{n_i}$. The corresponding element in $\mathfrak S^{k_i}_{n_i,1} \cap \mathfrak M^{k_i}$ is 
\begin{equation}\label{eq 1: SS Proposition 4.1}
\left[D''_{k_i,n_i}\right]_{\ell j}=\left\{\begin{array}{cl}
1, &\text{if } \ell \in P_j;\\
0, &\text{otherwise.}
\end{array}\right.
\end{equation}
It is obvious that from the first property of Notation~\ref{notation} (2) and the definition of $\mathfrak M^{k_i}$, any element in $\mathfrak S^{k_i}_{n_i,1} \cap \mathfrak M^{k_i}$ is a matrix of the form \eqref{eq 1: SS Proposition 4.1} for some partition $\{P_1, \ldots, P_{n_i}\}$ of $\{1, \ldots, k_i\}$.

%%%
Let $N(k_i,n_i)$ be the number of such partitions. If $n_i< k_i/2$, since $\lim_{d\rightarrow \infty} \phi(d)=\infty$,
\begin{equation}\label{eq 4: SS Proposition 4.1}
\begin{split}
&\frac 1 {\phi(d)^{k_i/2}} 
\sum_{\scriptsize \begin{array}{c}
D''_{k_i,n_i}\hspace{-0.05in}\in\\
\mathcal S^{k_i}_{n_i,1}\cap \mathfrak M^{k_i}\end{array}}
\int_{(\RR^d)^{n_i}} F_i^{k_i} \left(D''_{k_i,n_i}\left(\begin{array}{c}
\vw_1 \\ \vdots \\ \vw_{n_i}\end{array}\right)\right) \d\vw_1 \cdots \d\vw_{n_i}\\
&\hspace{1in}\le \frac {c_i^{n_i}N(k_i,n_i)} {\phi(d)^{k_i/2-n_i}} 
\longrightarrow 0\;\text{as }d \rightarrow \infty.
\end{split}\end{equation}

If $n_i=k_i/2$, by the induction on $k_i/2$, one can show that
\begin{equation}\label{eq 5: SS Proposition 4.1}\begin{split}
&\frac 1 {\phi(d)^{k_i/2}} 
\sum_{\scriptsize \begin{array}{c}
D''_{k_i,n_i}\hspace{-0.05in}\in \\
\mathcal S^{k_i}_{n_i,1}\cap \mathfrak M^{k_i}\end{array}}
\int_{(\RR^d)^{n_i}} F_i^{k_i} \left(D''_{k_i,n_i}\left(\begin{array}{c}
\vw_1 \\ \vdots \\ \vw_{n_i}\end{array}\right)\right) \d\vw_1 \cdots \d\vw_{n_i}\\
&\hspace{1in}=c_i^{k_i/2} N(k_i, k_i/2)= c_i^{k_i/2} (k_i-1)!!.
\end{split}\end{equation}

The result follows from \eqref{eq 3: SS Proposition 4.1}, \eqref{eq 4: SS Proposition 4.1} and \eqref{eq 5: SS Proposition 4.1}.

\vspace{0.15in}
%%%%%%%%%%%%%
\noindent (ii) The space $Y_{\vp/q}$:

The proof is similar to that of (i), where we use Theorem~\ref{New Rogers: congruence}, Lemma~\ref{lemestimatesfromrogers}.
One can check that $D''\in \mathfrak T^k_{n,1}\cap \mathfrak M^k$ is of type (a) in Theorem~\ref{New Rogers: congruence}.

One difference from the affine case is when $q=2$, $D''\in \mathfrak T^k_{n,1}\cap \mathfrak M^k$, which permits to have $-1$ as its entries. More precisely, the rows corresponding to $I_{D''}^c$ can have $\pm 1$ as their nonzero entries. 

It follows that
\[\begin{split}
&\lim_{d\rightarrow \infty}\EE\left(\prod_{i=1}^\iota Z^{k_i}_{i,d}\right)=\lim_{d\rightarrow \infty}
\prod_{i=1}^\iota \frac 1 {(2\phi(d))^{k_i/2}}\times\\
&\hspace{0.3in} \sum_{n_i=1}^{\lfloor k_i/2 \rfloor}\sum_{\scriptsize \begin{array}{c}
D''_{k_i,n_i}\in\\
\mathfrak T^{k_i}_{n_i,1}\cap \mathfrak M^{k_i}\end{array}} 
\int_{(\RR^d)^{n_i}} f_i^{k_i}\left(D''_{k_i,n_i} \left(\begin{array}{c} \vw_1 \\ \vdots \\ \vw_{n_i}\end{array}\right)\right)\d\vw_1 \cdots \d\vw_{n_i}.
\end{split}\]

As in the affine case, the limit is nontrivial only if all $k_i$'s are even and is determined by summation over $\mathfrak T^{k_i}_{k_i/2,1}\cap \mathfrak M^{k_i}$. Hence if $q=2$, since $\# I_{D''_{k_i,k_i/2}}^c=k_i/2$, the number $\#\left(\mathfrak T^{k_i}_{k_i/2,1}\cap \mathfrak M^{k_i}\right)$ is $2^{k_i/2}N(k_i,k_i/2)$. Therefore

\[\begin{split}
&\prod_{i=1}^\iota\frac 1 {(2\phi(d))^{k_i/2}} 
\sum_{\scriptsize \begin{array}{c}
D''_{k_i,n_i}\hspace{-0.05in}\in \\
\mathfrak T^{k_i}_{n_i,1}\cap \mathfrak M^{k_i}\end{array}}
\int_{(\RR^d)^{n_i}} F_i^{k_i} \left(D''_{k_i,n_i}\left(\begin{array}{c}
\vw_1 \\ \vdots \\ \vw_{n_i}\end{array}\right)\right) \d\vw_1 \cdots \d\vw_{n_i}\\
&=\prod_{i=1}^\iota \frac 1 {(2\phi(d))^{k_i/2}}
(c_i\phi(d))^{k_i/2} \cdot 2^{k_i/2}N(k_i, k_i/2)
=\prod_{i=1}^\iota c_i^{k_i/2} (k_i-1)!!.
\end{split}\]
\end{proof}

%%%%%%%%%%%%%%%%%%%%%%%%%%%%%%%%%%%%%%%%%%%%%%%%%%%%%%%%%%%%%%%%%%%%%%
\begin{proof}[Proofs of Theorem~\ref{Strom-Sod main thm: affine} and ~\ref{Strom-Sod main thm: congruence}]

As a corollary of Proposition~\ref{SS Proposition 4.1} with $\iota=1$, for $\ds=1$ and $\vp/q$, it follows that for any $k\in \NN$,
\[
\lim_{d\rightarrow \infty} \EE\left( (Z^\ds_d)^k\right)
=\left\{\begin{array}{cl}
(k-1)!!, &\text{if } k\in 2\NN;\\
0, &\text{otherwise,}
\end{array}\right.
\]
which shows that $Z^\ds_d\rightarrow \mathcal N(0,1)$ as $d\rightarrow \infty$ in distribution by the method of moments.
\end{proof}

%%%%%%%%%%%%%%%%%%%%%%%%%%%%%%%%%%%%%%%%%%%%%%%%%%%%%%%%%%%%%%%%%%%%%%
\begin{proof}[Proofs of Theorem~\ref{Strom-Sod Brownian: affine} and ~\ref{Strom-Sod Brownian: congruence}]
For any $0<t_1<\ldots<t_\iota<1$, set
\[
S_{i,d}=(t_i)^{1/d}S_d - (t_{i-1})^{1/d}S_d,\; 2\le i\le \iota
\]
and $S_{1,d}=(t_1)^{1/d}S_d$. Since $S_d$ is star-shaped, all $S_{i,d}$'s are mutually disjoint.
By Proposition~\ref{SS Proposition 4.1}, for $\ds=1$ and $\vp/q$, the random vector
\[
\left(Z^\ds_d (t_1), Z^\ds_d(t_2)- Z^\ds_d(t_1), \ldots, Z^\ds_d(t_\iota)-Z^\ds_d(t_{\iota-1})\right)
\]
converges weakly as finite-dimensional distributions to
\[
\left(\mathcal N(0,t_1), \mathcal N(0, t_2-t_1), \ldots, \mathcal N(0, t_\iota-t_{\iota-1})\right)
\]
by the method of moments.

\vspace{0.1in}
%%%%%%%%%%%%%%
The rest of the proof is to show the tightness.
As in the proof of Theorem 1.6 in \cite{StSo}, by \cite[Theorem 13.3 and (13.14)]{Bil}, it suffices to show that for any $0\le r \le s \le t \le 1$, 
\begin{equation*}\label{eq: tightness}
\EE\left((Z^\ds_d(s)-Z^\ds_d(r))^2(Z^\ds_d(t)-Z^\ds_d(s))^2\right)
\ll (\sqrt t - \sqrt r)^2.
\end{equation*}

We omit the proof since it is almost the same as in the proof of \cite[Theorem 1.6]{StSo} (see especially equations from (4.4) to (4.9)), where the arguments are appliable to a star-shaped set $S_d\subseteq \RR^d$ centered at the origin without any modification.
Here, we want to remark that we need the argument in \cite{StSo} only for the congruence case. 
For the affine case, since $\bigcup_{u\in \NN} \mathfrak S^4_{1,u} \cap (\mathfrak R_1 \cup \mathfrak R_2)=\emptyset$, it deduced directly from (4.5) in \cite{StSo} that
\[\begin{split}
&\EE\left((Z_d(s)-Z_d(r))^2(Z_d(t)-Z_d(s))^2\right)\\
&\ll (t-r)^2 + \max\left(\left(\frac 3 4\right)^{n/2}(t-r)^2, \left(\frac 3 4\right)^{n/2}(t-r)^3 \phi(d)\right)\\
&\ll (t-r)^2 < (\sqrt t - \sqrt r)^2.
\end{split}\] 
\end{proof}

\subsection*{Conflict of interest}
On behalf of all authors, the corresponding author states that there is no conflict of interest.

% F}O}L}D

% Bibliography
%%%%%%%%%%% Biblio %%%%%%%%%%%%%%%% F{O{L{D
% F}O}L}D


\begin{thebibliography}{1}
\bibitem{AGY} M. Alam, A. Ghosh, and S. Yu,  \emph{Quantitative Diophantine approximation with congruence conditions}, J. Th\'{e}or. Nombres Bordeaux 33 (2021), no. 1, 261--271.
\bibitem{Ath} J. S. Athreya, \emph{Random affine lattices}, Contemp. Math. 639 (2015): 160--74.
\bibitem{Bil}  P. Billingsley, \emph{Convergence of Probability Measures}, second edition, Wiley Series in Probability and
Statistics, John Wiley \& Sons Inc., New York, 1999.
\bibitem{EMV2015} D. El-Baz, J. Marklof and I. Vinogradov,
\emph{The distribution of directions in an affine lattice: two-point correlations and mixed moments},
Int. Math. Res. Not. IMRN 2015, no. 5, 1371--1400.
\bibitem{GH} A. Ghosh and J. Han, \emph{Values of inhomogeneous forms at S-integral points}, Mathematika 68 (2022), no. 2, 565--593.
\bibitem{GKY2020}
A. Ghosh, D. Kelmer, and S. Yu, 
\emph{Effective density for inhomogeneous quadratic forms I: Generic forms and fixed shifts,} Int. Math. Res. Not. IMRN 2022, no. 6, 4682--4719.
\bibitem{HLM2017} 
J. Han, S. Lim, and K. Mallahi-Karai, 
\emph{Asymptotic distribution of values of isotropic quadratic forms at $S$-integral points,} 
J. Mod. Dyn. 11 (2017), 501--550.
\bibitem{Han} J. Han, \emph{Rogers' mean value theorem for S-arithmetic Siegel transform and applications to the geometry of numbers}, preprint, arXiv: 1910.01824.
%
\bibitem{Kim} S. Kim, \emph{Random lattice vectors in a set of size O(n)},
Int. Math. Res. Not. IMRN 2020, no. 5, 1385--1416.
\bibitem{Mar2000}
J. Marklof, 
\emph{The $n$-point correlations between values of a linear form,}
Ergodic Theory Dynam. Systems 20 (2000), 1127--1172.
%
\bibitem{MS2010}
J. Marklof and A. Str\"{o}mbergsson, 
\emph{The distribution of free path lengths in the periodic Lorentz gas and related lattice point problems,}
Ann. of Math. (2) 172 (2010), no. 3, 1949--2033.
%
\bibitem{Pra1989}
G. Prasad, 
\emph{Volumes of $S$-arithmetic quotients of semi-simple groups,}
With an appendix by Moshe Jarden and the author.
Inst. Hautes \'{E}tudes Sci. Publ. Math. No. 69 (1989), 91--117.
%


%%
\bibitem{Rogers55} C. Rogers, \emph{Mean values over the space of lattices}, Acta Math. 94 (1955), 249--287.
%
\bibitem{Rogers55-2} C. Rogers, \emph{The moments of the number of points of a lattice in a bounded set}, Phil. Trans. Roy. Soc. London. Ser. A. 248 (1955), 225--251
\bibitem{Rogers56} C. Rogers, \emph{Two integral inequalities}, J. London Math. Soc. 31 (1956), 235--238.
%
\bibitem{Rogers56-2} C. Rogers, \emph{The number of lattice points in a set}, Proc. London Math. Soc. (3) 6 (1956), 305--320.
%
%\bibitem{Sar} O. Sargent, \emph{Equidistribution of values of linear forms on quadratic surfaces}, Algebra Number Theory, Volume 8, Number 4 (2014), 895-932.
%
\bibitem{Sch58} W. M. Schmidt, \emph{On the convergence of mean values over lattices}, Canad. J. Math. 10 (1958), 103--110.

\bibitem{Sch59} W. M. Schmidt, \emph{Masstheorie in der Geometrie der Zahlen}, Acta Math. 102, 159--224 (1959).

%
\bibitem{Sch60} W. M. Schmidt, \emph{A metrical theorem in geometry of numbers}, Trans. Amer. Math. Soc. 95 (1960), 516--529.
%\bibitem{Sch71} W. Schmidt, \emph{Approximation to algebraic numbers,} Enseignement Math. (2) 17 (1971), 187-253.
%

\bibitem{Sie45} C. Siegel, \emph{A mean value theorem in geometry of numbers}, Ann. Math. 46 (1945), 340--347.

\bibitem{Sie1989}
C. Siegel, 
\emph{Lectures on the geometry of numbers,}
Springer-Verlag Berlin Heidelberg GmbH, 1989. x+160 pp.


\bibitem{StSo} A. Str\"{o}mbergsson and A. S\"{o}dergren, \emph{On the generalized circle problem for a random lattice in large dimension}, Adv. Math. 345 (2019), 1042--1074.
\bibitem{Sod}  A. S\"{o}dergren, \emph{On the Poisson distribution of lengths of lattice vectors in a random lattice}, Math.
Z. 269 (3--4) (2011) 945--954.



 \end{thebibliography}
\end{document}